\newtheorem{theorem}{Theorem}%[section]
\newtheorem{lemma}[theorem]{Lemma}
\newtheorem{proposition}[theorem]{Proposition}
\newtheorem{corollary}[theorem]{Corollary}
\newtheorem{remark}[theorem]{Remark}
\newtheorem*{thma}{Theorem A}
\newtheorem*{thmb}{Theorem B}
\newcommand{\R}{{\mathbb R}}
\newcommand{\C}{{\mathbb C}}
\newcommand{\D}{{\mathbb D}}
\newcommand{\inr}{\int_{\R}}
\newcommand{\inc}{\int_{\C}}
\DeclareMathOperator{\re}{{\rm Re}\,}
\DeclareMathOperator{\im}{{\rm Im}\,}
\DeclareMathOperator{\tr}{{\rm tr}\,}
\begin{document}

\title{Canonical integral operators on the Fock space II}

\author{Xingtang Dong}
\address{School of Mathematics and KL-AAGDM, Tianjin University, Tianjin 300354, China.}
\email{dongxingtang@163.com; dongxingtang@tju.edu.cn}

\author{Kehe Zhu}
\address{Department of Mathematics and Statistics, SUNY, Albany, NY 12222, USA.}
%and School of Mathematics and Statistics, Central South University, Changsha,
%Hunan 410083, China.}
\email{kzhu@math.albany.edu}

\subjclass[2020]{Primary 30H20; Secondary 47G10.}

\keywords{Fock spaces, canonical integral operators, Berezin transforms,
Schatten classes, singular values, trace formula.}

\thanks{\noindent Research supported by NNSF of China (Grant Numbers 
12271396 and 12271328).}

\begin{abstract}
In \cite{DZ3} we introduced and studied a two-parameter family of integral
operators $T^{(s,t)}$ on the Fock space $F^2$ of the complex plane. Under
the inverse Bargmann transform, these operators include the classical {\it linear
canonical transforms} in mathematical physics as special cases, so we called
$T^{(s,t)}$ {\it canonical linear operators} on the Fock space. In this paper
we continue the study of these operators. We show that when a canonical linear
operator $T^{(s,t)}$ is compact, it actually belongs to the Schatten class $S_p$
for all $p>0$. In this case, we find all singular values, determine the $S_p$ 
norm, and obtain a trace formula for $T^{(s,t)}$. We also show that the 
boundedness (and a natural version of compactness) of $T^{(s,t)}$ on $F^p$ 
for any given $p\in(0,\infty]$ is equivalent to the boundedness (and 
compactness) of $T^{(s,t)}$ on $F^2$. Our analysis is based on estimates 
and computations with the integral kernel of $T^{(s,t)}$, which also yield 
some interesting results about the Berezin transform and the bivariate 
Berezin transform of $T^{(s,t)}$.
\end{abstract}

\maketitle

\section{Introduction}

Let $\C$ be the complex plane and $H(\C)$ be the linear space of all entire
functions on $\C$. For $0<p\leq\infty$ we use $F^p$ to denote the space of all
$f\in H(\C)$ such that the function $f(z)e^{-|z|^2/2}$ belongs to
$L^p(\C, dA)$, where $dA$ is ordinary area measure on the complex
plane. For $f\in F^p$ with $0<p< \infty$ we write
$$\|f\|_p^p=\frac{p}{2\pi}\inc \left|f(z)e^{-\frac{|z|^2}{2}}\right|^p\,dA(z).$$
For $f\in F^\infty$ we define
$$\|f\|_\infty=\sup\left\{|f(z)|e^{-\frac{|z|^2}{2}}: z\in\C\right\}.$$
The spaces $F^p$ are called Fock spaces.

When $p=2$, $F^2$ is a Hilbert space whose inner product is given by
$$\langle f,g\rangle=\inc f(z)\overline{g(z)}\,d\lambda(z),$$
where
$$d\lambda(z)=\frac1\pi e^{-{|z|^2}}\,dA(z)$$
is the standard Gaussian measure on $\C$. Furthermore, $F^2$ is a reproducing
kernel Hilbert space whose kernel function is $K(z,w)=e^{z\overline w}$.
See \cite{Zhu1} for an introduction to the theory of Fock spaces.

Recently in \cite{DZ3} we introduced and studied a two-parameter family of
integral operators on $L^2(\C, d\lambda)$ and on $F^2$. More specifically,
for any $(s,t)\in\C^2$ with $s\not=0$, we define
\begin{equation}\label{operator_def}
T^{(s,t)}f(z)=\inc K^{(s,t)}(z,w)f(w)\,d\lambda(w),
\end{equation}
where
$$K^{(s,t)}(z,w)=\frac1{\sqrt s}\,\exp\left[\frac{tz^2-\overline{tw^2}
+2z\overline w}{2s}\right].$$
Here and throughout the paper, the complex square root is defined as follows:
$$\sqrt z=\sqrt{|z|}\,e^{i\theta/2},\qquad z=|z|e^{i\theta},
\qquad \theta\in(-\pi, \pi].$$
Note that $K^{(1,0)}(z,w)=e^{z\overline{w}}$ is the reproducing kernel
of $F^2$, and $T^{(1,0)}=P$ is the orthogonal projection from
$L^2(\C, d\lambda)$ onto $F^2$.

One of the main results of \cite{DZ3} is that $T^{(s,t)}$ is bounded on $F^2$
if and only if $|s|^2\ge|t|^2+1$. Furthermore, $|s|^2=|t|^2+1$ if and only
if $T^{(s,t)}$ is unitary, and $|s|^2>|t|^2+1$ if and only if $T^{(s,t)}$ is
compact. Thus when $T^{(s,t)}$ is bounded, it is either unitary or compact.
Most of \cite{DZ3} was devoted to the study of $T^{(s,t)}$ when
they are unitary. In particular, it was shown in \cite{DZ3} that,  when 
$|s|^2=|t|^2+1$, the operators $T^{(s,t)}$ on $F^2$ are unitarily 
equivalent to the classical {\it linear canonical 
transforms} on $L^2(\R, dx)$, which are widely used in mathematical 
physics and engineering applications (see \cite{HKOS} for example). 
For this reason we called the two-parameter family of integral operators 
$T^{(s,t)}$ {\it canonical integral operators} on the Fock space.

In this paper, we continue this program and study $T^{(s,t)}$ when they 
are compact. The most important properties of a compact operator 
are its singular values, which will be our main focus here.
The main results of this paper are Theorems A and B below.

\begin{thma}
Let $p>0$ and $(s, t)\in\C\times\C$ with $|s|^2>|t|^2+1$. Then the
compact operator $T^{(s,t)}: F^2\to F^2$ belongs to the Schatten class
$S_p$. Moreover,
\begin{enumerate}
\item[(a)] The decreasing sequence $\left\{\mu_n\right\}_{n=0}^\infty$,
where
$$\mu_n=\left[\frac{2|s|}{|s|^2-|t|^2-1+
\sqrt{(|s|^2-|t|^2-1)^2+4|s|^2}}\right]^{n+(1/2)},$$
is the sequence of all singular values for $T^{(s,t)}$.
\item[(b)] The operator norm and Schatten $p$-norm of $T^{(s,t)}$ 
are given by
$$\|T^{(s,t)}\|=\mu_0,\qquad \|T^{(s,t)}\|_{S^p}=
\mu_0\,\left(1-\mu_0^{2p}\right)^{-(1/p)}.$$ 
%The Schatten $p$-norm of $T^{(s,t)}$ is given by
%$$\|T^{(s,t)}\|_{S_p}^p=\frac{\left[2|s|
%\left(|s|^2-|t|^2-1+
%\sqrt{(|s|^2-|t|^2-1)^2+4|s|^2}\right)\right]^{p/2}}
%{\left(|s|^2-|t|^2-1+
%\sqrt{(|s|^2-|t|^2-1)^2+4|s|^2}\right)^p-\left(2|s|\right)^p }.$$
\item[(c)] The trace of $T^{(s,t)}$ is given by
$$\tr\left(T^{(s,t)}\right)=
\frac{1}{\sqrt{s}}\sqrt{\frac{s^2}{(s-1)^2+|t|^2}}.$$
\end{enumerate}
\end{thma}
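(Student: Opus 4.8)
The plan is to reduce everything to the spectral analysis of the positive compact operator $T^*T$, where $T=T^{(s,t)}$, since the singular values of $T$ are exactly the square roots of the eigenvalues of $T^*T$. First I would record that the adjoint $T^*$ is again an integral operator, with kernel $\overline{K^{(s,t)}(w,z)}$, and then compute the kernel of $T^*T$ as the superposition
\[
L(z,w)=\int_\C \overline{K^{(s,t)}(u,z)}\,K^{(s,t)}(u,w)\,d\lambda(u).
\]
This is a Gaussian integral in $u$ of the form $\frac1\pi\int e^{Pu^2+Q\bar u^2+Bu+C\bar u-|u|^2}\,dA(u)$, which evaluates in closed form to $(1-4PQ)^{-1/2}\exp[(QB^2+PC^2+BC)/(1-4PQ)]$. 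Carrying this out, I expect $L(z,w)=c_0\exp(\alpha z^2+\bar\alpha\bar w^2+\beta z\bar w)$ with $\beta=1/(|s|^2-|t|^2)$ real and positive, $c_0=(|s|^2-|t|^2)^{-1/2}$, and an explicit $\alpha$; crucially the coefficients of $z^2$ and $\bar w^2$ are complex conjugates of each other (rather than negatives), reflecting that $T^*T$ is Hermitian and positive. Thus $T^*T$ is a Mehler-type (oscillator-semigroup) operator rather than another member of the canonical family.

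The heart of the proof is to diagonalize this Hermitian Gaussian operator. I would first look for a ground-state eigenfunction of the form $g_0(z)=e^{\lambda z^2}$: applying the same Gaussian-integral formula shows that $T^*T g_0$ is again a constant times $e^{(\cdots)z^2}$, and demanding that the new coefficient of $z^2$ equal $\lambda$ produces a scalar quadratic equation for $\lambda$. Its admissible root $\lambda_0$ (with $|\lambda_0|<1/2$, so that $g_0\in F^2$) gives the largest eigenvalue $\mu_0^2=c_0(1-4\lambda_0\bar\alpha)^{-1/2}$. To capture all eigenvalues I would produce the full orthonormal system of excited states, the squeezed analogues of the monomials $z^n e^{\lambda_0 z^2}$, and show their eigenvalues form the geometric sequence $\mu_0^2\rho^{2n}$; equivalently, and perhaps more cleanly, I would exhibit a unitary squeezing operator from the canonical family $\{T^{(s_0,t_0)}:|s_0|^2=|t_0|^2+1\}$ that conjugates $T^*T$ into the diagonal operator $T^{(1/\rho^2,0)}$, whose eigenvalues on the monomial basis are visibly $(1/\rho^2)^{-n-1/2}=\rho^{2n+1}$. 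Either route reduces the squeezing factor $\rho$ to the positive root of $|s|\rho^2+(|s|^2-|t|^2-1)\rho-|s|=0$, which rationalizes to the closed form in the statement, so that the singular values are $\mu_n=\rho^{n+1/2}$; this establishes (a). The main obstacle will be this diagonalization step, specifically proving completeness of the eigensystem so that every singular value is accounted for (not merely estimating the top one), and keeping careful track of the branch of the complex square root throughout.

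Part (b) is then a routine geometric summation: $\|T\|=\mu_0$ because $\mu_0$ is the largest singular value, while $\|T\|_{S^p}^p=\sum_{n=0}^\infty\mu_n^p=\sum_{n=0}^\infty\rho^{(n+1/2)p}=\mu_0^p(1-\mu_0^{2p})^{-1}$, using $\mu_0=\rho^{1/2}$. For the trace in (c) I would invoke the diagonal trace formula for trace-class integral operators on $F^2$, namely $\tr(T)=\int_\C K^{(s,t)}(z,z)\,d\lambda(z)$, which is legitimate here since $T\in S_1$ by part (a) and the kernel is continuous; this reduces to the Gaussian integral $\frac1\pi\int e^{Pz^2+Q\bar z^2+R|z|^2}\,dA(z)=(R^2-4PQ)^{-1/2}$ with $P=t/(2s)$, $Q=-\bar t/(2s)$, and $R=(1-s)/s$, yielding $R^2-4PQ=((s-1)^2+|t|^2)/s^2$ and hence the stated value $\tr(T)=s^{-1/2}\sqrt{s^2/((s-1)^2+|t|^2)}$.
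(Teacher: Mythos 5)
Your proposal is correct in outline, and its computational core coincides with the paper's: the paper also diagonalizes $|T^{(s,t)}|^2$, whose kernel (Lemma~\ref{lemma|T|}) has exactly the Mehler form you predict, with conjugate coefficients on $z^2$ and $\overline{w}^2$ and real $\beta=1/(|s|^2-|t|^2)$; it also finds the ground state by a Gaussian ansatz $e^{\gamma z^2/2}$, with $\gamma$ solving $\overline{st}\gamma^2+(|s|^2+|t|^2+1)\gamma+st=0$ (Lemma~\ref{existenceofgamma}) --- your quadratic $|s|\rho^2+(|s|^2-|t|^2-1)\rho-|s|=0$ is precisely the equation satisfied by $\rho=|s|/(|s|^2+\gamma\overline{st})$, by \eqref{eqgammaminus}; part (b) is the same geometric summation; and part (c) is the same diagonal Gaussian integral (the paper's route through Proposition 3.3 of \cite{Zhu1} and Proposition~\ref{propBT} is exactly $\int_\C K^{(s,t)}(z,z)\,d\lambda(z)$), with the same determinant $((s-1)^2+|t|^2)/s^2$, though the paper takes care to verify absolute convergence by checking positive definiteness of the real part of the quadratic form, a point you should not wave through since $s$ is complex. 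The genuine divergence is the completeness step, which you rightly flag as the crux. The paper neither proves density of an eigensystem nor constructs a conjugating unitary: it produces eigenfunctions $Q_n(z)e^{\gamma z^2/2}$ via Hermite-polynomial integral identities imported from \cite{DZ3} (Lemma~\ref{lempolynom} and Theorem~\ref{thm|T|2}), and then, in Theorem~\ref{16}, rules out any extra singular values or multiplicities by a purely numerical trick: the geometric series $\sum_n\mu_n^2=|s|/(|s|^2-|t|^2-1)$ already equals the Hilbert--Schmidt norm $\|T^{(s,t)}\|_{S_2}^2$ computed independently in Section 3 from Berezin-transform estimates (Corollary~\ref{corS2}), and equality forces $\{\mu_n\}$ to exhaust the singular values, each with multiplicity one. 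Your two alternatives --- a complete orthonormal system of squeezed states, or conjugation of $T^*T$ onto $T^{(1/\rho^2,0)}$ by a canonical unitary --- are both viable and would make completeness structural rather than numerical, but each requires work your proposal leaves open: the first needs density in $F^2$ of the span of $\{z^ne^{\gamma z^2/2}\}_{n\ge0}$ (true, e.g.\ because the functions $e^{\gamma z^2/2+z\overline{u}}$ generated by that span are multiples of $T^{(s_0,t_0)}K_w$ for a unitary member of the family with $t_0/s_0=\gamma$), and the second needs an explicit composition-formula computation producing $(s_0,t_0)$. What your route would buy is independence from Section 3: once the singular values are known to decay geometrically, membership in every $S_p$ and the trace-class hypothesis needed in (c) come for free, whereas the paper must establish Schatten membership beforehand precisely because its completeness argument feeds on the independently computed $S_2$ norm.
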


For any fixed $w\in\C$ we write
$$K_w^{(s,t)}(z)=K^{(s,t)}(z,w),\qquad z\in\C.$$
It follows from Lemma 1 of \cite{DZ3} that the function $K_w^{(s,t)}$ 
belongs to $F^2$ if and only if $|s|>|t|$. Therefore, for $|s|>|t|$, the 
integral in (\ref{operator_def}) gives rise to an entire function $T^{(s,t)}f$ 
for every $f\in F^2$. In fact, it is not difficult to see that $T^{(s,t)}f$ is a
well-defined entire function for every $f\in F^\infty$ (and hence for every
$f\in F^p$ with $p>0$).

Since $T^{(s,t)}f$ is a well-defined entire function for every $f\in F^p$ 
when $0<p\le\infty$ and $|s|>|t|$, it is natural for us to consider the 
boundedness and ``compactness'' of $T^{(s,t)}$ on $F^p$. There is a 
minor technical issue we must address, that is, the notion of ``compact 
operators'' on the non-Banach spaces $F^p$ for $0<p<1$ and the 
non-reflexive Banach spaces $F^1$ and $F^\infty$. Thus in the rest of 
the paper, for convenience and simplicity of terminology, we will say that 
a bounded linear operator $T: F^p\to F^p$, where $0<p\le\infty$, is 
compact if $\|Tf_n\|_{p}\to0$ (as $n\to\infty$) for every bounded 
sequence $\{f_n\}\subset F^p$ that converges to $0$ uniformly on 
compact subsets of $\C$. When $1<p<\infty$, this definition of 
compactness is equivalent to the usual notion of compactness on the 
Banach space $F^p$.

\begin{thmb}
Suppose $|s|>|t|$ and $0<p\leq\infty$. Then
\begin{enumerate}
\item[(a)] $T^{(s, t)}$ is bounded on $F^p$ if and only if $|s|^2\ge|t|^2+1$;
\item[(b)] $T^{(s,t)}$ is compact on $F^p$ if and only if $|s|^2>|t|^2+1$.
\end{enumerate}
\end{thmb}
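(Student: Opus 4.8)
The plan is to deduce all four implications from the kernel of $T^{(s,t)}$ by isolating a single, $p$-independent quadratic form and then feeding it into standard Gaussian machinery. Passing to the densities $f\mapsto f\,e^{-|z|^2/2}$, the operator $T^{(s,t)}$ on $F^p$ becomes the integral operator on $L^p(\C,dA)$ with kernel $H(z,w)=\frac1{\pi\sqrt{|s|}}\exp[E(z,w)]$, where $E$ is the real quadratic form
$$E(z,w)=\re\frac{tz^2-\overline{t}\,\overline w^2+2z\overline w}{2s}-\frac{|z|^2+|w|^2}{2}$$
on $\C^2\cong\R^4$. The whole argument rests on an elementary linear-algebra computation: for each fixed $z$ the restriction of $E$ to the $w$-variable is negative definite exactly when $|s|>|t|$ (this is why the defining integral converges), while $E$ itself is negative semidefinite exactly when $|s|^2\ge|t|^2+1$ and negative definite exactly when $|s|^2>|t|^2+1$. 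Because this trichotomy does not see $p$, it is the mechanism forcing $F^p$-behaviour to mirror $F^2$-behaviour.

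For part (a), sufficiency: if $|s|^2\ge|t|^2+1$ then $E\le0$. For $1\le p\le\infty$ I would run Schur's test on $H$ with the Gaussian weight $h(z)=e^{c|z|^2}$; since both $H$ and $h$ are Gaussian, the two Schur integrals reduce to completing squares in $w$ and in $z$, and the existence of an exponent $c>0$ that returns the weight with a finite constant is precisely the feasibility condition $E\le0$. For $0<p<1$, where Schur's test is unavailable, I would instead invoke the atomic decomposition of $F^p$ together with the same Gaussian domination of $H$, bounding $\|T^{(s,t)}f\|_p^p$ by a lattice sum via $p$-subadditivity; the decay furnished by $E\le0$ makes this sum converge with a $p$-independent constant. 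For the necessity direction I would test on the normalized reproducing kernels $k_a$, which satisfy $\|k_a\|_p=1$ for every $p$ and $a$. A direct Gaussian computation gives $\|T^{(s,t)}k_a\|_p=C_p\,e^{\varphi(a)}$, where the exponent $\varphi$ is a quadratic form in $a$ that is independent of $p$ (only the constant $C_p$ depends on $p$). Hence $\sup_a\|T^{(s,t)}k_a\|_p<\infty$, which is forced by boundedness on $F^p$, holds if and only if $\varphi\le0$, i.e. if and only if $|s|^2\ge|t|^2+1$. This simultaneously shows that $F^p$-boundedness is equivalent to $F^2$-boundedness. (Note that the Berezin transform alone does not suffice here: it only detects the diagonal values $(T^{(s,t)}K_a)(a)$ and can stay bounded below the true threshold, so the full norm $\|T^{(s,t)}k_a\|_p$ is needed.)

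For part (b), when $|s|^2>|t|^2+1$ the form $E$ is negative definite, so $H(z,w)\le C\,e^{-\delta(|z|^2+|w|^2)}$ for some $\delta>0$. Given a bounded sequence $\{f_n\}\subset F^p$ with $f_n\to0$ uniformly on compacta, I would estimate $|T^{(s,t)}f_n(z)|e^{-|z|^2/2}\le\int_\C H(z,w)\,|f_n(w)|e^{-|w|^2/2}\,dA(w)$ and split the $w$-integral at $|w|=R$: the inner part tends to $0$ by uniform convergence on compacta, and the outer part is made small uniformly in $n$ using the Gaussian decay of $H$ and the uniform bound $|f_n(w)|e^{-|w|^2/2}\le C_p$; taking $L^p$-norms in $z$ (the surviving $z$-Gaussian keeps everything integrable) gives $\|T^{(s,t)}f_n\|_p\to0$, i.e. compactness. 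When $|s|^2=|t|^2+1$ the operator is unitary on $F^2$, and a short computation identifies its adjoint as the canonical operator $T^{(\overline s,-t)}$, whose parameters again satisfy the boundary relation and which is therefore bounded on $F^p$ by part (a). Taking $f_n=k_{a_n}$ with $|a_n|\to\infty$ (bounded in $F^p$ and null uniformly on compacta), the identity $T^{(\overline s,-t)}T^{(s,t)}k_{a_n}=k_{a_n}$ (valid since $k_{a_n}\in F^2$) yields $\|T^{(s,t)}k_{a_n}\|_p\ge\|k_{a_n}\|_p/\|T^{(\overline s,-t)}\|=1/\|T^{(\overline s,-t)}\|>0$, so $T^{(s,t)}$ is not compact on $F^p$.

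The main obstacle I anticipate is the range $0<p<1$ in the sufficiency direction: neither Schur's test nor $F^p$--$F^{p'}$ duality is available, so boundedness must be extracted from the Gaussian kernel bound by hand, and one must verify that the resulting constant stays finite right up to the boundary, so that the transition at $|s|^2=|t|^2+1$ is genuinely captured rather than being lost in a non-sharp estimate. A secondary point demanding care is the bookkeeping that converts the $p$-dependent Schur exponents and the explicit ratio $\|T^{(s,t)}k_a\|_p/\|k_a\|_p$ into the single clean threshold $|s|^2\ge|t|^2+1$, uniformly in $p$; this is exactly where the $p$-independence of the quadratic forms $E$ and $\varphi$ does the real work.
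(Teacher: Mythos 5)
Your proposal is correct in substance and its skeleton largely coincides with the paper's proof of Theorem~\ref{thmTBp}: necessity by testing on the normalized kernels $k_a$, sufficiency for $1\le p\le\infty$ by a Schur-type test with Gaussian weight, sufficiency for $0<p<1$ by atomic decomposition and $p$-subadditivity, and compactness by splitting the integral using Gaussian decay of the weighted kernel. The genuine differences are two. First, the source of the kernel estimate: you reduce everything to the assertion that the quadratic form $E$ on $\R^4$ is negative definite iff $|s|^2>|t|^2+1$ and negative semidefinite iff $|s|^2\ge|t|^2+1$, calling this ``an elementary linear-algebra computation.'' The paper never does this computation; instead it derives the equivalent bound (Theorem~\ref{thmKzw}) operator-theoretically, writing $K^{(s,t)}(z,w)=\langle T^{(s,t)}K_w,K_z\rangle$ (Proposition~\ref{propBT}), evaluating two Gaussian integrals (Lemma~\ref{lemiTK}), and bootstrapping the product of the resulting bounds against the exact modulus formula \eqref{eqnormK}. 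Your trichotomy is true (it follows from Theorem~\ref{thmKzw} by a scaling argument, and conversely implies it), and it is in principle checkable by completing squares twice (a Schur-complement reduction of a $4\times4$ form), but this is precisely where the paper says its estimates are ``highly non-trivial'' and where your proposal asserts rather than executes; note also that at $p=\infty$ the necessity direction needs the supremum version of the computation (a maximization, not a Gaussian integral), which the paper sidesteps with a Weyl-operator lower bound (Proposition~\ref{propKinfty}, Corollary~\ref{corKinftymax}). Second, non-compactness at the boundary $|s|^2=|t|^2+1$: your argument via the inversion identity $T^{(\overline s,-t)}T^{(s,t)}=cI$ on $F^2$, boundedness of $T^{(\overline s,-t)}$ on $F^p$ from part (a), and the resulting lower bound $\|T^{(s,t)}k_a\|_p\ge1/\|T^{(\overline s,-t)}\|$ is genuinely different from, and slicker than, the paper's treatment, which computes $\|T^{(s,t)}k_w\|_p=|s|^{1/p-1/2}$ exactly for $p<\infty$ (Proposition~\ref{propKp}) and needs a special sequence with $stw_n^2=|stw_n^2|$ for $p=\infty$; your version handles all $p$, including $p=\infty$, uniformly. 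Similarly, your compactness argument, resting on the single joint bound $H(z,w)\le Ce^{-\delta(|z|^2+|w|^2)}$ together with $|f_n(w)|e^{-|w|^2/2}\lesssim\|f_n\|_p$, unifies the three cases the paper treats separately (via Proposition~\ref{propsup}, Corollary~\ref{corSKinftymax}, and \eqref{eqTkpless}).

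What each approach buys: the paper's route produces exact norm formulas (Proposition~\ref{propKp}) and sharp Schur constants \eqref{IeqIK} that it reuses elsewhere (Schatten-class computations, trace formula), at the cost of a three-case analysis; yours is more conceptual and $p$-uniform, but it front-loads the entire difficulty of the theorem into the one definiteness computation that the proposal leaves unverified. That computation is the crux, not a footnote — if you write this up, carrying it out explicitly (say, by maximizing $E(\cdot,w)$ over $z$ for fixed $w$ and then analyzing the resulting $2\times2$ form in $w$) is the step that must be done with full care, since everything else in your plan hangs on it.
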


Consequently, the boundedness (and compactness) of $T^{(s,t)}$ on $F^p$
is actually independent of $p$, where $0<p\le\infty$.

\section{Estimates for canonical integral kernels}

For any $w\in\C$ we use $K_w$ to denote the reproducing kernel of $F^2$ 
at $w$. Thus $K_w(z)=K(z,w)=e^{z\overline w}$. The function $k_w=
K_w/\|K_w\|_2$ is called the normalized reproducing kernel of
$F^2$ at $w$. Each $k_w$ is a unit vector in $F^2$; it is also a unit
vector in $F^p$ for any $0<p\le\infty$. See \cite{Zhu1}. We begin with the 
following integral representation for general operators on the Fock space.

\begin{proposition}\label{1}
If $T$ is a bounded linear operator on $F^2$, then there exists a unique
function $H(z,w)$ on $\C\times\C$ with the following properties.
\begin{itemize}
\item[(i)] For fixed $z,w\in\C$ the functions $u\mapsto H(u,w)$
and $u\mapsto\overline{H(z,u)}$ are both in $F^2$.
\item[(ii)] The operator $T$ is represented as
\begin{equation}\label{integral_rep_for_T}
Tf(z)=\inc H(z,w)f(w)\,d\lambda(w),\qquad f\in F^2,\quad z\in\C.
\end{equation}
\end{itemize}
\end{proposition}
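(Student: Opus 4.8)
The plan is to build the kernel directly from the action of $T$ on reproducing kernels, exploiting the fact that $F^2$ is a reproducing kernel Hilbert space, so that $f(z)=\langle f,K_z\rangle$ for every $f\in F^2$ and every $z\in\C$. I would define the candidate kernel by $H(z,w)=(TK_w)(z)$ and note that it has the equivalent description $H(z,w)=\overline{(T^*K_z)(w)}$. This identity is immediate from the definition of the adjoint together with the reproducing property:
$$(T^*K_z)(w)=\langle T^*K_z,K_w\rangle=\langle K_z,TK_w\rangle=\overline{\langle TK_w,K_z\rangle}=\overline{(TK_w)(z)}.$$
The two descriptions together give property (i) at once: for fixed $w$ the function $u\mapsto H(u,w)=(TK_w)(u)$ is the element $TK_w$ of $F^2$, and for fixed $z$ the function $u\mapsto\overline{H(z,u)}=(T^*K_z)(u)$ is the element $T^*K_z$ of $F^2$, since both $T$ and $T^*$ map $F^2$ into $F^2$.

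For property (ii), I would rewrite the pointwise value of $Tf$ as an inner product and expand it as an integral. For $f\in F^2$ and $z\in\C$,
$$Tf(z)=\langle Tf,K_z\rangle=\langle f,T^*K_z\rangle=\inc f(w)\,\overline{(T^*K_z)(w)}\,d\lambda(w)=\inc H(z,w)f(w)\,d\lambda(w).$$
The only analytic point is the absolute convergence of the integral, which follows from the Cauchy--Schwarz inequality because $f$ and $T^*K_z$ both lie in $F^2\subset L^2(\C,d\lambda)$, so that their product belongs to $L^1(\C,d\lambda)$.

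Uniqueness is then a short orthogonality argument. If $H_1$ and $H_2$ both satisfy (i) and (ii), fix $z\in\C$ and put $G(w)=H_1(z,w)-H_2(z,w)$; property (i) guarantees $\overline{G}\in F^2$. Subtracting the two representations yields $\langle f,\overline{G}\rangle=\inc G(w)f(w)\,d\lambda(w)=0$ for every $f\in F^2$, so $\overline{G}=0$ in $F^2$ and hence $H_1(z,\cdot)=H_2(z,\cdot)$; since $z$ is arbitrary, $H_1=H_2$.

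I do not expect a serious obstacle here: the substance of the statement is that a single function $H$ can play both roles in property (i) simultaneously, and this is exactly what the adjoint identity $H(z,w)=\overline{(T^*K_z)(w)}$ provides. The remaining work, namely the absolute convergence of the representing integral and the orthogonality step for uniqueness, is routine Hilbert space manipulation.
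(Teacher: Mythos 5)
Your proposal is correct and follows essentially the same route as the paper: you define the kernel by $H(z,w)=(TK_w)(z)=\overline{(T^*K_z)(w)}$, derive properties (i) and (ii) from the adjoint identity and the reproducing property exactly as the paper does, and your uniqueness step (orthogonality of the difference $\overline{G}$ to all of $F^2$) is just a cosmetic variant of the paper's argument, which tests the representation against the kernel functions $K_w$ and applies the reproducing property to $\overline{H(z,\cdot)}\in F^2$.
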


\begin{proof}
Note that for any $f\in F^2$ and $z\in\C$ we have
\begin{align*}
Tf(z)&=\langle Tf, K_z\rangle=\langle f, T^*K_z\rangle\\
&=\inc\overline{(T^*K_z)(w)}f(w)\,d\lambda(w)\\
&=\inc H(z,w)f(w)\,d\lambda(w),
\end{align*}
where
\begin{align*}
H(z,w)&=\overline{(T^*K_z)(w)}=\overline{\langle T^*K_z,
K_w\rangle}=\langle K_w, T^*K_z\rangle\\
&=\langle TK_w, K_z\rangle=(TK_w)(z).
\end{align*}
For any fixed $w$ the function $z\mapsto H(z,w)$ is simply
$TK_w$ in $F^2$. Similarly, for any fixed $z\in\C$, the function
$w\mapsto\overline{H(z,w)}$ is simply $T^*K_z$ in $F^2$.

On the other hand, if $T$ admits the integral representation
in (\ref{integral_rep_for_T}) with $H(z,w)$ satisfying the
conditions in (i), then for any $w,z\in\C$ we have
\begin{align*}
(TK_w)(z)&=\inc H(z,u)K_w(u)\,d\lambda(u)\\
&=\overline{\inc\overline{H(z,u)}K(w,u)\,d\lambda(u)}\\
&=\overline{\overline{H(z,w)}}=H(z,w),
\end{align*}
which shows that the integral kernel $H(z,w)$ is {\it uniquely}
determined by $T$.
\end{proof}

It is clear that the integral representation above also holds for
operators on other reproducing kernel Hilbert spaces of analytic
functions. The proof used the boundedness of $T$ to guarantee
that the function $T^*K_z$ is well defined in $F^2$. If
$T: F^2\to F^2$ is an unbounded linear operator whose domain
contains all kernel functions $K_z$, then the integral kernel
$H(z,w)=(TK_w)(z)$ is still well defined, but it is not clear if the
function $u\mapsto\overline{H(z,u)}$ must be in $F^2$, and so it
is not clear if the result is still correct in this case. Nevertheless,
we still have the following result for certain canonical
integral operators on $F^2$ that are not necessarily bounded. Recall
from \cite{DZ3} that $T^{(s,t)}$ is bounded on $F^2$ if and only
if $|s|^2\geq |t|^2+1$.

\begin{proposition}\label{propBT}
Let $(s, t)\in\mathbb{C}\times\mathbb{C}$ with $|s|>|t|$. Then
$$K^{(s,t)}(z, w)=\langle T^{(s,t)}K_w, K_z\rangle
=e^{\frac{|z|^2+|w|^2}{2}}\langle T^{(s,t)}k_w, k_z\rangle$$
for all $z,w\in\C$.
\end{proposition}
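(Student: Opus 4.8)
The plan is to reduce both identities to the single computation that $T^{(s,t)}$ sends the reproducing kernel $K_w$ to the canonical kernel $K_w^{(s,t)}$, that is, $\bigl(T^{(s,t)}K_w\bigr)(z)=K^{(s,t)}(z,w)$. This is the analogue, for the possibly unbounded operator $T^{(s,t)}$, of the formula $H(z,w)=(TK_w)(z)$ from Proposition \ref{1}; once it is established, the two displayed equalities fall out from the reproducing property and a normalization computation.

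First I would record that the hypothesis $|s|>|t|$ makes everything well-defined: by Lemma 1 of \cite{DZ3} the function $z\mapsto K^{(s,t)}(z,w)=K_w^{(s,t)}(z)$ belongs to $F^2$, so $T^{(s,t)}K_w$ is a genuine element of $F^2$ and every inner product below is meaningful, even though $T^{(s,t)}$ need not be bounded.

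The core step is to evaluate
$$\bigl(T^{(s,t)}K_w\bigr)(z)=\inc K^{(s,t)}(z,u)\,e^{u\overline w}\,d\lambda(u).$$
The key observation is that $u\mapsto K^{(s,t)}(z,u)$ is \emph{anti}-holomorphic, so $g(u):=\overline{K^{(s,t)}(z,u)}$ is holomorphic in $u$; its leading quadratic exponent has coefficient of modulus $|t|/(2|s|)<1/2$, so the same growth estimate that underlies Lemma 1 of \cite{DZ3} gives $g\in F^2$. Since the integrand equals $\overline{g(u)\,\overline{K_w(u)}}$, the whole integral is $\overline{\langle g,K_w\rangle}$, and the reproducing property $\langle g,K_w\rangle=g(w)$ yields $\bigl(T^{(s,t)}K_w\bigr)(z)=\overline{g(w)}=K^{(s,t)}(z,w)$. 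This conjugation trick is the crux of the argument: it replaces a direct Gaussian integral by one application of the reproducing formula, and the only (minor) obstacle is verifying $g\in F^2$, which is exactly where $|s|>|t|$ enters.

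Finally I would assemble the two equalities. Because $T^{(s,t)}K_w\in F^2$, the reproducing property $h(z)=\langle h,K_z\rangle$ with $h=T^{(s,t)}K_w$ gives $K^{(s,t)}(z,w)=\langle T^{(s,t)}K_w,K_z\rangle$, the first identity. For the second, I would substitute $K_w=\|K_w\|_2\,k_w$ and $K_z=\|K_z\|_2\,k_z$ and use that the norms $\|K_w\|_2=e^{|w|^2/2}$ and $\|K_z\|_2=e^{|z|^2/2}$ are real positive scalars; pulling them out of the (linear) operator and of the inner product produces the factor $e^{(|z|^2+|w|^2)/2}$ in front of $\langle T^{(s,t)}k_w,k_z\rangle$, as claimed.
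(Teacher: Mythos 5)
Your proof is correct and takes essentially the same route as the paper's: the paper also reduces both identities to showing $\bigl(T^{(s,t)}K_w\bigr)(z)=K^{(s,t)}(z,w)$, first checking that $u\mapsto\overline{K^{(s,t)}(z,u)}$ lies in $F^2$ (via the symmetry relation with $K^{(\overline s,-t)}$ and Lemma 1 of \cite{DZ3}, rather than your direct growth estimate) and then applying exactly your conjugation-plus-reproducing-property argument, which it had already recorded as the second half of the proof of Proposition~\ref{1}. The only difference is presentational: you re-derive inline what the paper cites.
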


\begin{proof}
Recall that for any $w\in\C$ we write
$$K^{(s,t)}_w(z)=K^{(s,t)}(z,w),\qquad z\in\C.$$
It follows from Lemma 1 of \cite{DZ3} that $K^{(s,t)}_w\in F^2$ if and
only if $|s|>|t|$. It is easy to see that
\begin{equation}\label{eq3}
\overline{K^{(s,t)}(z,w)}=\frac{\sqrt{\overline s}}{\overline{\sqrt s}}
\,K^{(\overline s,-t)}(w,z).
\end{equation}
So the function $w\mapsto\overline{K^{(s,t)}(z,w)}$ belongs to
$F^2$ if and only if $|s|>|t|$. By the second half of the proof of
Proposition~\ref{1}, we have for $|s|>|t|$ that
\begin{equation}\label{eqTKw}
\left(T^{(s,t)}K_w\right)(u)=K^{(s,t)}(u,w)=K^{(s,t)}_w(u).
\end{equation}
Consequently, for $|s|>|t|$, we have
$$\langle T^{(s,t)}K_w, K_z\rangle=\langle K^{(s,t)}_w, K_z\rangle
=K^{(s,t)}_w(z)=K^{(s,t)}(z,w)$$
for all $z,w\in\C$.
\end{proof}

In this section we will obtain an upper bound for the two-parameter
integral kernels $K^{(s,t)}(z,w)$, which we will call {\it canonical integral
kernels}. This estimate in turn will enable us to prove several new results
later in the paper about the corresponding canonical integral operators
$T^{(s,t)}$.

\begin{corollary}\label{CorKleq}
Let $(s, t)\in\C\times\C$ with $|s|>|t|$. Then
$$\left|K^{(s,t)}(z,w)\right|\le\inc\left|T^{(s,t)}K_w(v)K_z(v)
\right|\,d\lambda(v)$$
and
$$\left|K^{(s,t)}(z,w)\right|\le\inc\left|{T^{(\overline{s},-t)}
K_z(v)K_w(v)}\right|\,d\lambda(v).$$
\end{corollary}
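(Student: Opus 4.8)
The plan is to read off both inequalities directly from the inner-product formula in Proposition~\ref{propBT} together with the triangle inequality for integrals. Recall from that proposition that, under the standing hypothesis $|s|>|t|$, we have $K^{(s,t)}(z,w)=\langle T^{(s,t)}K_w,K_z\rangle$. Writing the $F^2$ inner product explicitly as an integral against the Gaussian measure $d\lambda$ turns this into
$$K^{(s,t)}(z,w)=\inc \left(T^{(s,t)}K_w\right)(v)\,\overline{K_z(v)}\,d\lambda(v).$$

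For the first inequality I would simply take absolute values and pull them inside the integral. Since $\bigl|\overline{K_z(v)}\bigr|=|K_z(v)|$, this gives
$$\left|K^{(s,t)}(z,w)\right|\le \inc \left|\left(T^{(s,t)}K_w\right)(v)\right|\,|K_z(v)|\,d\lambda(v),$$
which is precisely the first assertion once the two factors in the integrand are combined under a single modulus.

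For the second inequality I would exploit the symmetry relation~(\ref{eq3}), namely $\overline{K^{(s,t)}(z,w)}=\frac{\sqrt{\overline s}}{\overline{\sqrt s}}\,K^{(\overline s,-t)}(w,z)$. The scalar prefactor is unimodular, since $|\sqrt{\overline s}|=|\overline{\sqrt s}|=\sqrt{|s|}$, so taking moduli yields $\left|K^{(s,t)}(z,w)\right|=\left|K^{(\overline s,-t)}(w,z)\right|$. The parameters $(\overline s,-t)$ again satisfy the hypothesis of Proposition~\ref{propBT}, because $|\overline s|=|s|>|t|=|{-t}|$, so I may invoke the first inequality already established, but with $(s,t)$ replaced by $(\overline s,-t)$ and with the roles of $z$ and $w$ interchanged. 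This produces
$$\left|K^{(\overline s,-t)}(w,z)\right|\le \inc \left|\left(T^{(\overline s,-t)}K_z\right)(v)\right|\,|K_w(v)|\,d\lambda(v),$$
and chaining this with the previous identity gives the second assertion.

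I do not anticipate a genuine obstacle, as the argument is a direct consequence of Proposition~\ref{propBT} and the triangle inequality. The only two points requiring a moment's care are the verification that the prefactor in~(\ref{eq3}) has modulus one, and the observation that the substitution $(s,t)\mapsto(\overline s,-t)$ preserves the hypothesis $|s|>|t|$ so that the first inequality may legitimately be reused; both are elementary.
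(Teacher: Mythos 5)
Your proposal is correct and follows essentially the same route as the paper: the first inequality is read off from Proposition~\ref{propBT} by writing the inner product as an integral and applying the triangle inequality, and the second follows from the symmetry relation~(\ref{eq3}) (with unimodular prefactor) applied to the first inequality with parameters $(\overline{s},-t)$ and the roles of $z$ and $w$ exchanged. The paper's proof is just a terser statement of exactly these two steps.
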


\begin{proof}
The first inequality follows directly from Proposition~\ref{propBT}.
By (\ref{eq3}), we have
$$\left|K^{(s,t)}(z,w)\right|=\left|K^{(\,\overline{s},-{t})}(w,z)\right|,$$
which then yields the second inequality.
\end{proof}

Recall from Equation (1.18) of \cite{B1} that
\begin{equation}\label{eqIC}
\inc e^{\frac{1}{2}\gamma w^{2}+aw}e^{\frac{1}{2}\overline{\delta w^2}
+\overline{bw}}\,d\lambda(w)
=\frac{1}{\sqrt{1-\gamma\overline{\delta}}}
\exp\left[\frac{\overline{\delta}a^{2}+\gamma \overline{b}^{2}
+2a\overline{b}}{2(1-\gamma\overline{\delta})}\right]
\end{equation}
under the assumption $|\gamma+\delta|^{2}<4$, which implies that
$$\re(1-\gamma\overline\delta)=1-\frac14|\gamma+\delta|^2+\frac14|
\gamma-\delta|^2>0.$$

\begin{lemma}\label{lemiTK}
Let $(s, t)\in\C\times\C$ with $|s|>|t|$. Then for all $z, w\in\C$ we have
\begin{multline*}
\inc\left|T^{(s,t)}K_w(v)K_z(v)\right|\,d\lambda(v)\\
=\frac{2\sqrt{|s|}}{\sqrt{4|s|^2-|t|^2}}
\left|\exp\left(-\frac{\overline{t}}{2s}\overline{w}^2+\frac{s{t}
\left(w+\overline{s}z\right)^{2}}{2|s|^2(4|s|^2-|t|^2)}\right)\right|
\exp\left(\frac{\left|{w}+\overline{s}{z}\right|^2}{4|s|^2-|t|^2}\right)
\end{multline*}
and
\begin{multline*}
\inc\left|{T^{(\overline{s},-t)}K_z(v)K_w(v)}\right|\,d\lambda(v)\\
=\frac{2\sqrt{|s|}}{\sqrt{4|s|^2-|t|^2} }\left|\exp\left(\frac{{t}}{2{s}}{z}^2
-\frac{\overline{s}{t}\left( z+{s}w\right)^{2}}{2|s|^2(4|s|^2-|t|^2)}\right)
\right|\exp\left(\frac{\left|{ z}+{s}{w}\right|^2}{4|s|^2-|t|^2}\right).
\end{multline*}
\end{lemma}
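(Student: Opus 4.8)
The plan is to reduce both identities to the single Gaussian integral formula (\ref{eqIC}). I will carry out the first identity in detail and then obtain the second by a symmetry substitution, since the two integrands differ only by the replacements $(s,t,z,w)\mapsto(\overline s,-t,w,z)$.

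For the first integral I would begin by using (\ref{eqTKw}) to replace $T^{(s,t)}K_w(v)$ by the explicit kernel $K^{(s,t)}(v,w)$, together with $K_z(v)=e^{v\overline z}$. After pulling the factor $\frac{1}{\sqrt s}\exp\!\bigl(-\frac{\overline{tw^2}}{2s}\bigr)$, which is independent of $v$, out of the integral in modulus, the integrand that remains is $\bigl|e^{h(v)}\bigr|$ with
$$h(v)=\frac{t}{2s}\,v^2+\Bigl(\frac{\overline w}{s}+\overline z\Bigr)v.$$
The crucial step is to write $\bigl|e^{h(v)}\bigr|=e^{\re h(v)}=e^{(h(v)+\overline{h(v)})/2}$, which puts the integrand into exactly the mixed holomorphic/antiholomorphic shape of (\ref{eqIC}). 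Matching coefficients gives $\gamma=\delta=\frac{t}{2s}$ and $a=b=\frac12\bigl(\frac{\overline w}{s}+\overline z\bigr)$; note that $|s|>|t|$ forces $|\gamma+\delta|^2=|t|^2/|s|^2<4$, so the hypothesis of (\ref{eqIC}) holds and $1-\gamma\overline\delta=(4|s|^2-|t|^2)/(4|s|^2)>0$.

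Applying (\ref{eqIC}) then produces the prefactor $\frac{1}{\sqrt{|s|}}\cdot\frac{2|s|}{\sqrt{4|s|^2-|t|^2}}=\frac{2\sqrt{|s|}}{\sqrt{4|s|^2-|t|^2}}$, and it remains to simplify the resulting exponent. Here I would introduce the abbreviation $A=a=\frac{\overline w+s\overline z}{2s}$, so that $\overline A=\frac{w+\overline s z}{2\overline s}$ and $4|s|^2|A|^2=|w+\overline sz|^2$. The term $2a\overline b=2|A|^2$ contributes exactly the real factor $\exp\!\bigl(\frac{|w+\overline sz|^2}{4|s|^2-|t|^2}\bigr)$, while the two conjugate terms $\overline\delta a^2+\gamma\overline b^2=2\re\bigl(\frac{t}{2s}\overline A^2\bigr)$ combine with the pulled-out factor $-\frac{\overline t\,\overline w^2}{2s}$ and, after using $|s|^2=s\overline s$, yield the remaining factor $\bigl|\exp(-\frac{\overline t}{2s}\overline w^2+\frac{st(w+\overline sz)^2}{2|s|^2(4|s|^2-|t|^2)})\bigr|$. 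This is the bulk of the bookkeeping, and it is the step most likely to hide sign or conjugation slips; tracking the conjugations through $\gamma\overline A^2$ and verifying the algebraic identity $\frac{4|s|^2\gamma\overline A^2}{4|s|^2-|t|^2}=\frac{st(w+\overline sz)^2}{2|s|^2(4|s|^2-|t|^2)}$ is the only genuinely delicate computation.

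For the second identity I would simply invoke the first with $(s,t)$ replaced by $(\overline s,-t)$ and $(z,w)$ interchanged, which is legitimate since $|\overline s|=|s|>|t|=|{-t}|$. The prefactor and the real exponential are immediately symmetric, and the only apparent discrepancy — the first identity yields $\frac{\overline t}{2\overline s}\overline z^2$ inside the modulus whereas the statement records $\frac{t}{2s}z^2$ — disappears because $\re(\frac{\overline t}{2\overline s}\overline z^2)=\re(\frac{t}{2s}z^2)$, so the two expressions have equal modulus. I expect no obstacle here beyond the algebra already handled in the first part; the conceptual content is entirely the identity $\bigl|e^h\bigr|=e^{\re h}$ together with the ``doubling'' structure of (\ref{eqIC}).
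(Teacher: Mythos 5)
Your proposal is correct and follows essentially the same route as the paper: both reduce the integral to formula \eqref{eqIC} by writing $\left|e^{h(v)}\right|$ as a product of a holomorphic and an antiholomorphic Gaussian factor (the paper writes it as $\left|e^{h(v)/2}\right|^2$, which is the same device as your $e^{(h+\overline h)/2}$), with the identical parameter matching $\gamma=\delta=t/(2s)$, $a=b=(\overline w+s\overline z)/(2s)$, and both obtain the second identity from the first via the substitution $(s,t,z,w)\mapsto(\overline s,-t,w,z)$. Your extra remark that $\re\bigl(\tfrac{\overline t}{2\overline s}\overline z^2\bigr)=\re\bigl(\tfrac{t}{2s}z^2\bigr)$ reconciles the conjugated term with the stated form is a detail the paper leaves implicit, and it is handled correctly.
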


\begin{proof}
It is clear from \eqref{eqTKw} that
\begin{multline*}
\inc \left|T^{(s,t)}K_w(v)K_z(v)\right|d\lambda(v)\\
=\frac{1}{\sqrt{|s|}}\left|\exp\left(-\frac{\overline{t}}{2s}\overline{w}^2
\right)\right|\inc \left|\exp\left(\frac{t}{2s}v^2+\frac{ v\overline{w}}{s}
+v\overline{z}\right)\right|d\lambda(v).
\end{multline*}
Since $|t|<2|s|$, it follows from \eqref{eqIC} that
\begin{align*}
&\inc\left|\exp\left(\frac{t}{4s}v^2+\frac{ v\overline{w}}{2s}+
\frac{v\overline{z}}{2}\right)\right|^2 d\lambda(v)\\
&\quad=\frac{2|s|}{\sqrt{4|s|^2-|t|^2} }\exp \left(\frac{s\overline{t}
\left(\frac{\overline{w}}{2s}+\frac{\overline{z }}{2}\right)^{2}+
t\overline{s}\left(\frac{ w}{2\overline{s}}+\frac{z}{2}\right)^{2}+
\left(\overline{w}+s\overline{z}\right)\left({ w}+\overline{s}{z}\right)}
{4|s|^2-|t|^2}\right)\\
&\quad=\frac{2|s|}{\sqrt{4|s|^2-|t|^2}}\left|\exp\left(\frac{s{t}
\left(w+\overline{s}z\right)^{2}}{2|s|^2(4|s|^2-|t|^2)}\right)\right|
\exp\left(\frac{\left|{w}+\overline{s}{z}\right|^2}{4|s|^2-|t|^2}\right).
\end{align*}
This proves the first equality. Replace $(s,t)$ by $(\overline{s},-t)$ and 
interchange $z$ and $w$ in the first equality. Then we obtain the second 
equality.
\end{proof}

We are now ready to prove the main result of this section, which gives
an useful upper bound for the two-parameter canonical integral kernel.

\begin{theorem}\label{thmKzw}
Let $(s, t)\in\C\times\C$ with $|s|>|t|$. Then
$$\left| K^{(s,t)}(z,w)\right|\leq C\,\exp\left[\frac{(|s|^2+1)
(|z|^2+|w|^2)}{3|s|^2-|t|^2+1}-\frac{(|s|^2-|t|^2-1)
\re({s\overline{z}w})}{|s|^2(3|s|^2-|t|^2+1)}\right]$$
for any $z, w\in\C$, where
\begin{align*}
C=\frac{1}{\sqrt{|s|}}\left[\frac{2{|s|}}{\sqrt{4|s|^2-|t|^2}}
\right]^{\displaystyle2(4|s|^2-|t|^2)/(3|s|^2-|t|^2+1)}
\geq\frac{1}{\sqrt{|s|}}.
\end{align*}
\end{theorem}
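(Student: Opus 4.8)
The plan is to feed the two estimates of Corollary~\ref{CorKleq} into Lemma~\ref{lemiTK} and then symmetrize. Writing $I_1(z,w)$ and $I_2(z,w)$ for the two explicit integrals evaluated in Lemma~\ref{lemiTK}, Corollary~\ref{CorKleq} gives $|K^{(s,t)}(z,w)|\le I_1$ and $|K^{(s,t)}(z,w)|\le I_2$, so multiplying the two inequalities and taking square roots yields $|K^{(s,t)}(z,w)|\le (I_1 I_2)^{1/2}$. Both $I_1$ and $I_2$ carry the same prefactor $2\sqrt{|s|}\,/\sqrt{4|s|^2-|t|^2}$, and each is this prefactor times the exponential of a real quadratic expression in $z,w$; hence $(I_1 I_2)^{1/2}$ again has this prefactor times $\exp(\Phi(z,w))$, where $\Phi$ is the arithmetic mean of the two exponents. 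The reason for taking the symmetric (square-root) combination, rather than either bound alone, is that $I_2$ arises from $I_1$ under $(s,t,z,w)\mapsto(\overline s,-t,w,z)$, so the genuinely mixed \emph{holomorphic} term proportional to $zw$ in each exponent occurs with opposite signs and cancels in the mean (whereas the Hermitian cross term $\re(s\overline z w)$ survives).

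First I would expand $|w+\overline s z|^2$ and $|z+sw|^2$, take real parts of the two holomorphic exponential factors, and collect $\Phi$ into a Hermitian part $\alpha(|z|^2+|w|^2)+\beta\,\re(s\overline z w)$ together with the surviving non-Hermitian terms $\re(\mathcal A z^2+\mathcal B w^2)$, where $\mathcal A,\mathcal B$ are explicit and $|\mathcal A|=|\mathcal B|$. The substitution $\zeta=\sqrt{\overline s}\,z$, $\omega=\sqrt s\,w$ is convenient here: it sends $\re(s\overline z w)$ to $\re(\overline\zeta\omega)$ and $\mathcal A z^2+\mathcal B w^2$ to a constant multiple of $\zeta^2-\omega^2$, so that $\Phi$ becomes a manifestly symmetric real quadratic form in $(\zeta,\omega)$. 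The target bound, rewritten in the same variables, is the purely Hermitian form with the stated coefficients and denominator $3|s|^2-|t|^2+1$.

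The decisive step is then to dominate $\Phi$ by the target form up to a multiplicative constant, i.e. to show that (target form)$-\Phi$ is positive semidefinite as a real quadratic form on $\C^2\cong\R^4$. Because the non-Hermitian part only couples $\re\zeta^2,\re\omega^2$ (and likewise the imaginary parts), I expect this difference to block-diagonalize into two $2\times 2$ real symmetric matrices whose positivity, after clearing denominators, should reduce to an elementary inequality of the type $(|s|-1)^2\ge 0$; note also that in the compact regime $|s|^2>|t|^2+1$ one has $|s|>1$, which keeps the relevant combinations positive. The constant $C$ should emerge from the same computation, by rebalancing the prefactor $2\sqrt{|s|}\,/\sqrt{4|s|^2-|t|^2}$ against the change of denominator from $4|s|^2-|t|^2$ to $3|s|^2-|t|^2+1$ via a weighted AM--GM (Young) step, which is what would raise $2|s|/\sqrt{4|s|^2-|t|^2}$ to the exponent $2(4|s|^2-|t|^2)/(3|s|^2-|t|^2+1)$; the bound $C\ge 1/\sqrt{|s|}$ is then immediate from $4|s|^2\ge 4|s|^2-|t|^2$. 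The main obstacle I anticipate is precisely this last bookkeeping: controlling the indefinite holomorphic terms $\re(\mathcal A z^2+\mathcal B w^2)$ while simultaneously matching the Hermitian coefficients to the denominator $3|s|^2-|t|^2+1$ and extracting the sharp constant, rather than settling for the weaker bound produced by the crude estimate $\re(\mathcal A z^2)\le|\mathcal A|\,|z|^2$.
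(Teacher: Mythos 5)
Your opening move is the same as the paper's: multiply the two estimates of Corollary~\ref{CorKleq} (evaluated in Lemma~\ref{lemiTK}) and take square roots. This yields precisely the paper's intermediate inequality \eqref{eqTKzw},
$$\left|K^{(s,t)}(z,w)\right|\le P\,E^{\theta}\exp(H),\qquad
P=\frac{2\sqrt{|s|}}{\sqrt{4|s|^2-|t|^2}},\quad
E=\left|\exp\left(\tfrac{t}{2s}z^2-\tfrac{\overline t}{2s}\overline w^2\right)\right|,$$
where $\theta=(5|s|^2-|t|^2-1)/[2(4|s|^2-|t|^2)]$ and $H$ is the Hermitian part; your observation that the mixed $zw$ holomorphic term cancels in the product is correct. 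The gap is in your ``decisive step''. Writing $\Phi=\theta\log E+H$ and $\Psi$ for the target exponent, one computes the identity
$$\Psi-\Phi=\frac{5|s|^2-|t|^2-1}{2(4|s|^2-|t|^2)(3|s|^2-|t|^2+1)}\,Q(z,w),$$
where $Q$ is a quadratic form that, in your variables $\zeta=\sqrt{\overline s}\,z$, $\omega=\sqrt s\,w$ (after multiplying by $|s|^2$ and rotating $t$ to be nonnegative), block-diagonalizes into two $2\times2$ matrices with trace $2|s|(|s|^2+1)>0$ and determinant $\frac{1}{4}(|t|^2-|s|^2+1)^2(4|s|^2-|t|^2)\ge0$. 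So the positive semidefiniteness of $Q$ does hold for all $|t|<|s|$ --- though the underlying inequality is not your conjectured $(|s|-1)^2\ge0$, and it degenerates identically on the unitary circle $|t|^2=|s|^2-1$. The fatal issue is the scalar prefactor $5|s|^2-|t|^2-1$: the theorem assumes only $|s|>|t|$, and when $|s|<1/2$ this prefactor can be negative while $|t|<|s|$ (e.g.\ $s=2/5$, $t=7/20$). In that regime $\Psi\le\Phi$, and $\Phi-\Psi$ is a nonzero positive semidefinite quadratic form, hence unbounded above; consequently no multiplicative constant can convert your bound $P\,e^{\Phi}$ into $C\,e^{\Psi}$. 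Pointwise domination, which is exactly what your plan requires, is impossible there, even though the theorem remains true.

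The idea you are missing --- and the paper's actual mechanism --- is a bootstrap with the exact identity \eqref{eqnormK}, namely $|K^{(s,t)}(z,w)|=\frac{1}{\sqrt{|s|}}\,E\,\exp\left(\re(s\overline z w)/|s|^2\right)$. Substituting this identity into \eqref{eqTKzw} and \emph{solving for} the indefinite factor $E$ (legitimate for all $|s|>|t|$, since the solving exponent $1-\theta=(3|s|^2-|t|^2+1)/[2(4|s|^2-|t|^2)]$ is positive even when $\theta<0$) gives an upper bound on $E$ itself; feeding that bound back into \eqref{eqnormK} eliminates $E$ entirely and produces exactly the stated exponent and constant. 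In particular, the power $2(4|s|^2-|t|^2)/(3|s|^2-|t|^2+1)=1/(1-\theta)$ appearing in $C$ is the solving exponent of this bootstrap, not the outcome of a Young/AM--GM rebalancing as you guessed; in a pure domination argument the prefactor would simply stay equal to $P$ and one would only need $P\le C$. For the record, in the regime $5|s|^2\ge|t|^2+1$ --- which contains every bounded case $|s|^2\ge|t|^2+1$ and hence all later applications in the paper --- your route can be completed, since there $\Phi\le\Psi$ by the computation above and $P\le C$; but as a proof of the theorem in its stated generality it is incomplete.
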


\begin{proof}
A direct calculation shows that
$$s{t}\left(w+\overline sz\right)^2-\overline st\left( z+sw\right)^2
=t(|s|^2-1)(\overline sz^{2}-sw^2)$$
and
$$\left|{z}+{s}{w}\right|^2+\left|{w}+\overline{s}{z}\right|^2=
(1+|s|^2)\left(|z|^2+|w|^2\right)+4\re ({s\overline{z}w}).$$
By Corollary \ref{CorKleq} and Lemma \ref{lemiTK}, we have
\begin{align*}
\left|K^{(s,t)}(z,w)\right|^2&\leq\inc\left|T^{(s,t)}K_w(v)K_z(v)\right|
\,d\lambda(v)\inc\left|{T^{(\overline{s},-t)}K_z(v)K_w(v)}\right|\,
d\lambda(v)\\[8pt]
&=\frac{4|s|}{{4|s|^2-|t|^2} }\left|\exp\left(\frac{{t}}{2{s}}{z}^2
-\frac{\overline{t}}{2s}\overline{w}^2+\frac{{t}(|s|^2-1)
(\overline{s}z^{2}-sw^2)}{2|s|^2(4|s|^2-|t|^2)}\right)\right|\\[8pt]
&\quad\cdot\,\exp\left(\frac{(1+|s|^2)\left(|z|^2+|w|^2\right)
+4\re ({s\overline{z}w})}{4|s|^2-|t|^2}\right).
\end{align*}
Since
$$\left|\exp\left(\frac{t}{2s}z^2-\frac{\overline{t}}{2s}\overline{w}^2
\right)\right|=\left|\exp\left(\frac{{t}(\overline{s}{z}^2-s{w}^2)}{2{|s|^2}}
\right)\right|,$$
it follows that
\begin{align}\label{eqTKzw}
\left| K^{(s,t)}(z,w)\right|&\leq\frac{2\sqrt{|s|}}{\sqrt{4|s|^2-|t|^2} }
\left|\exp\left(\frac{t}{2s}z^2-\frac{\overline{t}}{2s}\overline{w}^2\right)
\right|^{(5|s|^2-|t|^2-1)/[2(4|s|^2-|t|^2)]}\nonumber\\[8pt]
&\quad\cdot\,\exp\left(\frac{(|s|^2+1)(|z|^2+|w|^2)+4\re ({s\overline{z}w})}
{2(4|s|^2-|t|^2)}\right).
\end{align}
On the other hand, it is clear from the definition of $K^{(s,t)}(z,w)$ that
\begin{equation}\label{eqnormK}
\left| K^{(s,t)}(z,w)\right|=\frac{1}{\sqrt{|s|}}\left|\exp\left(\frac{t}{2s}z^2
-\frac{\overline{t}}{2s}\overline{w}^2\right)\right|
\exp\left(\frac{\re ({s\overline{z}w})}{|s|^2}\right).
\end{equation}
Combining \eqref{eqTKzw} with \eqref{eqnormK}, we obtain
\begin{align*}
\left|\exp\left(\frac{t}{2s}z^2-\frac{\overline{t}}{2s}\overline{w}^2
\right)\right|&\leq\left[\frac{2{|s|}}{\sqrt{4|s|^2-|t|^2}}
\right]^{2(4|s|^2-|t|^2)/(3|s|^2-|t|^2+1)}\\[8pt]
&\quad\cdot\,\exp\left(\frac{(|s|^2+1)(|z|^2+|w|^2)}{3|s|^2-|t|^2+1}
-\frac{2(2|s|^2-|t|^2)\re({s\overline{z}w})}{|s|^2(3|s|^2-|t|^2+1)}\right).
\end{align*}
Using this and \eqref{eqnormK} again, we deduce that
\begin{align*}
&\left|K^{(s,t)}(z,w)\right|\\[8pt]
&\leq C\,\exp\left(\frac{(|s|^2+1)(|z|^2+|w|^2)}
{3|s|^2-|t|^2+1}-\frac{2(2|s|^2-|t|^2)\re ({s\overline{z}w})}
{|s|^2(3|s|^2-|t|^2+1)}\right)\exp\left(\frac{\re ({s\overline{z}w})}
{|s|^2}\right)\\[8pt]
&=C\,\exp\left(\frac{(|s|^2+1)(|z|^2+|w|^2)}{3|s|^2-|t|^2+1}
-\frac{(|s|^2-|t|^2-1)\re({s\overline{z}w})}{|s|^2(3|s|^2-|t|^2+1)}\right).
\end{align*}
This completes the proof of the theorem.
\end{proof}

\begin{remark}
For $t=0$ we deduce directly from \eqref{eqTKzw} that
$$\left|K^{(s,t)}(z,w)\right|\leq\frac{1}{\sqrt{|s|}}\,
\exp\left[\frac{(|s|^2+1)(|z|^2+|w|^2)+4\re ({s\overline{z}w})}
{8|s|^2}\right],$$
which is more precise than the estimate in Theorem~\ref{thmKzw}.
This inequality can also be proved directly using the elementary inequality
$|a|^2+ |b|^2\geq 2|ab|$.
\end{remark}

Recall that if $T: F^2\to F^2$ is a (possibly unbounded) linear operator
whose domain contains all kernel functions $K_z$, then the function
$$(z,w)\in\C\times\C\mapsto\langle Tk_z, k_w\rangle$$
is called the bivariate Berezin transform of $T$. The restriction of the
bivariate Berezin transform to the diagonal, namely, the function
$$z\in\C\mapsto\langle Tk_z, k_z\rangle$$
is called the Berezin transform of $T$. The following result gives an
upper estimate for the bivariate Berezin transform of $T^{(s,t)}$.

\begin{theorem}\label{thmTKzw}
Let $(s, t)\in\C\times\C$ with $|s|>|t|$. Then
$$\left|\langle T^{(s,t)}k_w, k_z\rangle\right|\leq C\,
\exp\left[-\frac{(|s|^2-|t|^2-1)}{2|s|^2(3|s|^2-|t|^2+1)}
\left[|s|^2(|z|^2+|w|^2)+2\re ({s\overline{z}w})\right]\right]$$
for all $z$ and $w$ in $\C$, and consequently,
$$\left|\langle T^{(s,t)}k_z, k_z\rangle\right|\leq C\,
\exp\left[-\frac{(|s|^2-|t|^2-1)\left[|s|^2+\re ({s})\right]}
{|s|^2(3|s|^2-|t|^2+1)}|z|^2\right]$$
for all $z\in\C$. Furthermore, if $|s|^2\ge |t|^2+1$, then
$$ \left|\langle T^{(s,t)}k_w, k_z\rangle\right|\leq C\,
\exp\left[-\frac{(|s|^2-|t|^2-1)}{2|s|^2(3|s|^2-|t|^2+1)}|z+s w|^2
%-\frac{(|s|^2-|t|^2-1)}{2(3|s|^2-|t|^2+1)}(|z|-|w|)^2
\right].$$
Here $C$ is the constant from Theorem~\ref{thmKzw}.
\end{theorem}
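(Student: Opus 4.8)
The plan is to derive all three inequalities directly from the kernel estimate of Theorem~\ref{thmKzw}, using Proposition~\ref{propBT} as the bridge. That proposition gives
$$\langle T^{(s,t)}k_w, k_z\rangle = e^{-(|z|^2+|w|^2)/2}\,K^{(s,t)}(z,w),$$
so the first bound should follow simply by multiplying the estimate of Theorem~\ref{thmKzw} by $e^{-(|z|^2+|w|^2)/2}$ and simplifying the coefficient of $|z|^2+|w|^2$. Writing $D=3|s|^2-|t|^2+1$, that coefficient becomes
$$-\frac{1}{2}+\frac{|s|^2+1}{D}=-\frac{|s|^2-|t|^2-1}{2D}.$$
Factoring the common quantity $(|s|^2-|t|^2-1)/(2|s|^2D)$ out of this term and out of the $\re(s\overline{z}w)$ term then recombines the exponent precisely into the bracket $|s|^2(|z|^2+|w|^2)+2\re(s\overline{z}w)$. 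The only work here is this short reduction of the constant, and I expect no obstacle.

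For the diagonal estimate I would set $z=w$ in the first bound. Then $|z|^2+|w|^2=2|z|^2$ and $\re(s\overline{z}z)=\re(s)|z|^2$, so the bracket collapses to $2(|s|^2+\re s)|z|^2$, which yields the second inequality immediately.

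The third inequality is where a genuine idea enters. I would use the completing-the-square identity
$$|z+sw|^2=|z|^2+|s|^2|w|^2+2\re(s\overline{z}w)$$
and compare it with the bracket $|s|^2(|z|^2+|w|^2)+2\re(s\overline{z}w)$ from the first bound; their difference is exactly $(|s|^2-1)|z|^2$, so the bracket equals $|z+sw|^2+(|s|^2-1)|z|^2$. Under the hypothesis $|s|^2\ge|t|^2+1\ge1$ the extra term $(|s|^2-1)|z|^2$ is nonnegative, while the overall prefactor $-(|s|^2-|t|^2-1)/(2|s|^2D)$ is nonpositive; hence discarding this nonnegative term only enlarges the exponential, producing the stated bound in terms of $|z+sw|^2$. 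The main point to check is that these sign conditions align—that $|s|^2-|t|^2-1\ge0$ simultaneously makes the prefactor nonpositive and the discarded term nonnegative—so that the inequality runs in the correct direction.
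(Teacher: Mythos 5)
Your proposal is correct and follows essentially the same route as the paper: Proposition~\ref{propBT} plus Theorem~\ref{thmKzw}, the coefficient reduction $-\tfrac12+\tfrac{|s|^2+1}{3|s|^2-|t|^2+1}=-\tfrac{|s|^2-|t|^2-1}{2(3|s|^2-|t|^2+1)}$, specialization to $z=w$, and the observation that the bracket exceeds $|z+sw|^2$ by the nonnegative term $(|s|^2-1)|z|^2$ when $|s|^2\ge|t|^2+1$, which is exactly the paper's elementary inequality. No gaps.
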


\begin{proof}
By Proposition~\ref{propBT} and Theorem~\ref{thmKzw}, we have
\begin{align*}
&\left|\langle T^{(s,t)}k_w, k_z\rangle\right|\\[8pt]
&\leq C\,\exp\left[-\frac{|z|^2+|w|^2}{2}\right]
\exp\left[\frac{(|s|^2+1)(|z|^2+|w|^2)}{3|s|^2-|t|^2+1}
-\frac{(|s|^2-|t|^2-1)\re ({s\overline{z}w})}{|s|^2(3|s|^2-|t|^2+1)}
\right]\\[8pt]
&=C\,\exp\left[-\frac{(|s|^2-|t|^2-1)}{2|s|^2(3|s|^2-|t|^2+1)}
\left[|s|^2(|z|^2+|w|^2)+2\re ({s\overline{z}w})\right]\right]
\end{align*}
for any $(s, t)\in\C\times\C$ with $|s|>|t|$, as desired. In particular,
if $z=w$, it is clear that
$$\left|\langle T^{(s,t)}k_z, k_z\rangle\right|\leq C\,\exp
\left[-\frac{(|s|^2-|t|^2-1)}{|s|^2(3|s|^2-|t|^2+1)}
\left[|s|^2+\re ({s})\right]|z|^2\right].$$

If we further assume $|s|^2\ge|t|^2+1$, which is equivalent to the
boundedness of $T^{(s,t)}$ on $F^{2}$, then the additional estimates
clearly follow from the elementary inequality
%$$|s|^2(|z|-|w|)^2\leq |s|^2(|z|^2+|w|^2)+2\re ({s\overline{z}w}).$$
$$|s|^2(|z|^2+|w|^2)+2\re ({s\overline{z}w})\geq |z+s w|^2 .$$
This completes the proof of the theorem.
\end{proof}

Recall that $T^{(s,t)}: F^2\to F^2$ is compact if and only if
$|s|^2>|t|^2+1$. When a linear operator $T$ on $F^2$ is compact, its
Berezin transform has the property that $\langle Tk_z, k_z\rangle\to0$
as $z\to\infty$, because $k_z\to0$ weakly in
$F^2$ as $z\to\infty$. The estimate on the Berezin transform of $T^{(s,t)}$
in Theorem~\ref{thmTKzw} clearly confirms this and gives more information:
when $T^{(s,t)}$ is compact on $F^2$, its Berezin tranform tends to $0$
very rapidly (i.e. second-order exponentially) as $z\to\infty$. This is a clear
indication why the results in the next section are reasonable.

Also note that, by Proposition~\ref{propBT}, the Berezin transform of
$T^{(s,t)}$ is given by
$$\langle T^{(s,t)}k_z, k_z\rangle=e^{-|z|^2}K^{(s,t)}(z,z)=
\frac1{\sqrt s}\exp\left[\frac{tz^2-\overline{tz^2}+2|z|^2}{2s}
-|z|^2\right].$$
From this alone it is not even clear that the function above, when
$|s|^2>|t|^2+1$, must tend to $0$ as $z\to\infty$. So the estimates
in this section are highly non-trivial.

On the other hand, for the special case when $s$ is real, the explicit
formula in the previous paragraph yields
$$|\langle T^{(s,t)}k_z, k_z\rangle|=\frac1{\sqrt{|s|}}
\exp\left[\left(\frac1s-1\right)|z|^2\right].$$
In this case, if $|s|^2>|t|^2+1$, then we must have $s>1$ or $s<-1$,
which clearly shows that $\langle T^{(s,t)}k_z, k_z\rangle\to0$
second-order exponentially as $z\to\infty$.

Another interesting (and somewhat surprising) case is when $s$ is real
and $s^2=|t|^2+1$. In this case, the operator $T^{(s,t)}$ is unitary
(this includes a lot of classical linear canonical transforms under the
inverse Bargmann transform) but its Berezin transform still tends to $0$ 
second-order exponentially as $z\to\infty$ unless $t=0$ and $s=1$. 
When $t=0$ and $s=1$, $T^{(s,t)}$ is the identity operator on $F^2$ 
and its Berezin transform is the constant function $1$.

%Lastly, we consider the case when $s$ is real and $|t|^2<s^2<|t|^2+1$.
%In this case, the operator $T^{(s,t)}$ is unbounded. It is natural to ask
%whether $\langle T^{(s,t)}k_z, k_z\rangle$ tends to infinity as $z\to\infty$.
%This is indeed true for $|t|<s<1$. However, if
%$$\max\,\{1,|t|\}<s<\sqrt{|t|^2+1},$$
%then we also have $\langle T^{(s,t)}k_z, k_z\rangle\to0$ second-order
%exponentially as $z\to\infty$.

\section{Membership of $T^{(s,t)}$ in Schatten classes}

It is a classical result in functional analysis that if $T$ is positive and compact
on a Hilbert space $\mathcal{H}$, then there exists an orthonormal set
$\{e_n\}$ in $\mathcal{H}$ and a non-increasing sequence $\{\lambda_n\}$
of positive numbers such that
$$T(x)=\sum_{n}\lambda_n\langle x,e_n\rangle e_n,\qquad x\in\mathcal{H}.$$
The numbers $\lambda_n$ are uniquely determined by $T$ and are called
the singular values of $T$.

We say that a compact linear operator $T$ on $\mathcal H$ belongs to the
Schatten class $S_p$, $0<p<\infty$, if the singular-value sequence
$\{\lambda_n\}$ of $|T|=(T^*T)^{1/2}$ belongs to $l^p$. Thus when we
talk about singular values for $T$ we simply mean singular values for $|T|$.
If $T\in S_p$ with singular values $\{\lambda_n\}$, we will write
$$\|T\|_{S_p}=\left(\sum_n\lambda_n^p\right)^{1/p}.$$
We refer the reader to \cite{Zhu2} for more information about
Schatten classes.

In this section we will show that each compact canonical integral operator
$T^{(s,t)}$ on $F^2$ actually belongs to the Schatten class $S_p$ for all 
$p>0$. We will be able to accomplish this by using estimates of the canonical
integral kernels but without actually knowing the singular values. In the
next section we will then compute the singular values for $T^{(s,t)}$.
The results of this section will help us determine that the singular values
we find in the next section are {\it all} the singular values.

When the integral operator $T^{(s,t)}$ is bounded on $F^2$, it is easy to 
see that its adjoint is also an integral operator and is given by
$$\left(T^{(s,t)}\right)^*f(z)=\inc f(w)\overline{K^{(s,t)}(w,z)}\,
d\lambda(w).$$
By (\ref{eq3}), we have
\begin{align}\label{eqTstar}
\left(T^{(s,t)}\right)^*f(z)=\frac{\sqrt{\overline{s}}}{\overline{\sqrt{s}}}
\inc f(w)K^{(\overline{s},-t)}(z,w)\,d\lambda(w)
=\frac{\sqrt{\overline{s}}}{\overline{\sqrt{s}}}\,T^{(\overline{s},-t)}f(z).
\end{align}
We will follow the standard practice of writing
$$\left|T^{(s,t)}\right|^2=\left(T^{(s,t)}\right)^*T^{(s,t)}$$
on the Fock space $F^2$.

\begin{lemma}\label{lemma|T|}
Let $(s, t)\in\C\times\C$ with $|s|^2\ge|t|^2+1$. Then for any
$f\in F^{2}$ we have
\begin{multline*}
\left|T^{(s,t)}\right|^2f(z)=\\
\frac{1}{\sqrt{|s|^2-|t|^2}}\inc f(u)\exp\left[\frac{ z\overline{u}}
{{|s|^2-|t|^2}}+\frac{|t|^2+1-|s|^2}{|s|^2-|t|^2}
\left(\frac{t}{2\overline{s}}\,z^2+\frac{\bar t}{2s}
\,\overline u^2\right)\right] d\lambda(u).
\end{multline*}
\end{lemma}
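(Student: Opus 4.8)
The plan is to reduce the computation of $\left|T^{(s,t)}\right|^2=\left(T^{(s,t)}\right)^*T^{(s,t)}$ to the evaluation of a single Gaussian integral and then read off the kernel via Proposition~\ref{1}. By the adjoint formula \eqref{eqTstar} we have
$$\left|T^{(s,t)}\right|^2=\frac{\sqrt{\overline s}}{\overline{\sqrt s}}\,T^{(\overline s,-t)}T^{(s,t)},$$
and since $|s|^2\ge|t|^2+1$ forces $|s|>|t|$ (and likewise $|\overline s|>|-t|$), both factors are bounded integral operators on $F^2$, so their product is bounded. Rather than manipulate an iterated integral against a general $f$, I would find the integral kernel of the bounded operator $\left|T^{(s,t)}\right|^2$ directly: by Proposition~\ref{1} its kernel is $H(z,u)=\left(\left|T^{(s,t)}\right|^2K_u\right)(z)$, and the representation \eqref{integral_rep_for_T} then delivers the stated formula for $\left|T^{(s,t)}\right|^2 f$ for \emph{every} $f\in F^2$, with uniqueness guaranteed.

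To compute $H(z,u)$, I would use \eqref{eqTKw} to write $T^{(s,t)}K_u=K^{(s,t)}_u$, which lies in $F^2$ because $|s|>|t|$, and then apply the bounded operator $T^{(\overline s,-t)}$ to it through its own integral representation, obtaining
$$H(z,u)=\frac{\sqrt{\overline s}}{\overline{\sqrt s}}\inc K^{(\overline s,-t)}(z,v)\,K^{(s,t)}(v,u)\,d\lambda(v).$$
Pulling the factors independent of $v$ out front leaves a Gaussian integral in $v$ of exactly the shape of \eqref{eqIC}. Matching the holomorphic and antiholomorphic parts of the integrand gives $\gamma=\delta=t/s$, $a=\overline u/s$ and $b=\overline z/s$, so that $\gamma\overline\delta=|t|^2/|s|^2$ and $1-\gamma\overline\delta=(|s|^2-|t|^2)/|s|^2$. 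The hypothesis $|s|>|t|$ is precisely the convergence condition $|\gamma+\delta|^2=4|t|^2/|s|^2<4$ of \eqref{eqIC}; the formula then produces the prefactor $|s|/\sqrt{|s|^2-|t|^2}$ together with an explicit quadratic exponent in $z$ and $\overline u$.

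The real work — and the step I expect to be the main obstacle — is the bookkeeping in the exponent and in the scalar prefactors. After substituting $a,b,\gamma,\delta$ into the exponent coming from \eqref{eqIC} and combining it with the $v$-independent factor $\exp\!\left[-\tfrac{t}{2\overline s}z^2-\tfrac{\overline t}{2s}\overline u^2\right]$, one must check, using $|s|^2=s\overline s$, that the coefficients of $z^2$ and of $\overline u^2$ both collapse to the common factor $(|t|^2+1-|s|^2)/(|s|^2-|t|^2)$ multiplying $\tfrac{t}{2\overline s}z^2+\tfrac{\overline t}{2s}\overline u^2$, while the cross term reduces to $z\overline u/(|s|^2-|t|^2)$; this is the claimed exponent. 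For the constant, I would track the branch of the square root carefully. When $s$ is not a negative real number, $\sqrt{\overline s}=\overline{\sqrt s}$, so $\sqrt s\,\sqrt{\overline s}=|s|$ and $\sqrt{\overline s}/\overline{\sqrt s}=1$; then the three scalar factors $\sqrt{\overline s}/\overline{\sqrt s}$, the product $1/(\sqrt s\,\sqrt{\overline s})$ of the two kernel normalizations, and $|s|/\sqrt{|s|^2-|t|^2}$ combine to $1/\sqrt{|s|^2-|t|^2}$. On the negative real axis both $\sqrt{\overline s}/\overline{\sqrt s}$ and $\sqrt s\,\sqrt{\overline s}$ acquire a sign, and these signs cancel, so the formula is valid for all admissible $s$. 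Invoking the uniqueness in Proposition~\ref{1} that this $H$ is the kernel of $\left|T^{(s,t)}\right|^2$ then completes the proof.
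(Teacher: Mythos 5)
Your proposal is correct, and it reaches the lemma by a mechanism that differs from the paper's in its scaffolding, even though the computational core is the same. The paper applies $\left|T^{(s,t)}\right|^2$ to a general $f\in F^2$, uses \eqref{eqTstar} to write this as an iterated integral, interchanges the order of integration by Fubini's theorem, and then evaluates the inner integral $\inc K^{(\overline{s},-t)}(z,w)K^{(s,t)}(w,u)\,d\lambda(w)$ by citing the kernel-composition formula (Lemma 2) of \cite{DZ3}, finally checking that the accumulated constant equals $1$. You avoid Fubini entirely: you compute the kernel $H(z,u)=\bigl(\left|T^{(s,t)}\right|^2K_u\bigr)(z)$ of the bounded operator $\left|T^{(s,t)}\right|^2$ by combining \eqref{eqTstar} with \eqref{eqTKw}, evaluate the resulting Gaussian integral directly from Bargmann's formula \eqref{eqIC} --- your parameter matching $\gamma=\delta=t/s$, $a=\overline{u}/s$, $b=\overline{z}/s$, the convergence condition $|t|<|s|$, the collapse of the $z^2$ and $\overline{u}^2$ coefficients to the common factor $(|t|^2+1-|s|^2)/(|s|^2-|t|^2)$, and the cross term $z\overline{u}/(|s|^2-|t|^2)$ are all verified correctly --- and then invoke the uniqueness and representation statements of Proposition~\ref{1} to pass from the kernel to the formula for every $f\in F^2$. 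What your route buys: the interchange of integrals, which the paper leaves implicit, is replaced by the already-proved Proposition~\ref{1}, and the argument is self-contained in the present paper rather than resting on Lemma 2 of \cite{DZ3}. What the paper's route buys: brevity. One simplification for your write-up: the branch case analysis for $\sqrt{s}$ is unnecessary, since $\sqrt{s}\,\overline{\sqrt{s}}=\left|\sqrt{s}\right|^2=|s|$ for every branch, so
$$\frac{\sqrt{\overline{s}}}{\overline{\sqrt{s}}}\cdot\frac{1}{\sqrt{s}\,\sqrt{\overline{s}}}=\frac{1}{\overline{\sqrt{s}}\,\sqrt{s}}=\frac{1}{|s|}$$
holds uniformly in $s\neq0$, and the constant $1/\sqrt{|s|^2-|t|^2}$ falls out with no case distinction.
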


\begin{proof}
It follows from \eqref{eqTstar}, Lemma 2 of \cite{DZ3},
and Fubini's theorem that
\begin{align*}
\left|T^{(s,t)}\right|^2f(z)
&=\frac{\sqrt{\overline{s}}}{\overline{\sqrt{s}}}
\inc T^{(s,t)}f(w) K^{(\overline{s},-t)}(z,w)\,d\lambda(w)\\
&=\frac{\sqrt{\overline{s}}}{\overline{\sqrt{s}}}\inc f(u)\,d\lambda(u)
\inc{K^{(\overline{s},-t)}(z,w)}{K^{({s},t)}(w,u)}\,d\lambda(w)\\
&=c\inc f(u)K^{(|s|^2-|t|^2,0)}(z,u)\exp\left[%\frac{ z\overline{u}}{{|s|^2-|t|^2}}+
%\frac{t(|t|^2+1-|s|^2)}{2\overline{s}(|s|^2-|t|^2)}\,z^2
%+\frac{\bar t(|t|^2+1-|s|^2)}{2s(|s|^2-|t|^2)}
%\,\overline u^2
\frac{|t|^2+1-|s|^2}{|s|^2-|t|^2}\left(\frac{t}{2\overline{s}}\,z^2
+\frac{\bar t}{2s}
\,\overline u^2\right)\right] d\lambda(u),
\end{align*}
where
$$c=\frac{\sqrt{\overline{s}}}{\overline{\sqrt{s}}}
\frac{\sqrt{|s|^2-|t|^2}}{\sqrt{\overline{s}}\sqrt{s}}
\sqrt{\frac{|s|^2}{|s|^2-|t|^2}}=1.$$
This proves the desired result.
\end{proof}

Recall that for any bounded linear operator $T$ on $F^2$ the function
$\widetilde T(z)=\langle Tk_z, k_z\rangle$ is called the Berezin transform
of $T$. The following result is critical for the main result of this section.

\begin{lemma}\label{lemsp}
Let $p>0$ and $(s, t)\in\C\times\C$ with $|s|^2>|t|^2+1$. Then
\begin{equation}\label{eqsp}
\left\|\widetilde{\left|T^{(s,t)}\right|^2}(z)
\right\|_{L^{{p}/{2}}(\mathbb{C},dA)}=
\left[\frac{2\pi|s|}{p(|s|^2-|t|^2-1)(|s|^2-|t|^2)^\frac{p-2}{4}}
\right]^{\frac{2}{p}}.
\end{equation}
\end{lemma}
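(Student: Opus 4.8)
The plan is to reduce the statement to the evaluation of a single non-radial Gaussian integral over $\C$. First I would read off the integral kernel of $\left|T^{(s,t)}\right|^2$ from Lemma~\ref{lemma|T|}: writing $H(z,u)$ for the expression multiplying $f(u)$ under the integral sign, the Berezin transform is obtained by restricting to the diagonal, since for any bounded operator $T$ with integral kernel $H(z,w)=(TK_w)(z)$ as in Proposition~\ref{1} one has $\widetilde T(z)=\langle Tk_z,k_z\rangle=e^{-|z|^2}H(z,z)$ (because $k_z=e^{-|z|^2/2}K_z$). Setting $u=z$ in the kernel and multiplying by $e^{-|z|^2}$, I would combine the two $|z|^2$ contributions into $-\frac{|s|^2-|t|^2-1}{|s|^2-|t|^2}\,|z|^2$ and use the elementary identity $\frac{t}{2\overline s}z^2+\frac{\bar t}{2s}\overline z^2=\frac{\re(ts z^2)}{|s|^2}$ to arrive at the closed form
\[
\widetilde{\left|T^{(s,t)}\right|^2}(z)=\frac{1}{\sqrt{|s|^2-|t|^2}}\,
\exp\left[-\frac{|s|^2-|t|^2-1}{|s|^2-|t|^2}\left(|z|^2+\frac{\re(ts z^2)}{|s|^2}\right)\right].
\]
Because $\left|T^{(s,t)}\right|^2$ is a positive operator, this function is nonnegative (it equals $\|T^{(s,t)}k_z\|_2^2$), so raising it to the power $p/2$ is unambiguous and the $L^{p/2}$ norm is governed by a genuine Gaussian.

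Next I would set $A=\frac{p(|s|^2-|t|^2-1)}{2(|s|^2-|t|^2)}$ and $C=\frac{A\,ts}{|s|^2}$, so that $\bigl(\widetilde{\left|T^{(s,t)}\right|^2}\bigr)^{p/2}=(|s|^2-|t|^2)^{-p/4}\exp\bigl[-A|z|^2-\re(Cz^2)\bigr]$, and the whole problem reduces to computing $\inc e^{-A|z|^2-\re(Cz^2)}\,dA(z)$. To evaluate this I would rescale $z\mapsto z/\sqrt A$ (legitimate since $A>0$) and apply \eqref{eqIC} with $a=b=0$ and $\gamma=\delta=-C/A$; the admissibility hypothesis $|\gamma+\delta|^2<4$ then becomes $|C|<A$, i.e.\ $|t|/|s|<1$, which holds because $|s|^2>|t|^2+1>|t|^2$. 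The formula yields $\inc e^{-A|z|^2-\re(Cz^2)}\,dA(z)=\pi/\sqrt{A^2-|C|^2}$. Alternatively one can diagonalize the real $2\times2$ quadratic form in $z=x+iy$ and get $\pi/\sqrt{\det}$ with $\det=A^2-|C|^2$; either route gives the same answer.

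Finally I would substitute $|C|^2=A^2|t|^2/|s|^2$, so that $A^2-|C|^2=A^2(|s|^2-|t|^2)/|s|^2$ and $\sqrt{A^2-|C|^2}=A\sqrt{|s|^2-|t|^2}/|s|$. Collecting the constants gives $\inc\bigl(\widetilde{\left|T^{(s,t)}\right|^2}\bigr)^{p/2}\,dA=(|s|^2-|t|^2)^{-p/4}\cdot\frac{\pi|s|}{A\sqrt{|s|^2-|t|^2}}=\frac{2\pi|s|}{p(|s|^2-|t|^2-1)}\,(|s|^2-|t|^2)^{(2-p)/4}$; raising to the power $2/p$ and rewriting $(|s|^2-|t|^2)^{(2-p)/4}$ as $(|s|^2-|t|^2)^{-(p-2)/4}$ reproduces the claimed identity exactly.

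I expect the only real obstacle to be the Gaussian integral with the cross term $\re(Cz^2)$: one must handle a non-radial second-order Gaussian and, crucially, verify that it converges, which is precisely where the hypothesis $|s|>|t|$ enters. Everything else — reading off the kernel, evaluating on the diagonal, and tracking the exponents of $(|s|^2-|t|^2)$ — is routine bookkeeping.
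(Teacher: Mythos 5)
Your proposal is correct and takes essentially the same route as the paper: both reduce the problem to an explicit Gaussian formula for $\widetilde{\left|T^{(s,t)}\right|^2}(z)$ and then evaluate the resulting $L^{p/2}$ integral by a rescaling together with the Bargmann identity \eqref{eqIC}, with the same admissibility check $|t|<|s|$. The only cosmetic difference is the source of that formula: you restrict the kernel of Lemma~\ref{lemma|T|} to the diagonal via Proposition~\ref{1}, while the paper writes $\widetilde{\left|T^{(s,t)}\right|^2}(z)=e^{-|z|^2}\big\|K_z^{(s,t)}\big\|_2^2$ and invokes the norm formula for $K_z^{(s,t)}$ from \cite{DZ3}; the two expressions coincide, and the remaining computation is identical.
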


\begin{proof}
It is clear from \eqref{eqTKw} that
$$\widetilde{\left|T^{(s,t)}\right|^2}(z)=\left \langle\left(T^{(s,t)}
\right)^*T^{(s,t)}k_z,k_z\right\rangle=e^{-|z|^2}\left\|T^{(s,t)}K_z
\right\|_2^2=e^{-|z|^2}\left\|K_z^{(s,t)}\right\|_2^2.$$
Then a calculation using Corollary 3 of \cite{DZ3} shows that
\begin{align*}
&\left\|\widetilde{\left|T^{(s,t)}\right|^2}(z)\right\|_{L^{{p}/{2}}
(\mathbb{C},dA)}^{{p}/{2}}
=\inc \left\|K_z^{(s,t)} \right\|_2^p e^{-\frac{p}{2}|z|^2}dA(z)\\[8pt]
&=\frac{1}{(|s|^2-|t|^2)^{p/4}}\inc\left|\exp\left[\frac{tp(|t|^2+1-|s|^2)}
{2\overline{s}(|s|^2-|t|^2)}{z^2}\right]\right|\!\exp
\left[\frac{-p(|s|^2-|t|^2-1)|z|^2}{2(|s|^2-|t|^2)}\right]dA(z).
\end{align*}
With the change of variables
$$u=\sqrt{\frac{p(|s|^2-|t|^2-1)}{2(|s|^2-|t|^2)}}\,z$$
and with the help of \eqref{eqIC}, we obtain
\begin{align*}
&\inc
\left|\exp\left[\frac{tp(|t|^2+1-|s|^2)}{2\overline{s}(|s|^2-|t|^2)}{z^2}
\right]\right| \! \exp \left[\frac{-p(|s|^2-|t|^2-1)|z|^2}{2(|s|^2-|t|^2)}\right]dA(z)\\[8pt]
&\qquad=\frac{2\pi(|s|^2-|t|^2)}{p(|s|^2-|t|^2-1)}
\inc\left|e^{-tu^2/(2\overline s)}\right|^2\,d\lambda(u)\\[8pt]
&\qquad=\frac{2\pi(|s|^2-|t|^2)}{p(|s|^2-|t|^2-1)}\frac{|s|}{\sqrt{|s|^2-|t|^2}},
\end{align*}
from which the desired result clearly follows.
\end{proof}

We are now ready to prove the main result of this section.

\begin{theorem}\label{thmSp}
If $|s|^2>|t|^2+1$, then the compact operator $T^{(s,t)}: F^2\to F^2$
belongs to the Schatten class $S_p$ for any $p>0$. Moreover,
$$\|T^{(s,t)}\|_{S_p}^p\le\frac{2|s|}{p(|s|^2-|t|^2-1)
(|s|^2-|t|^2)^{(p-2)/4}},\qquad 0< p\leq2,$$
and
$$\|T^{(s,t)}\|_{S_p}^p\ge\frac{2|s|}{p(|s|^2-|t|^2-1)
(|s|^2-|t|^2)^{(p-2)/4}},\qquad p\geq2.$$
\end{theorem}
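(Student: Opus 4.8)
The plan is to reduce everything to the positive compact operator $A=\bigl|T^{(s,t)}\bigr|^{2}=\bigl(T^{(s,t)}\bigr)^{*}T^{(s,t)}$ and to exploit the exact computation of the $L^{p/2}$-norm of its Berezin transform already furnished by Lemma~\ref{lemsp}. Since $T^{(s,t)}$ is compact, $A$ is a positive compact self-adjoint operator on $F^{2}$; by the spectral theorem I may fix a \emph{complete} orthonormal basis $\{e_{n}\}$ of $F^{2}$ consisting of eigenvectors of $A$, with eigenvalues $\sigma_{n}\ge 0$. The singular values of $T^{(s,t)}$ are then $\lambda_{n}=\sqrt{\sigma_{n}}$, so membership in $S_{p}$ is equivalent to $\{\sigma_{n}\}\in\ell^{p/2}$, and $\|T^{(s,t)}\|_{S_{p}}^{p}=\sum_{n}\sigma_{n}^{p/2}=\|A\|_{S_{p/2}}^{p/2}$. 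Thus it suffices to compare $\|A\|_{S_{q}}^{q}$ (with $q=p/2$) against $\int_{\C}\widetilde A(z)^{q}\,d\nu(z)$, where $d\nu=\frac1\pi\,dA$ and $\widetilde A=\widetilde{|T^{(s,t)}|^{2}}$, because Lemma~\ref{lemsp} evaluates exactly this integral: one checks that $\frac1\pi\bigl\|\widetilde A\bigr\|_{L^{p/2}(\C,dA)}^{p/2}$ equals the common right-hand side $\frac{2|s|}{p(|s|^{2}-|t|^{2}-1)(|s|^{2}-|t|^{2})^{(p-2)/4}}$ appearing in the theorem.

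The heart of the argument is a pair of pointwise Jensen inequalities. Writing $b_{n}(z)=|\langle k_{z},e_{n}\rangle|^{2}=e^{-|z|^{2}}|e_{n}(z)|^{2}$, I would record two normalizations: for each fixed $z$, $\sum_{n}b_{n}(z)=\|k_{z}\|_{2}^{2}=1$ (completeness of $\{e_{n}\}$, via $\sum_{n}|e_{n}(z)|^{2}=K(z,z)$), and for each fixed $n$, $\int_{\C}b_{n}(z)\,d\nu(z)=\|e_{n}\|_{2}^{2}=1$. Expanding $Ak_{z}$ in the eigenbasis gives $\widetilde A(z)=\langle Ak_{z},k_{z}\rangle=\sum_{n}\sigma_{n}b_{n}(z)$, so for each $z$ the value $\widetilde A(z)$ is the average of the $\sigma_{n}$ against the probability weights $\{b_{n}(z)\}_{n}$. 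Applying Jensen's inequality to the convex function $x\mapsto x^{q}$ when $q\ge1$, and to the concave function $x\mapsto x^{q}$ when $0<q\le1$, then integrating against $d\nu$ and using Tonelli together with $\int b_{n}\,d\nu=1$, yields
$$\int_{\C}\widetilde A(z)^{q}\,d\nu(z)\le\|A\|_{S_{q}}^{q}\ \ (q\ge1),\qquad \int_{\C}\widetilde A(z)^{q}\,d\nu(z)\ge\|A\|_{S_{q}}^{q}\ \ (0<q\le1).$$
Translating back via $q=p/2$ gives precisely the lower bound for $p\ge2$ and the upper bound for $0<p\le2$ stated in the theorem; at $p=2$ the two collapse to the trace identity.

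Finally, for the membership claim I would note that the upper bound already proves $\|T^{(s,t)}\|_{S_{p}}<\infty$ for every $p\in(0,2]$, hence for arbitrarily small $p$; since the Schatten classes are nested, $S_{p_{1}}\subseteq S_{p_{2}}$ whenever $0<p_{1}\le p_{2}$, membership in $S_{p}$ for all $p>0$ follows at once. I expect the genuinely delicate points to be bookkeeping rather than conceptual: keeping the three measures $dA$, $d\lambda$ and $d\nu=\frac1\pi\,dA$ straight so that the constant from Lemma~\ref{lemsp} lands exactly on the stated expression, and justifying the interchange of summation and integration (immediate by Tonelli, since every term is nonnegative). The one structural fact that must be invoked carefully is the completeness of the eigenbasis $\{e_{n}\}$, which is what makes $\{b_{n}(z)\}_{n}$ a genuine probability distribution for each $z$ and thereby legitimizes the pointwise use of Jensen's inequality in both directions.
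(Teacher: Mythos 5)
Your proof is correct, and at its core it follows the same strategy as the paper's: both arguments reduce Theorem \ref{thmSp} to the exact computation of $\bigl\|\widetilde{|T^{(s,t)}|^2}\bigr\|_{L^{p/2}(\C,dA)}$ in Lemma \ref{lemsp}, by establishing a pointwise-in-$z$ comparison between $\langle |T^{(s,t)}|^p k_z,k_z\rangle$ and $\langle |T^{(s,t)}|^2 k_z,k_z\rangle^{p/2}$ whose direction flips at $p=2$, integrating over $\C$, and finishing membership for $p>2$ via the inclusion $S_p\subset S_q$. Where you genuinely differ is in the machinery used to justify the two ingredients. The paper never diagonalizes: it reduces to $|T^{(s,t)}|^p\in S_1$ via Theorem 1.26 of \cite{Zhu2}, derives the trace identity $\|T^{(s,t)}\|_{S_p}^p=\int_{\C}\langle |T^{(s,t)}|^p k_z,k_z\rangle e^{|z|^2}\,d\lambda(z)$ using an \emph{arbitrary} orthonormal basis together with Fubini and Parseval, and then cites Lemma 3.4 of \cite{Zhu1} for the operator inequality $\langle A^r x,x\rangle\lessgtr\langle Ax,x\rangle^r$ (for $A\geq0$, $\|x\|=1$, $r=p/2$); these steps are valid for positive operators that need not be compact. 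You instead invoke the spectral theorem for the compact positive operator $A=|T^{(s,t)}|^2$ and prove both ingredients by hand: the trace identity becomes $\sum_n\sigma_n^{p/2}\cdot\frac1\pi\int_{\C} b_n\,dA=\sum_n\sigma_n^{p/2}$, and the operator inequality becomes scalar Jensen applied, for each fixed $z$, to the probability weights $b_n(z)=e^{-|z|^2}|e_n(z)|^2$ — your two normalizations $\sum_n b_n(z)=1$ and $\frac1\pi\int_{\C} b_n\,dA=1$ are exactly what legitimize this. What your route buys is self-containedness (nothing is cited beyond the spectral theorem and Jensen, so you have effectively in-lined proofs of the paper's two references in the compact case); what the paper's route buys is basis-freeness and generality, since compactness is never used in the comparison step itself. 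Your constant bookkeeping is also right: $\frac1\pi\bigl\|\widetilde{|T^{(s,t)}|^2}\bigr\|_{L^{p/2}(\C,dA)}^{p/2}$ equals the stated bound $\frac{2|s|}{p(|s|^2-|t|^2-1)(|s|^2-|t|^2)^{(p-2)/4}}$, so both halves of the theorem land exactly where claimed.
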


\begin{proof}
Fix any $p>0$ and write
$$\left|T^{(s,t)}\right|=\sqrt{\left(T^{(s,t)}\right)^*T^{(s,t)}},\qquad
S=\sqrt{\left|T^{(s,t)}\right|^{{p}}}=\left|T^{(s,t)}\right|^{p/2}.$$
By Theorem 1.26 of \cite{Zhu2}, $T^{(s,t)}$ belongs to $S_p$
if and only if $\left|T^{(s,t)}\right|^{p}\in{S_1}$. Furthermore, for
any orthonormal basis $\{e_n\}$ of $F^2$, we have
$$\|T^{(s,t)}\|_{S_p}^p=\sum_{n=1}^\infty\left \langle \left|T^{(s,t)}
\right|^{p} e_n,e_n\right\rangle=\sum_{n=1}^\infty\inc\left|
\left\langle S e_n,K_z\right\rangle\right|^2\,d\lambda(z).$$
By Fubini's theorem and Parseval's identity, we obtain
\begin{align*}
\|T^{(s,t)}\|_{S_p}^p&=\inc \sum_{n=1}^\infty
\left|\left \langle S K_z, e_n\right\rangle\right|^2d\lambda(z)
=\inc\left\|SK_z\right\|_2^2d\lambda(z)\\
&=\inc\left\langle\left|T^{(s,t)}\right|^{p} k_z,k_z\right\rangle
e^{|z|^2}d\lambda(z).
\end{align*}
If $0< p\leq2$, it follows from Lemma 3.4 of \cite{Zhu1} that
$$\|T^{(s,t)}\|_{S_p}^p\leq\inc\left\langle \left|T^{(s,t)}
\right|^2 k_z,k_z\right\rangle^{p/2} e^{|z|^2}d\lambda(z)
=\frac{1}{\pi}\left\|\widetilde{\left|T^{(s,t)}\right|^2}(z)
\right\|_{L^{{p}/{2}}(\mathbb{C},dA)}^{{p}/{2}}.$$
Combining this with Lemma~\ref{lemsp}, we see that $T^{(s,t)}$
belongs to $S_p$ for $0< p\leq2$. Since $S_p\subset S_q$ for
$0<p\le q<\infty$, we must have $T^{(s,t)}\in S_p$ for $p>2$
as well.

If $p\geq2$, it follows from Lemma 3.4 of \cite{Zhu1} again that
\begin{align*}
\|T^{(s,t)}\|_{S_p}^p\geq\inc\left\langle\left|T^{(s,t)}
\right|^2 k_z,k_z\right\rangle^{p/2}e^{|z|^2}d\lambda(z)
=\frac{1}{\pi}\left\|\widetilde{\left|T^{(s,t)}\right|^2}(z)
\right\|_{L^{{p}/{2}}(\C,dA)}^{{p}/{2}}.
\end{align*}
This completes the proof of the theorem.
\end{proof}

As a direct consequence of Theorem \ref{thmSp}, we obtain
the following result which was also obtained in \cite{DZ3}. The
proof here is different.

\begin{corollary}\label{corS2}
Let $(s, t)\in\C\times\C$ with $|s|^2-|t|^2>1$. Then the operator
$T^{(s,t)}: F^2\to F^2$ is Hilbert-Schmidt with
$$\|T^{(s,t)}\|_{S_2}^2=\frac{|s|}{{|s|^2-|t|^2-1}}.$$
\end{corollary}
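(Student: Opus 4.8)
The plan is to obtain this as an immediate specialization of Theorem~\ref{thmSp} to the single exponent $p=2$. The crucial structural observation is that the two estimates proved there point in opposite directions: the upper bound is asserted for the range $0<p\le2$, while the lower bound is asserted for $p\ge2$. Since $p=2$ lies in both ranges at once, the two inequalities must coincide and force an exact equality at that value, namely
$$\|T^{(s,t)}\|_{S_2}^2=\frac{2|s|}{2(|s|^2-|t|^2-1)(|s|^2-|t|^2)^{(2-2)/4}}.$$
Membership in $S_2$ (the Hilbert--Schmidt class) is already guaranteed by Theorem~\ref{thmSp} under the hypothesis $|s|^2>|t|^2+1$, which is exactly the standing assumption $|s|^2-|t|^2>1$, so nothing further is needed on that point.

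What remains is only to simplify the constant. First I would note that the exponent $(p-2)/4$ vanishes at $p=2$, so that the factor $(|s|^2-|t|^2)^{(p-2)/4}$ becomes $(|s|^2-|t|^2)^0=1$ and drops out. Then the factor $p=2$ appearing in the denominator cancels the factor $2$ in the numerator, leaving precisely
$$\|T^{(s,t)}\|_{S_2}^2=\frac{|s|}{|s|^2-|t|^2-1},$$
as claimed. There is essentially no obstacle here: the only thing to verify is this routine arithmetic reduction of the constant, all the substantive work having been carried out already in Theorem~\ref{thmSp} and, beneath it, in Lemma~\ref{lemsp}.

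If one prefers a self-contained derivation, the same conclusion can be reached by rerunning the computation at $p=2$ from scratch. In that case the identity
$$\|T^{(s,t)}\|_{S_2}^2=\inc\left\langle\left|T^{(s,t)}\right|^2 k_z,k_z\right\rangle e^{|z|^2}\,d\lambda(z)$$
holds with genuine equality, with no appeal to the operator inequality in Lemma~3.4 of \cite{Zhu1}, since the exponent $p/2=1$ renders that step an identity. The right-hand side equals $\frac{1}{\pi}\|\widetilde{|T^{(s,t)}|^2}\|_{L^1(\C,dA)}$ after writing $e^{|z|^2}\,d\lambda(z)=\frac{1}{\pi}\,dA(z)$, and this $L^1$ norm is evaluated directly by Lemma~\ref{lemsp} with $p=2$, yielding the same value $\frac{|s|}{|s|^2-|t|^2-1}$.
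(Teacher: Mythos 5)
Your proposal is correct and matches the paper's approach exactly: the paper states this corollary as ``a direct consequence of Theorem~\ref{thmSp}'', i.e.\ precisely the observation that at $p=2$ the upper bound (valid for $0<p\le 2$) and the lower bound (valid for $p\ge 2$) coincide and force equality, after which the constant simplifies to $|s|/(|s|^2-|t|^2-1)$. Your optional self-contained rerun at $p=2$ via Lemma~\ref{lemsp} is also sound, but it is not needed beyond the specialization argument.
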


Finally in this section we obtain the following trace formula for $T^{(s,t)}$.

\begin{theorem}
If $|s|^2>|t|^2+1$, then
$$\tr\left(T^{(s,t)}\right)=
\frac{1}{\sqrt{s}}\sqrt{\frac{s^2}{(s-1)^2+|t|^2}}.$$
\end{theorem}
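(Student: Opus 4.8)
The plan is to compute the trace by integrating the Berezin transform of $T^{(s,t)}$ against area measure. Since $|s|^2>|t|^2+1$, Theorem~\ref{thmSp} (applied with $p=1$) guarantees that $T^{(s,t)}$ is trace class, so $\tr(T^{(s,t)})$ is well defined. The key tool is the identity
$$\tr(T)=\frac1\pi\inc\langle Tk_z,k_z\rangle\,dA(z),$$
valid for any trace-class operator $T$ on $F^2$. To establish it I would start from the resolution of the identity $f=\frac1\pi\inc\langle f,k_z\rangle k_z\,dA(z)$, which follows from the reproducing property since $\langle f,k_z\rangle k_z(w)=f(z)e^{-|z|^2}e^{w\overline z}$ integrates back to $f(w)$; I would then expand $\langle Te_n,e_n\rangle$ through this identity for an orthonormal basis $\{e_n\}$ of $F^2$, interchange the sum and the integral by Fubini (justified by trace-class membership), and collapse $\sum_n\langle Tk_z,e_n\rangle\langle e_n,k_z\rangle$ to $\langle Tk_z,k_z\rangle$ via Parseval.

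With this in hand I would insert the explicit Berezin transform recorded just before Section~3,
$$\langle T^{(s,t)}k_z,k_z\rangle=\frac1{\sqrt s}\exp\left[\frac{t}{2s}z^2-\frac{\overline t}{2s}\overline z^2+\left(\frac1s-1\right)|z|^2\right],$$
so that the trace becomes a single complex Gaussian integral
$$\tr(T^{(s,t)})=\frac1{\pi\sqrt s}\inc\exp\left[\frac{t}{2s}z^2-\frac{\overline t}{2s}\overline z^2+\left(\frac1s-1\right)|z|^2\right]dA(z).$$
Absolute convergence here is not an issue: the estimate on the Berezin transform in Theorem~\ref{thmTKzw} shows the integrand decays like $\exp(-c|z|^2)$ with $c>0$ whenever $|s|^2>|t|^2+1$, since then $|s|>1$ and hence $|s|^2+\re(s)>0$.

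To evaluate the integral I would treat it as a genuine two-dimensional Gaussian: writing $z=x+iy$ turns the exponent into a real quadratic form in $(x,y)$, and the elementary identity
$$\inc\exp\left[Az^2+B\overline z^2+C|z|^2\right]dA(z)=\frac{\pi}{\sqrt{C^2-4AB}}$$
(valid when the quadratic form has negative-definite real part) reduces everything to algebra. With $A=\tfrac{t}{2s}$, $B=-\tfrac{\overline t}{2s}$ and $C=\tfrac1s-1$ one finds $C^2-4AB=\big((s-1)^2+|t|^2\big)/s^2$, and combining this with the prefactor $\tfrac1{\sqrt s}$ produces exactly $\frac1{\sqrt s}\sqrt{\frac{s^2}{(s-1)^2+|t|^2}}$. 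Alternatively, after rewriting $dA=\pi e^{|z|^2}\,d\lambda$ one can absorb the $|z|^2$-term into the Gaussian measure and appeal to \eqref{eqIC} directly.

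The one genuinely delicate point is the branch of the square root: since $s$ and $t$ are complex, both $C^2-4AB$ and $s^2$ are complex numbers, and I must verify that the Gaussian formula selects the branch yielding $\sqrt{s^2/((s-1)^2+|t|^2)}$ rather than its negative under the convention $\sqrt z=\sqrt{|z|}\,e^{i\theta/2}$, $\theta\in(-\pi,\pi]$. I expect to settle this by a continuity argument: both sides of the asserted formula are analytic in $(s,t)$ on the connected region $|s|^2>|t|^2+1$, and they agree at the real reference point $t=0$, $s>1$, where every quantity is positive and the computation is transparent (there $\tr(T^{(s,t)})=\tfrac1{\sqrt s}\cdot\tfrac{s}{s-1}$); the identity theorem then propagates the equality and pins down the branch throughout. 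This sign/branch bookkeeping, rather than the Gaussian integration itself, is where I anticipate the main effort.
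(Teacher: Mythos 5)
Your route is the same as the paper's: represent $\tr\left(T^{(s,t)}\right)$ as $\frac1\pi\inc\langle T^{(s,t)}k_z,k_z\rangle\,dA(z)$, insert the explicit Berezin transform recorded before Section 3, and evaluate the resulting two-dimensional complex Gaussian; your determinant $C^2-4AB=\left((s-1)^2+|t|^2\right)/s^2$ is exactly the paper's $\det A$. Your convergence argument via Theorem~\ref{thmTKzw} (using $|s|^2>|t|^2+1\Rightarrow|s|>1\Rightarrow|s|^2+\re(s)>0$) is a legitimate, and arguably slicker, substitute for the paper's direct verification that $\re(A)$ is positive definite.

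The genuine weak point is exactly where you predicted, and your proposed repair does not work as stated. You invoke the identity theorem for functions ``analytic in $(s,t)$ on the connected region $|s|^2>|t|^2+1$,'' but neither side is holomorphic in $t$: the dependence on $t$ is through $\overline t$ and $|t|^2$, so agreement on the slice $t=0$ cannot be propagated in the $t$ direction by any identity theorem (for merely real-analytic functions, agreement on a slice proves nothing). The fix is easy but must be said: the substitution $z\mapsto e^{i\theta}z$ in the trace integral shows that both sides depend on $t$ only through $\tau=|t|$; then, for each fixed $\tau\ge0$, cancel the common factor $1/\sqrt s$ (whose jump across the negative real axis is shared by both sides), check that $s^2/\left((s-1)^2+\tau^2\right)$ omits $(-\infty,0]$ on the region $\{|s|^2>\tau^2+1\}$ so that the principal root is holomorphic in $s$ there, and verify equality on the real ray $s>\sqrt{1+\tau^2}$, where the integral splits into iterated one-variable Gaussians (the imaginary cross term $2i(\tau/s)xy$ is removed by completing the square) and every square root is of a positive number. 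The identity theorem in the single variable $s$, on this connected region, then finishes the proof. This is precisely what the paper's citation of Bargmann's formula (1.1a) accomplishes in one stroke: that formula fixes the branch of $(\det A)^{-1/2}$ for complex symmetric matrices with positive definite real part. Finally, your parenthetical alternative of absorbing the $|z|^2$ term into $d\lambda$ and ``appealing to \eqref{eqIC} directly'' does not go through: \eqref{eqIC} allows no $|w|^2$ term in the integrand, and a complex rescaling of $z$ changes the coefficient of $|z|^2$ only by a positive factor, so the residual coefficient $1/s$ cannot be normalized away.
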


\begin{proof}
By Proposition 3.3 of \cite{Zhu1},
$$\tr\left(T^{(s,t)}\right)=\frac1{\pi}\inc\langle Tk_z, k_z\rangle\,dA(z).$$
It then follows from Proposition \ref{propBT} that
\begin{align*}
\tr\left(T^{(s,t)}\right)
&=\frac1{\pi\,\sqrt s}\inc\exp\left[\frac{tz^2}{2s}-\frac{\overline{tz^2}}{2s}
+\frac{1-s}s|z|^2\right]\,dA(z).
\end{align*}
If we write $z=x+iy$, then
\begin{multline}\label{eqItr}
\tr\left(T^{(s,t)}\right)\\
=\frac{1}{\pi\sqrt{s}}\inr\inr\exp\left[\frac{t-\overline{t}-2s+2}{2s}x^2+
\frac{-t+\overline{t}-2s+2}{2s}y^2+\frac{t+\overline{t}}{s}ixy\right]\,dxdy.
\end{multline}
Consider the matrix
$$A=\begin{bmatrix}\displaystyle -\frac{t-\overline{t}-2s+2}{2s} &
\displaystyle -\frac{t+\overline{t}}{2s}i\\[10pt]
\displaystyle -\frac{t+\overline{t}}{2s}i &
\displaystyle -\frac{-t+\overline{t}-2s+2}{2s}
\end{bmatrix}$$
and its real part
$$\re(A)=\begin{bmatrix}
\displaystyle\frac{|s|^2-\im t \im s-\re s}{|s|^2} &
\displaystyle -\frac{\re t\im s}{|s|^2}\\[10pt]
\displaystyle -\frac{\re t\im s}{|s|^2} &
\displaystyle\frac{|s|^2+\im t \im s-\re s}{|s|^2}
\end{bmatrix}.$$
Elementary calculations show that
\begin{align*}
\frac{|s|^2-(\im t \im s+\re s)}{|s|^2}
&>\frac{|s|^2-|\re s|-\sqrt{|s|^2-1} \, |\im s|}{|s|^2}\\
&=\frac{(|\re s|-1)^2}{|s|^2-|\re s|+\sqrt{|s|^2-1} |\im s|}\geq0,
\end{align*}
and
\begin{align*}
\det\re(A)&=\frac{(|s|^2-\re s)^2-(\im s)^2|t|^2}{|s|^4}\\
&>\frac{|s|^4+(\re s)^2-2\re s |s|^2-(\im s)^2(|s|^2-1)}{|s|^4}\\
&=\frac{(\re s-1)^2}{|s|^2}\geq0.
\end{align*}
So the matrix $\re (A)$ ia positive definite, and hence the integral
in \eqref{eqItr} is absolutely convergent. Since
$$\det\, A= \frac{(s-1)^2+|t|^2}{s^2},$$
the desired result then follows from (1.1a) of \cite{B1}.
\end{proof}

\section{Singular values of canonical integral operators}

In this section, we will compute the singular values of the operator
$T^{(s,t)}$ when it is compact. We begin with the following
identification of a special eigenvalue for the operator
$\left|T^{(s,t)}\right|^2$.

\begin{lemma}\label{existenceofgamma}
Let $(s, t)\in\C\times\C$ with $|s|^2>|t|^2+1$. Then
\begin{itemize}
\item[(a)] The equation
\begin{equation}\label{choiceofgamma}
\overline{s t}\gamma^2+(|s|^2+|t|^2+1)\gamma+st=0
\end{equation}
has a unique solution $\gamma$ in the unit disk $\D$.
\item[(b)] For this $\gamma\in\D$ the number
$$\lambda_0=\frac{|s|}{|s|^2+\gamma\overline{st} }$$
is positive.
\item[(c)] The number $\lambda_0$ above is an eigenvalue of
$\left|T^{(s,t)}\right|^2$ and the function $e^{\gamma z^2/2}$ is
a corresponding eigenfunction.
\end{itemize}
\end{lemma}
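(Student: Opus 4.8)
The three parts are tightly linked, so the natural plan is to test the explicit integral formula for $\left|T^{(s,t)}\right|^2$ from Lemma~\ref{lemma|T|} against the Gaussian trial function $e^{\gamma z^2/2}$ and let the resulting eigenvalue equation dictate what $\gamma$ and $\lambda_0$ must be; this simultaneously motivates the quadratic \eqref{choiceofgamma} and the formula for $\lambda_0$. I would prove (c) by this direct computation, invoking (a) only to guarantee that the trial function actually lies in $F^2$ (which forces $|\gamma|<1$) and (b) to confirm that the eigenvalue of the positive operator $\left|T^{(s,t)}\right|^2$ is indeed positive.

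For part (a) I would read \eqref{choiceofgamma} as a quadratic in $\gamma$ with leading coefficient $\overline{st}$, constant term $st$, and real middle coefficient $|s|^2+|t|^2+1$. Its discriminant $(|s|^2+|t|^2+1)^2-4|st|^2$ is real and, after expansion, equals $(|s|^2-|t|^2)^2+2(|s|^2+|t|^2)+1>0$; one checks it also equals $(|s|^2-|t|^2-1)^2+4|s|^2$, matching the constant in Theorem~A. The product of the two roots has modulus $|st/\overline{st}|=1$, so their moduli multiply to $1$. The key elementary estimate $\bigl(b-2|st|\bigr)^2<D$ (with $b=|s|^2+|t|^2+1$, $D$ the discriminant), which follows from $b>2|st|$ by the arithmetic--geometric mean inequality, then forces the $+$ root to satisfy $|\gamma|<1$ and the other to satisfy $|\gamma|>1$; hence exactly one root lies in $\D$. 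The degenerate case $t=0$ gives $\gamma=0$ directly.

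Part (b) is then immediate from the explicit root: $\gamma\overline{st}=\tfrac12\bigl(-(|s|^2+|t|^2+1)+\sqrt{D}\bigr)$ is real, so $|s|^2+\gamma\overline{st}=\tfrac12\bigl(|s|^2-|t|^2-1+\sqrt{D}\bigr)$, which is strictly positive under the compactness hypothesis $|s|^2>|t|^2+1$; therefore $\lambda_0>0$.

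The substance is part (c). Plugging $f(u)=e^{\gamma u^2/2}$ into the formula of Lemma~\ref{lemma|T|} leaves a Gaussian integral in $u$, which I would evaluate with \eqref{eqIC}; its hypothesis $|\gamma+\delta|^2<4$ is easily checked, since $|\gamma|<1$ and the anti-holomorphic coefficient $\delta=(|t|^2+1-|s|^2)t/[\overline{s}(|s|^2-|t|^2)]$ has modulus below $|t|/|s|<1$. The outcome has the shape $(\text{constant})\cdot e^{(\cdots)z^2/2}$, so the eigenvalue equation $\left|T^{(s,t)}\right|^2 e^{\gamma z^2/2}=\lambda_0\, e^{\gamma z^2/2}$ splits into two scalar identities: matching the coefficient of $z^2$ in the exponent, and matching the multiplicative constant. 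The main obstacle is the first identity; I expect it to collapse, after clearing denominators and writing $\rho:=\gamma\overline{st}$, to the relation $\rho^2+(|s|^2+|t|^2+1)\rho+|s|^2|t|^2=0$, which is exactly \eqref{choiceofgamma} multiplied by $\overline{st}$. The constant-matching identity should then reduce, using the same quadratic relation for $\rho$, to $(|s|^2-|t|^2)(1-\gamma\overline{\delta})=(|s|^2+\rho)^2/|s|^2$, yielding precisely $\lambda_0=|s|/(|s|^2+\gamma\overline{st})$. Both reductions are purely algebraic consequences of \eqref{choiceofgamma}, so once the bookkeeping is organized around $\rho$ the verification is routine.
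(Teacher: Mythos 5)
Your proposal is correct and takes essentially the same route as the paper: parts (a) and (b) via the quadratic formula and explicit root analysis (your product-of-roots observation together with the estimate $(b-2|st|)^2<D$ is exactly the ``easy to check'' step the paper leaves implicit), and part (c) by feeding the Gaussian trial function into the integral formula of Lemma~\ref{lemma|T|}, evaluating with \eqref{eqIC}, and matching the $z^2$-coefficient and the multiplicative constant. Both of your claimed algebraic collapses do hold --- the paper organizes the same computation through the auxiliary identity \eqref{eqsgamma} rather than your substitution $\rho=\gamma\overline{st}$ --- and your explicit verification of the hypothesis $|\gamma+\delta|^2<4$ for \eqref{eqIC} is a detail the paper glosses over.
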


\begin{proof}
If $t=0$, it is clear that the equation in (\ref{choiceofgamma})
has a unique solution $\gamma=0\in\D$ and $\lambda_0=1$.

If $t\neq0$, then by the quadratic formula, the solutions of
(\ref{choiceofgamma}) are given by
$$\gamma_{\pm}=\frac{-(|s|^2+|t|^2+1)\pm
\sqrt{(|s|^2+|t|^2+1)^2-4|s|^2|t|^2}}{2\overline{st}}.$$
Note that
$$(|s|^2+|t|^2+1)^2-4|s|^2|t|^2=(|s|^2-|t|^2-1)^2+4|s|^2>0.$$
It is easy to check that the solution
$$\gamma_+=\frac{-(|s|^2+|t|^2+1)+\sqrt{(|s|^2+|t|^2+1)^2
-4|s|^2|t|^2}}{2\overline{st}}$$
satisfies
$|\gamma_+|<1$ and
$$|s|^2+\gamma_+\overline{st}=\frac{(|s|^2-|t|^2-1)+
\sqrt{(|s|^2-|t|^2-1)^2+4|s|^2}}{2}>|s|.$$
It is also easy to check that the other solution
$$\gamma_-=\frac{-(|s|^2+|t|^2+1)-\sqrt{(|s|^2+|t|^2+1)^2
-4|s|^2|t|^2}}{2\overline{st}}$$
satisfies $|\gamma_-|>1$. Therefore, the equation in
(\ref{choiceofgamma}) has a unique solution $\gamma=\gamma_+$
in $\D$ with the corresponding $\lambda_0\in(0,1)$.

We now consider the function $f(z)=e^{\gamma z^2/2}\in F^2$.
From \eqref{choiceofgamma} we easily check that
\begin{equation}\label{eqsgamma}
\left(|s|^2+\gamma{\overline{st}}\right)^2
={|s|^2(|s|^2-|t|^2)-\gamma{\overline{st}}(|t|^2+1-|s|^2)}.
\end{equation}
Combining this with Lemma~\ref{lemma|T|} and \eqref{eqIC},
we conclude that
\begin{align*}
\left|T^{(s,t)}\right|^2 f(z)
 &=\frac{1}{\sqrt{|s|^2-|t|^2}}\exp\left[\frac{t(|t|^2+1-|s|^2)}{2\overline{s}(|s|^2-|t|^2)}\,z^2\right]\\[8pt]
&\quad\cdot\,\inc\exp\left[\frac{\gamma u^2}{2}+\frac{z\overline{u}}{{|s|^2-|t|^2}}+
\frac{\bar t(|t|^2+1-|s|^2)}{2s(|s|^2-|t|^2)}\,\overline u^2\right] d\lambda(u)\\
&=\lambda_0\,\exp\left[\frac{st(|t|^2+1-|s|^2)}
{2|s|^2(|s|^2-|t|^2)}\,z^2+\frac{|s|^2 \gamma}{2(|s|^2-|t|^2)\left(|s|^2
+\gamma{\overline{st}}\right)^2}\,z^{2}\right].
\end{align*}
Using \eqref{eqsgamma} again, we have
\begin{align*}
&\frac{st(|t|^2+1-|s|^2)}{|s|^2(|s|^2-|t|^2)}+\frac{|s|^2\gamma}
{(|s|^2-|t|^2)\left(|s|^2+\gamma{\overline{st}}\right)^2}\\
&\qquad=\frac{(|t|^2+1-|s|^2)\left[st(|s|^2-|t|^2)-(|t|^2+1-|s|^2)|t|^2
\gamma\right]+|s|^2 \gamma}{(|s|^2-|t|^2)\left(|s|^2+
\gamma{\overline{st}}\right)^2}\\
&\qquad=\frac{st(|t|^2+1-|s|^2)+\left(|t|^4+2|t|^2+1-|s|^2|t|^2\right)\gamma}{\left(|s|^2+\gamma{\overline{st}}\right)^2}.
\end{align*}
A calculation using \eqref{choiceofgamma} shows that
\begin{multline*}
st(|t|^2+1-|s|^2)+(|t|^4+2|t|^2+1-|s|^2|t|^2)\gamma\\
=\gamma\left[|s|^2(|s|^2-|t|^2)-\gamma{\overline{st}}
(|t|^2+1-|s|^2)\right].
\end{multline*}
From \eqref{eqsgamma} we then deduce that
$$\left|T^{(s,t)}\right|^2 f(z)=\lambda_0\,f(z).$$
This completes the proof of the lemma.
\end{proof}

In order to obtain other eigenvalues of the operator $|T^{(s,t)}|^{2}$
(when it is compact), we will need the following critical result.

\begin{lemma}\label{lempolynom}
Let $(s, t)\in\C\times\C$ with $|s|^2>|t|^2+1$ and let $\gamma$ be
the unique number in $\D$ from Lemma~\ref{existenceofgamma}.
Then, for any polynomial $P_n(x)$ of degree $n$ and any complex
numbers $\nu$ and $b$ satisfying $\re\nu>0$ and
$|2\nu b^2-\gamma|<1$, we have
\begin{multline*}
\left|T^{(s,t)}\right|^2\left[e^{\frac{\gamma}{2}z^2}
\inr  P_n(x)\exp\left[-\nu\left(x-bz\right)^2\right]dx\right]\\[8pt]
=Ce^{\frac{\gamma}{2}z^2}\inr P_n(x)
\exp\left[-\frac{(|s|^2+\gamma{\overline{st}})^{2}\nu
\left(x-\frac{b|s|^2}{(|s|^2+\gamma{\overline{st}})^2}z\right)^2}
{(|s|^2+\gamma{\overline{st}})^2+2\nu b^2{\overline{st}}
(|t|^2+1-|s|^2)}\right]dx,
\end{multline*}
where
$$C=\frac{|s|}{\sqrt{(|s|^2+\gamma
\overline{st})^2+2\nu b^2{\overline{st}}(|t|^2+1-|s|^2)}}$$
with $\re C>0$.
\end{lemma}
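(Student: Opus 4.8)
The plan is to compute the left-hand side directly from the explicit integral formula for $\left|T^{(s,t)}\right|^2$ in Lemma~\ref{lemma|T|}. First I would insert the inner representation $g(u)=e^{\gamma u^2/2}\inr P_n(x)\exp[-\nu(x-bu)^2]\,dx$, expand $-\nu(x-bu)^2=-\nu x^2+2\nu bxu-\nu b^2u^2$, and use Fubini's theorem to perform the $u$-integral over $\C$ before the $x$-integral over $\R$. The hypothesis $\re\nu>0$ gives absolute convergence of the $x$-integral and justifies Fubini, while $|s|^2>|t|^2+1$ forces $|s|^2-|t|^2>1$ and places $\gamma$ in $\D$ through Lemma~\ref{existenceofgamma}; note also that $g$ is $e^{\gamma z^2/2}$ times a polynomial of degree $n$, hence lies in $F^2$.

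For each fixed $x$ the $u$-integral is Gaussian of the exact shape handled by \eqref{eqIC}: the coefficient of $u^2$ is $\tfrac12(\gamma-2\nu b^2)$, the coefficient of $u$ is $2\nu bx$, the coefficient of $\overline u^2$ is $\tfrac{(|t|^2+1-|s|^2)\overline t}{2s(|s|^2-|t|^2)}$, and the coefficient of $\overline u$ is $z/(|s|^2-|t|^2)$. Here the hypothesis $|2\nu b^2-\gamma|<1$ is precisely the condition needed to apply \eqref{eqIC} (and is also what makes $g\in F^2$). Evaluating \eqref{eqIC} yields a prefactor times an exponential whose exponent is a quadratic form in $x$ and $z$; collecting terms leaves $\inr P_n(x)\exp[\alpha_2x^2+\alpha_{11}xz+\alpha_0z^2]\,dx$, with the original polynomial $P_n(x)$ untouched (the transformation acts only on the Gaussian weight, which is why the same $P_n$ survives). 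Completing the square in $x$ rewrites this as $\inr P_n(x)\exp[-\nu'(x-b'z)^2]\,dx$ multiplied by a pure $z^2$ factor; reading off the parameters is then bookkeeping, and one checks directly that $b'=b|s|^2/(|s|^2+\gamma\overline{st})^2$ and that $\nu'$ equals the value claimed in the statement.

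The main obstacle is to show that, after completing the square, the total coefficient of $z^2$ collapses to exactly $\gamma/2$, so that the surviving exponential is $e^{\gamma z^2/2}$. This is the mechanism already used in Lemma~\ref{existenceofgamma}, but I must now track the extra $x$-dependent Gaussian. Writing $m=\nu b^2$, the required identity is polynomial of degree one in $m$ (the only $m$-dependence enters through the normalizing factor of \eqref{eqIC}, which is affine in $m$), so it separates into an $m$-independent part and an $m$-linear part. The $m$-independent part is exactly the $z^2$-identity verified in Lemma~\ref{existenceofgamma}, while the $m$-linear part is new; using the defining relation \eqref{choiceofgamma} in the form $\overline{st}\,\gamma^2=-(|s|^2+|t|^2+1)\gamma-st$ together with its consequence \eqref{eqsgamma}, I would reduce both parts to the single polynomial identity in $\sigma=|s|^2$ and $D=|s|^2-|t|^2$ obtained by setting $\kappa=1-D$ and $|t|^2=\sigma-D$. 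The crucial cancellation is that \eqref{choiceofgamma} converts the stray factor $t/\overline s$ coming from the $z^2$-term of the kernel into the factor $\overline{st}$ appearing elsewhere, without which the $z^2$-coefficient would not simplify.

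Finally, the same relation \eqref{eqsgamma} identifies the prefactor as the stated constant $C$; the sign claim $\re C>0$ follows from the principal-branch convention once one observes that the radicand equals $D\sigma$ times the normalizing factor $1-(\gamma-2\nu b^2)\tfrac{(|t|^2+1-|s|^2)\overline t}{s(|s|^2-|t|^2)}$ of \eqref{eqIC}, which has positive real part because $|\gamma-2\nu b^2|<1$ and the second factor has modulus $(|s|^2-|t|^2-1)|t|/[(|s|^2-|t|^2)|s|]<1$. A last routine point is to confirm $\re\nu'>0$, guaranteeing convergence of the output integral.
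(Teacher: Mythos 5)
Your proposal is correct and follows essentially the same route as the paper's proof: plug the function into the explicit formula of Lemma~\ref{lemma|T|}, apply Fubini and the Gaussian identity \eqref{eqIC} to the $u$-integral with exactly the coefficients you list, and then use \eqref{choiceofgamma}, \eqref{eqsgamma}, and the eigenvalue identity from Lemma~\ref{existenceofgamma} to show the residual $z^2$-coefficient collapses to $\gamma/2$, with $\re C>0$ coming from the same smallness estimates $|2\nu b^2-\gamma|<1$ and $|t|(|s|^2-|t|^2-1)<|s|(|s|^2-|t|^2)$. The only cosmetic difference is organizational: the paper leaves the square uncompleted, writes the output as $Ce^{\sigma z^2/2}$ times an integral in the variables $x^2$ and $xz$, and verifies $\gamma-\sigma=2\nu'b'^2$ directly, whereas you complete the square first and split the resulting identity into its constant and linear parts in $m=\nu b^2$ --- the same algebra either way.
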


\begin{proof}
Note that $\re\nu>0$ and $\gamma\in\D$. It is clear from Lemma 7
of \cite{DZ3} that the function
$$z\mapsto e^{\frac{\gamma}{2}z^2}\inr P_n(x)
\exp\left[-\nu\left(x-bz\right)^2\right]\,dx$$
is in $F^2$. Since $|2\nu b^2-\gamma|<1$ and $|t|<|s|$,
it follows from \eqref{eqsgamma} that
\begin{multline}\label{eqrenu}
\re\left[(|s|^2+\gamma{\overline{st}})^2+2\nu b^2{\overline{st}}
(|t|^2+1-|s|^2)\right]\\
=|s|^2(|s|^2-|t|^2)-(|s|^2-|t|^2-1)\re\left[{\overline{st}}
(2\nu b^2-\gamma)\right]\geq |s|^2,
\end{multline}
which implies that $\re C>0$. Moreover, a calculation using \eqref{eqsgamma}
and \eqref{eqIC} shows that
$$1-(\gamma- 2\nu b^2)
\frac{\bar t(|t|^2+1-|s|^2)}{s(|s|^2-|t|^2)}
=\frac{(|s|^2+\gamma{\overline{st}})^2+2\nu
b^2{\overline{st}}(|t|^2+1-|s|^2)}{|s|^2(|s|^2-|t|^2)}$$
and
\begin{multline*}
\inc\exp\left[\frac{(\gamma- 2\nu b^2) w^2 }{2}+2\nu bxw+
\frac{\bar t(|t|^2+1-|s|^2)}{2s(|s|^2-|t|^2)}\overline{w}^2+
\frac{ z\overline{w}}{|s|^2-|t|^2}\right]\,d\lambda(w)\\[8pt]
=C\sqrt{|s|^2-|t|^2}\exp\left[\frac{2\overline{st}(|t|^2+1-|s|^2)
\nu^2 b^2x^2+2|s|^2\nu b x z+\frac{|s|^2(\gamma- 2\nu b^2)}
{2(|s|^2-|t|^2)}z^2}{(|s|^2+\gamma{\overline{st}})^2+2\nu
b^2{\overline{st}}(|t|^2+1-|s|^2)}\right].
\end{multline*}
Then we use Lemma~\ref{lemma|T|}, Fubini's theorem, and the above
equation to obtain
\begin{align}\label{eq|T|P}
&\left|T^{(s,t)}\right|^2\left[e^{\frac{\gamma}{2}z^2}
\inr P_n(x)\exp\left[-\nu\left(x-bz\right)^2\right]\,dx\right]\nonumber\\
    &=\frac{1}{\sqrt{|s|^2-|t|^2}}\exp\left[\frac{t(|t|^2+1-|s|^2)}{2\overline{s}(|s|^2-|t|^2)}\,z^2\right]\nonumber\\
   & \qquad \cdot \inc e^{\frac{\gamma}{2}w^2+\frac{z\overline{w}}{{|s|^2-|t|^2}}+
\frac{\bar t(|t|^2+1-|s|^2)}{2s(|s|^2-|t|^2)}\,\overline w^2}\,d\lambda(w)\inr P_n(x)e^{-\nu\left(x-bw\right)^2}\,dx\nonumber\\
&= Ce^{\frac{\sigma}{2}z^2}\inr P_n(x)\exp\left[-\frac{(|s|^2+
\gamma{\overline{st}})^2\nu x^2-2|s|^2\nu b x z}{(|s|^2+
\gamma{\overline{st}})^2+2\nu b^2{\overline{st}}
(|t|^2+1-|s|^2)}\right]\,dx,
\end{align}
where
$$\sigma=\frac{st(|t|^2+1-|s|^2)}{|s|^2(|s|^2-|t|^2)}+
\frac{|s|^2(\gamma- 2\nu b^2)}{(|s|^2-|t|^2)\left[(|s|^2+\gamma
{\overline{st}})^2+2\nu b^2{\overline{st}}(|t|^2+1-|s|^2)\right]}.$$

Recall from the proof of Lemma~\ref{existenceofgamma} that
$$\frac{st(|t|^2+1-|s|^2)}{|s|^2(|s|^2-|t|^2)}
 +\frac{|s|^2 \gamma }{(|s|^2-|t|^2)\left(|s|^2+
 \gamma{\overline{st}}\right)^2}={\gamma}.$$
By \eqref{eqsgamma}, we have
\begin{align*}
(|s|^2-|t|^2)(\gamma-\sigma)&=\frac{|s|^2(2\nu b^2-\gamma)}{(|s|^2+\gamma{\overline{st}})^2+2\nu b^2{\overline{st}}(|t|^2+1-|s|^2)}+\frac{|s|^2 \gamma }{(|s|^2+\gamma{\overline{st}})^2}\\
&=\frac{2\nu b^2|s|^2\left[(|s|^2+\gamma{\overline{st}})^2+
\gamma{\overline{st}}(|t|^2+1-|s|^2)\right] }{\left[(|s|^2+
\gamma{\overline{st}})^2+2\nu b^2{\overline{st}}(|t|^2+1-|s|^2)
\right](|s|^2+\gamma{\overline{st}})^2}\\[8pt]
&=\frac{2\nu b^2|s|^4(|s|^2-|t|^2) }{\left[(|s|^2+
\gamma{\overline{st}})^2+2\nu b^2{\overline{st}}
(|t|^2+1-|s|^2)\right](|s|^2+\gamma{\overline{st}})^2}.
\end{align*}
The desired result then follows from \eqref{eq|T|P}.
\end{proof}

We are now ready to determine all eigenvalues for
$\left|T^{(s,t)}\right|^2$ (hence all singular values for
$T^{(s,t)}$ when it is compact) and the corresponding eigenfunctions.
This is done in two steps. The next theorem produces a sequence of
eigenvalues and Theorem~\ref{16} shows that there are no others.

\begin{theorem}\label{thm|T|2}
Let $(s, t)\in\C\times\C$ with $|s|^2>|t|^2+1$ and let $\gamma$ be the
number in $\D$ from Lemma~\ref{existenceofgamma}. Then for each
nonnegative integer $n$ the positive number
$$\lambda_n=\left(\frac{|s|}{|s|^2+
\gamma{\overline{st}}}\right)^{2n+1}$$
is an eigenvalue of $\left|T^{(s,t)}\right|^2$ and the function
$Q_n(z) e^{\gamma z^2/2}$ is a corresponding eigenvector, where
$$Q_n(z)=\inr  H_n(x)\exp\left[-\nu\left(x-bz\right)^2\right]\,dx$$
is a polynomial of degree $n$. Here $H_n(x)$ is the classical Hermite
polynomial of degree $n$,
$$\nu=\frac {(|s|^2+\gamma{\overline{st}})^{2}\left[(|s|^2
+\gamma{\overline{st}})^2+(\gamma+1){\overline{st}}
(|t|^2+1-|s|^2)/2\right]-|s|^4}{(|s|^2+
\gamma{\overline{st}})^{4}-|s|^4},$$
and $b=\sqrt{\gamma+1}/(2 \sqrt{\nu})$.
\end{theorem}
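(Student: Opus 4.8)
The plan is to verify directly that $g_n(z):=Q_n(z)\,e^{\gamma z^2/2}$ is an eigenvector of $\left|T^{(s,t)}\right|^2$, by feeding it into Lemma~\ref{lempolynom} with the special choice $P_n=H_n$. Throughout I abbreviate $\beta:=|s|^2+\gamma\overline{st}$ and $D:=\beta^2+2\nu b^2\,\overline{st}(|t|^2+1-|s|^2)$. The proof of Lemma~\ref{existenceofgamma} shows that $\beta$ is in fact \emph{real} with $\beta>|s|>0$, so $\lambda_0=|s|/\beta\in(0,1)$, the quantity $\gamma\overline{st}=\beta-|s|^2$ is real, and $\beta^4-|s|^4=(\beta^2-|s|^2)(\beta^2+|s|^2)>0$. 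This reality will be used repeatedly to keep the square roots and signs under control.

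First I would confirm that the stated $\nu$ and $b$ meet the hypotheses of Lemma~\ref{lempolynom}. Since $b^2=(\gamma+1)/(4\nu)$, one has $2\nu b^2=(\gamma+1)/2$, hence $2\nu b^2-\gamma=(1-\gamma)/2$ and $|2\nu b^2-\gamma|=|1-\gamma|/2\le(1+|\gamma|)/2<1$ because $\gamma\in\mathbb{D}$; so that hypothesis is automatic. For $\re\nu>0$ I note that the stated formula is exactly $\nu=(\beta^2 D-|s|^4)/(\beta^4-|s|^4)$, so that $\nu-1=\beta^2(D-\beta^2)/(\beta^4-|s|^4)$; taking real parts (legitimate since $\beta$ and $\beta^4-|s|^4$ are real) and inserting the bound $\re D\ge|s|^2$ furnished by \eqref{eqrenu} gives $\re\nu\ge|s|^2/(\beta^2+|s|^2)>0$. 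Note also that $D$ depends on $\nu$ only through $2\nu b^2=(\gamma+1)/2$, so $D$ is in fact a fixed number.

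Next I would record the Hermite--Gaussian reproduction identity, which is the structural reason for taking $P_n=H_n$: applying the generating function $\sum_k H_k(x)\tau^k/k!=e^{2x\tau-\tau^2}$ together with \eqref{eqIC}, a short computation yields
$$\inr H_n(x)\exp\left[-\mu(x-cz)^2\right]dx=\sqrt{\frac{\pi}{\mu}}\left(\frac{\mu-1}{\mu}\right)^{n/2}H_n\!\left(cz\sqrt{\frac{\mu}{\mu-1}}\right)$$
for any $\re\mu>0$. In particular $Q_n$ is a polynomial in $z$ of degree exactly $n$ (its leading coefficient is a nonzero multiple of $b^n$, and $b\ne0$ since $\gamma\ne-1$), and the entire $z$-dependence of such an integral enters only through the single scaling factor $c\sqrt{\mu/(\mu-1)}$.

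Finally I would apply Lemma~\ref{lempolynom} to $g_n$, obtaining $\left|T^{(s,t)}\right|^2 g_n=C\,e^{\gamma z^2/2}\inr H_n(x)\exp[-\nu'(x-b'z)^2]\,dx$ with $\nu'=\beta^2\nu/D$, $b'=b|s|^2/\beta^2$, and $C=|s|/\sqrt{D}$. By the reproduction identity the output is a scalar multiple of $Q_n$ precisely when the two Hermite arguments agree, i.e. when $b'\sqrt{\nu'/(\nu'-1)}=b\sqrt{\nu/(\nu-1)}$. Verifying that the stated $\nu$ (after substituting $2\nu b^2=(\gamma+1)/2$ and using \eqref{eqsgamma}) satisfies this matching condition is the main obstacle: it is a lengthy identity, but I expect it to collapse through the relation $\beta^2\nu-D=(\nu-1)|s|^4/\beta^2$, which the stated $\nu$ secures. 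Once the arguments match, the eigenvalue is just the ratio of normalizations $C\sqrt{\nu/\nu'}\,(\alpha/\alpha')^n$ with $\alpha=\sqrt{\nu/(\nu-1)}$; here the factor $\sqrt{D}$ cancels to give $C\sqrt{\nu/\nu'}=|s|/\beta=\lambda_0$, and the same relation yields $(\alpha/\alpha')^2=|s|^4/\beta^4=\lambda_0^4$, so the eigenvalue is $\lambda_0\cdot\lambda_0^{2n}=\lambda_0^{2n+1}=\lambda_n$. Since $\left|T^{(s,t)}\right|^2$ is a positive operator, its eigenvalue must be the positive real number $\lambda_0^{2n+1}$, which removes any ambiguity in the branches of the square roots along the way.
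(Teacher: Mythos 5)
Your overall architecture is the same as the paper's: both proofs funnel everything through Lemma~\ref{lempolynom} with $P_n=H_n$, and both reduce the eigenvalue claim to an algebraic identity among $\nu$, $b$, and the transformed parameters $\nu'=\beta^2\nu/D$, $b'=b|s|^2/\beta^2$. Where you genuinely diverge is the last step. The paper does not evaluate the Gaussian--Hermite integrals at all; it verifies the hypotheses of Theorem E of \cite{DZ3} (in particular $(b^2-a^2)\mu\nu/(\nu b^2-\mu a^2)=1$ and $\sqrt{\nu}/\sqrt{\mu}\,(a/b)^n=\lambda_n/C$ with $a=b'$, $\mu=\nu'$) and cites that theorem as a black box. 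You instead prove the needed fact from scratch via the generating-function identity $\inr H_n(x)e^{-\mu(x-cz)^2}dx=\sqrt{\pi/\mu}\,\bigl((\mu-1)/\mu\bigr)^{n/2}H_n\bigl(cz\sqrt{\mu/(\mu-1)}\bigr)$, match the two Hermite arguments, and read off the eigenvalue as a ratio of normalizations. Note that your matching condition $b'^2\nu'/(\nu'-1)=b^2\nu/(\nu-1)$ is algebraically \emph{identical} to the Theorem E hypothesis above, and your key relation $\beta^2\nu-D=(\nu-1)|s|^4/\beta^2$ does follow at once from the stated formula for $\nu$; I checked that the matching condition and the eigenvalue $\lambda_0^{2n+1}$ then drop out exactly as you predict. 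So your route is correct in substance and has the advantage of being self-contained (no appeal to the predecessor paper beyond Lemma~\ref{lempolynom}), with the positivity of $|T^{(s,t)}|^2$ neatly disposing of all branch ambiguities.

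There is, however, one genuine omission. Your reproduction identity is only valid for $\re\mu>0$, and you apply it twice: once with $\mu=\nu$ (which you justify) and once with $\mu=\nu'=\beta^2\nu/D$ (which you never justify). This second condition is \emph{not} automatic: $\re\nu>0$ and $\re D\geq|s|^2>0$ do not imply $\re(\nu/D)>0$, since positivity of real parts is not preserved under quotients (e.g.\ $(1+10i)/(1-10i)$ has negative real part). This is precisely the point where the paper expends real effort: the displayed computation establishing $\re\mu>0$, which exploits the facts that $\gamma\overline{st}$ is real, that $\im(2\nu b^2\overline{st})=\im(\overline{st})/2$, and the quantitative bounds \eqref{eqrenu} and \eqref{eqIRenu}. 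You would need to reproduce that computation (or argue that the convergence of the output integral is already implicit in the conclusion of Lemma~\ref{lempolynom}) before invoking your identity with $\mu=\nu'$. A second, much smaller point: when $t=0$ one has $\gamma=0$, $D=\beta^2$ and hence $\nu=\nu'=1$, so your identity must be read in the limiting sense $\inr H_n(x)e^{-(x-cz)^2}dx=\sqrt{\pi}\,(2cz)^n$; the argument still goes through, but the degenerate case deserves a sentence.
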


\begin{proof}
From \eqref{eqsgamma} we first observe that
\begin{equation}\label{eqgammaminus}
    \left(|s|^2+\gamma{\overline{st}}\right)^2-|s|^2
=(|s|^2+\gamma{\overline{st}})(|s|^2-|t|^2-1)>0.
\end{equation}
Since $2\nu b^2=(\gamma+1)/2$, we have
$$\nu=\frac {(|s|^2+\gamma{\overline{st}})^{2}\left[(|s|^2+
\gamma{\overline{st}})^2+2\nu b^2{\overline{st}}
(|t|^2+1-|s|^2)\right]-|s|^4}{(|s|^2
+\gamma{\overline{st}})^{4}-|s|^4}.$$
It follows that
$$|2\nu b^2-\gamma|=\left|\frac{1-\gamma}2\right|
\le\frac{1+|\gamma|}2<1,$$
and by \eqref{eqrenu} and \eqref{eqgammaminus},
\begin{equation}\label{eqIRenu}
\re\nu\geq\frac {|s|^2}{(|s|^2+\gamma{\overline{st}})^{2}+|s|^2}>0.
\end{equation}
By Corollary 9 of \cite{DZ3}, $Q_n(z)$ is a polynomial of degree $n$.

According to Lemma~\ref{lempolynom}, it remains for us to prove that
\begin{align}\label{eqintH}
\inr H_n(x)&\exp\left[-\frac{(|s|^2+\gamma{\overline{st}})^{2}\nu
\left(x-\frac{b|s|^2}{\left(|s|^2+\gamma{\overline{st}}\right)^2}z
\right)^2}{(|s|^2+\gamma{\overline{st}})^2+2\nu b^2{\overline{st}}
(|t|^2+1-|s|^2)} \right]dx\nonumber\\[8pt]
&=\frac{\lambda_n}{C}\inr H_n(x)\exp\left[-\nu(x-bz)^2\right]dx,
\end{align}
where $C$ is the constant from Lemma~\ref{lempolynom}.

Let us write
$$a=\frac{b|s|^2}{\left(|s|^2+\gamma{\overline{st}}\right)^2},\qquad
\mu=\frac{\nu(|s|^2+\gamma{\overline{st}})^{2}}{(|s|^2+
\gamma{\overline{st}})^2+2\nu b^2{\overline{st}}(|t|^2+1-|s|^2)}.$$
It is clear from \eqref{eqgammaminus} that $a\neq 0$ and $a^{k}\neq b^{k}$
for $1\le k\leq n$. Since $\gamma\overline{st}$ is real, we have
$$\im\,(2\nu b^2\overline{st})=\im\left(\frac{(\gamma+1)
\overline{st}}{2}\right)=\frac{\im\,(\overline{st})}{2}.$$
Combining this with \eqref{eqrenu} and \eqref{eqIRenu}, we obtain
\begin{align*}
&\frac{\left|(|s|^2+\gamma{\overline{st}})^2+
2\nu b^2{\overline{st}}(|t|^2+1-|s|^2)\right|^2\re \mu }
{(|s|^2+\gamma{\overline{st}})^{2}}\\
&=\re\nu\re\left[(|s|^2+\gamma{\overline{st}})^2+
2{\nu b^2{\overline{st}}}(|t|^2+1-|s|^2)\right]+(|t|^2+1-|s|^2)
\im\nu\im\,(\overline{st})/2 \\
&\geq\frac{|s|^4}{(|s|^2+\gamma{\overline{st}})^{2}+|s|^2}
+\frac{(|s|^2+\gamma{\overline{st}})^2(|t|^2+1-|s|^2)^2
[\im\,(\overline{st})]^2}{4\left[(|s|^2+\gamma{\overline{st}})^{4}
-|s|^4\right]}>0,
\end{align*}
which clearly implies that $\re \mu >0$. Consequently,
$$\frac{\sqrt{\nu}}{\sqrt{\mu}}\left(\frac{a}{b}\right)^n=\frac{|s|^{2n}\sqrt{(|s|^2+
\gamma{\overline{st}})^2+2\nu b^2{\overline{st}}(|t|^2+1-|s|^2)}}{(|s|^2+\gamma{\overline{st}})^{2n+1}}=\frac{\lambda_n}{C}.
$$
Moreover, combining \eqref{eqgammaminus} with \eqref{eqrenu}, we have
\begin{align*}
\nu b^2-\mu a^2=\nu b^2\frac{(|s|^2+\gamma{\overline{st}})^{2}
\left[(|s|^2+\gamma{\overline{st}})^2+2\nu b^2{\overline{st}}
(|t|^2+1-|s|^2)\right]-|s|^4}{(|s|^2+\gamma{\overline{st}})^{2}
\left[(|s|^2+\gamma{\overline{st}})^2+2\nu b^2{\overline{st}}
(|t|^2+1-|s|^2)\right]}\neq0,
\end{align*}
and then
\begin{align*}
\frac{(b^2-a^2)\mu\nu}{\nu b^2-\mu a^2}
&=\frac{\nu b^2}{\nu b^2-\mu a^2}\left(1-\frac{a^2}{b^2}\right)\mu\\
&=\frac{(|s|^2+\gamma{\overline{st}})^{4}\nu-|s|^4\nu}
{(|s|^2+\gamma{\overline{st}})^{2}\left[(|s|^2+
\gamma{\overline{st}})^2+2\nu b^2{\overline{st}}
(|t|^2+1-|s|^2)\right]-|s|^4}=1.
\end{align*}
It follows from Theorem E of \cite{DZ3} that the $n$th Hermite
polynomial $H_n(x)$ is a solution of the integral equation \eqref{eqintH}.
This completes the proof of the theorem.
\end{proof}

\begin{theorem}\label{16}
Let $(s, t)\in\C\times\C$ with $|s|^2>|t|^2+1$ and let $\gamma\in\D$
be the number from Lemma~\ref{existenceofgamma}. Then the
decreasing sequence $\left\{\mu_n\right\}_{n=0}^\infty$, where
$$\mu_n=\left(\frac{|s|}{|s|^2+
\gamma{\overline{st}}}\right)^{n+(1/2)},$$
is the sequence of all singular values for $T^{(s,t)}$. Consequently,
$$\|T^{(s,t)}\|_{S_p}^p=\frac{\left[|s|
(|s|^2+\gamma{\overline{st}})\right]^{p/2}}
{(|s|^2+\gamma{\overline{st}})^p-|s|^p }.$$
\end{theorem}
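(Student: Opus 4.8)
The plan is to combine the explicit eigenvalues produced in Theorem~\ref{thm|T|2} with the Hilbert--Schmidt norm computed in Corollary~\ref{corS2}, and then to rule out any further singular values by a trace-matching argument. Write $D=|s|^2+\gamma\overline{st}$ and $r=|s|/D$. From Lemma~\ref{existenceofgamma} we have $D>|s|>0$, so that $r\in(0,1)$; hence the eigenvalues $\lambda_n=r^{2n+1}$ of $\bigl|T^{(s,t)}\bigr|^2$ furnished by Theorem~\ref{thm|T|2} are pairwise distinct and strictly decreasing to $0$, and the associated singular values of $T^{(s,t)}$ are exactly $\mu_n=\sqrt{\lambda_n}=r^{\,n+(1/2)}$. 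This already identifies the sequence $\{\mu_n\}$ as singular values; the remaining issue is completeness.

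Since $|s|^2>|t|^2+1$, the operator $T^{(s,t)}$ is compact, so $\bigl|T^{(s,t)}\bigr|^2=(T^{(s,t)})^*T^{(s,t)}$ is a positive compact operator. It therefore admits a complete orthonormal system of eigenvectors, with a corresponding multiset of positive eigenvalues $\{\sigma_k\}$ satisfying $\sum_k\sigma_k=\tr\bigl(|T^{(s,t)}|^2\bigr)=\|T^{(s,t)}\|_{S_2}^2$. By Corollary~\ref{corS2} this trace equals $|s|/(|s|^2-|t|^2-1)$. On the other hand, summing the geometric series of the candidate eigenvalues and invoking the identity $D^2-|s|^2=D(|s|^2-|t|^2-1)$ from \eqref{eqgammaminus} gives
\begin{equation*}
\sum_{n=0}^\infty\lambda_n=\sum_{n=0}^\infty r^{2n+1}=\frac{r}{1-r^2}
=\frac{|s|D}{D^2-|s|^2}=\frac{|s|}{|s|^2-|t|^2-1}.
\end{equation*}

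The decisive step is the comparison of these two sums. Each $\lambda_n$ is a genuine eigenvalue, and since the $\lambda_n$ are pairwise distinct their eigenvectors are mutually orthogonal; thus $\{\lambda_n\}$, each counted with multiplicity one, is a sub-multiset of $\{\sigma_k\}$. As every $\sigma_k$ is positive and the two sums coincide, the sub-multiset must exhaust the whole multiset: there can be no further eigenvalues of $\bigl|T^{(s,t)}\bigr|^2$, and each $\lambda_n$ occurs with multiplicity exactly one. Consequently $\{\mu_n\}_{n\ge0}$ is precisely the decreasing sequence of all singular values of $T^{(s,t)}$. The Schatten-norm formula then follows from one more geometric summation,
\begin{equation*}
\|T^{(s,t)}\|_{S_p}^p=\sum_{n=0}^\infty\mu_n^p=\sum_{n=0}^\infty r^{(n+(1/2))p}
=\frac{r^{p/2}}{1-r^p}=\frac{(|s|D)^{p/2}}{D^p-|s|^p},
\end{equation*}
which is the asserted expression after substituting $D=|s|^2+\gamma\overline{st}$. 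I expect the trace-matching completeness argument to be the crux of the proof; the rest reduces to routine summation of geometric series together with the substitution $r=|s|/D$ and the identity \eqref{eqgammaminus}.
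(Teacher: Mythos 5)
Your proposal is correct and follows essentially the same route as the paper: produce the eigenvalues $\lambda_n$ via Theorem~\ref{thm|T|2}, then use Corollary~\ref{corS2} to match $\sum_n\lambda_n$ against $\|T^{(s,t)}\|_{S_2}^2=\tr\bigl(|T^{(s,t)}|^2\bigr)$, forcing the candidate multiset to exhaust all eigenvalues with multiplicity one, and finish with the geometric series for the $S_p$-norm. The paper phrases the completeness step as the inequality $\|T^{(s,t)}\|_{S_2}^2\ge\sum_n\mu_n^2$ becoming an equality, which is exactly your trace-matching argument in different words.
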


\begin{proof}
If $\lambda_n$ is the eigenvalue of $\left|T^{(s,t)}\right|^2$ from
Theorem~\ref{thm|T|2}, then $\mu_n=\sqrt{\lambda_n}$ is an eigenvalue
of $\left|T^{(s,t)}\right|$. Thus by definition, each $\mu_n$ is a
singular value of $T^{(s,t)}$.

There might be additional singular values, and some of the eigenvalues
$\lambda_n$ might have high multiplicities, thus we have the following
inequality:
$$\|T^{(s,t)}\|_{S_p}^p\geq\sum_{n=0}^{\infty}\mu_n^p
=\frac{\left[|s|(|s|^2+\gamma{\overline{st}})
\right]^{p/2}}{(|s|^2+\gamma{\overline{st}})^p-|s|^p }.$$
In particular, if $p=2$, then by \eqref{eqgammaminus} we have
$$\|T^{(s,t)}\|_{S_2}^2\geq \sum_{n=0}^{\infty}\mu_n^2=
\frac{|s|}{|s|^2-|t|^2-1 }.$$
However, Corollary \ref{corS2} shows that equality holds in the above
inequality. This means that $\{\mu_n\}$ is exactly the set of all nonzero
eigenvalues of $\left|T^{(s,t)}\right|$, and the eigenspace corresponding
to each $\mu_n$ is one-dimensional.
\end{proof}

Recall that for any bounded linear operator $T$ on a Hilbert space we 
always have $\|T\|^2=\|T^*T\|=\|TT^*\|$. Consequently, if $T$ is
compact, then $\|T\|$ is equal to the first (largest) singular value of
$T$. Replacing $\gamma$ by the formula for $\gamma_+$ from the 
proof Lemma~\ref{existenceofgamma}, we obtain the formula for
$\mu_0$ in Theorem A. Finally, a simple calculation with the help of
the geometric series completes the proof of Theorem A.

\section{Canonical integral operators on $F^p$}

In this section, we consider the boundedness and compactness of the
canonical integral operators $T^{(s,t)}$ on $F^p$ for $0< p\leq\infty$.

\begin{proposition}\label{propKp}
Let $0<p<\infty$, $w\in\C$, and $(s, t)\in\C\times\C$ with $s\neq0$. Then
the kernel function $K_w^{(s,t)}$ belongs to $F^p$ if and only if $|s|>|t|$.
In this case,
$$\left\|K^{(s,t)}_w\right\|_{p}=\frac{|s|^{(1/p-1/2)}}
{(|s|^2-|t|^2)^{1/(2p)}}\exp\left[\frac{|{w}|^2}{2(|s|^2-|t|^2)}\right]
\left|\exp\left[\frac{t(|t|^2+1-|s|^2)}{2\overline{s}(|s|^2-|t|^2)}{w^2}
\right]\right|.$$
\end{proposition}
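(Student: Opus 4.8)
The plan is to compute $\|K_w^{(s,t)}\|_p^p=\frac{p}{2\pi}\int_\C\bigl|K_w^{(s,t)}(z)\bigr|^p e^{-p|z|^2/2}\,dA(z)$ directly, using the explicit modulus of the kernel together with the Gaussian integral identity \eqref{eqIC}. First I would insert the formula \eqref{eqnormK},
$$\bigl|K^{(s,t)}(z,w)\bigr|=\frac{1}{\sqrt{|s|}}\left|\exp\left(\frac{t}{2s}z^2-\frac{\overline t}{2s}\overline w^2\right)\right|\exp\left(\frac{\re(s\overline z w)}{|s|^2}\right),$$
raise it to the $p$-th power, and pull out of the integral every factor that does not depend on $z$. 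The only such factors are $|s|^{-p/2}$ and $\exp\bigl[-p\re(\overline t\,\overline w^2/2s)\bigr]$, and what remains is the purely Gaussian integral $I=\int_\C\exp\bigl[p\re(tz^2/2s)+p\re(s\overline z w)/|s|^2-\tfrac p2|z|^2\bigr]\,dA(z)$.

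Next I would normalize the $|z|^2$-coefficient by the substitution $z=\sqrt{2/p}\,u$ (so $dA(z)=\tfrac2p\,dA(u)$ and $-\tfrac p2|z|^2=-|u|^2$) and then convert $dA(u)=\pi e^{|u|^2}\,d\lambda(u)$, exactly as in the proof of Lemma~\ref{lemsp}. Splitting each real part into its holomorphic and antiholomorphic pieces puts $I$ into the form of \eqref{eqIC} with $\gamma=\delta=t/s$ and $a=b=\sqrt{p/2}\,\overline w/s$. The admissibility condition $|\gamma+\delta|^2=4|t|^2/|s|^2<4$ is precisely $|s|>|t|$, which is where the hypothesis enters and which also settles the ``only if'' direction: when $|s|\le|t|$ the quadratic part $p\re(tz^2/2s)-\tfrac p2|z|^2$ fails to be negative definite, since its supremum over $\arg z$ equals $\tfrac p2(|t|/|s|-1)|z|^2\ge0$, so the integral diverges and $K_w^{(s,t)}\notin F^p$. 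With $|s|>|t|$, applying \eqref{eqIC} evaluates $I$ in closed form, the factor $1-\gamma\overline\delta=(|s|^2-|t|^2)/|s|^2$ producing $|s|/\sqrt{|s|^2-|t|^2}$.

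Finally I would assemble and simplify. The prefactor $\tfrac{p}{2\pi}$ cancels against the $\tfrac{2\pi}{p}$ coming from the substitution and the measure conversion, leaving the constant $|s|^{1-p/2}(|s|^2-|t|^2)^{-1/2}$. In the exponent from \eqref{eqIC}, the terms $\overline\delta a^2$ and $\gamma\overline b^2$ are complex conjugates of one another, so they combine into $2\re$ of a single $w^2$-term, while the cross term $2a\overline b$ contributes the real quantity $p|w|^2/\bigl(2(|s|^2-|t|^2)\bigr)$. The decisive simplification is that this surviving $w^2$-term, once combined with the extracted factor $\exp\bigl[-p\re(\overline t\,\overline w^2/2s)\bigr]=\exp\bigl[-p\re(tw^2/2\overline s)\bigr]$ (using $\re\overline a=\re a$), collapses via $\tfrac{1}{|s|^2-|t|^2}-1=\tfrac{|t|^2+1-|s|^2}{|s|^2-|t|^2}$ to exactly $p\re\!\left(\frac{t(|t|^2+1-|s|^2)}{2\overline s(|s|^2-|t|^2)}w^2\right)$. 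Taking $p$-th roots then yields the stated formula.

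I expect the main obstacle to be this last piece of algebraic bookkeeping: keeping the holomorphic, antiholomorphic, and conjugate-paired terms straight so that the three $w$-dependent contributions — the extracted factor, the paired $w^2$-terms, and the $|w|^2$ cross term — assemble into the two clean exponentials in the statement. Conceptually it is worth noting that because $\bigl|K_w^{(s,t)}(z)\bigr|$ depends on $\sqrt s$ only through $|\sqrt s|=\sqrt{|s|}$, the branch of the complex square root never enters, which is why only $|s|$ and $\overline s$ appear in the final answer. As a consistency check, the $p=2$ case reproduces the Hilbert--Schmidt computation underlying Lemma~\ref{lemsp}.
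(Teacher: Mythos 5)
Your proposal is correct and takes essentially the same route as the paper: pull the $z$-independent factors out of $\|K_w^{(s,t)}\|_p^p$, rescale $z=\sqrt{2/p}\,u$ to pass to the Gaussian measure, and apply \eqref{eqIC} with $\gamma=\delta=t/s$ and $a=b=\sqrt{p/2}\,\overline w/s$, which is exactly the paper's computation and yields the same simplification $\tfrac{1}{|s|^2-|t|^2}-1=\tfrac{|t|^2+1-|s|^2}{|s|^2-|t|^2}$. The only soft spot is the ``only if'' direction at the borderline $|t|=|s|$, where ``the quadratic part fails to be negative definite'' does not by itself force divergence (the exponent is then $\le 0$ away from two critical rays) and one needs the standard sector estimate that the angular width on which the exponent is $O(1)$ is comparable to $1/r$, contributing a non-decaying amount per unit radius; this is precisely the argument of Lemma 1 of \cite{DZ3}, to which the paper itself defers, so your sketch is at the same level of detail as the published proof.
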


\begin{proof}
By an argument similar to that given for the case $p =2$ (see Lemma 1 of
\cite{DZ3}) we get $K_w^{(s,t)}\in F^p$ if and only if $|s|>|t|$. Moreover,
for any fixed $w\in\C$ and $|s|>|t|$, it follows from \eqref{eqIC} that
\begin{align*}
\left\|K^{(s,t)}_w\right\|_p^p&=\frac{p}{2\pi{|s|}^{p/2}}\left|\exp
\left(-\frac{p\overline{t}}{2s}\overline{w}^2\right)\right|
\inc\left|\exp\left[\frac{pt}{4s}z^2+\frac{p z\overline{w}}{2s}
\right]\right|^2e^{-\frac{p|z|^2}{2}}\,dA(z)\\[8pt]
&=\frac{1}{{|s|}^{p/2}}\left|\exp\left(-\frac{p\overline{t}}{2s}
\overline{w}^2\right)\right|\inc\left|\exp\left[\frac{{t}}{2s}{u}^2+
\frac{\sqrt{p}u\overline{w}}{\sqrt{2}s}\right]\right|^2d\lambda(u)\\[8pt]
&=\frac{|s|}{{|s|}^{p/2}\sqrt{|s|^2-|t|^2}}\exp\left[\frac{p|{w}|^2}
{2(|s|^2-|t|^2)}\right]\left|\exp\left[\frac{pt(|t|^2+1-|s|^2)}
{2\overline{s}(|s|^2-|t|^2)}{w^2}\right]\right|.
\end{align*}
This proves the desired result.
\end{proof}

\begin{proposition}\label{propKinfty}
Suppose $w\in\C$ and $(s, t)\in\C\times\C$ with $s\neq0$.
\begin{enumerate}
\item[(a)] If $w=0$, then $K_w^{(s,t)}\in F^\infty$ if and only if $|s|\geq|t|$.
\item[(b)] If $w\neq0$, then $K_w^{(s,t)}\in F^\infty$ if and only if $|s|>|t|$.
\end{enumerate}
In both cases,
$$\left\|K^{(s,t)}_w\right\|_\infty\geq\frac{1}{\sqrt{|s|}}
\left|\exp \left[\frac{(\varepsilon^2|s|^2-1)stw^2}{2|s|^2}\right]\right|
\exp\left[\frac{2\varepsilon-\varepsilon^2|s|^2}{2}|w|^2\right]$$
for any $\varepsilon\geq0$.
\end{proposition}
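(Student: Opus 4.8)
The plan is to reduce the whole proposition to the behavior of the single real-valued function
$$\varphi_w(z)=\log\Big(\big|K^{(s,t)}_w(z)\big|\,e^{-|z|^2/2}\Big)=-\frac12\log|s|+\re\!\Big(\frac{t}{2s}z^2+\frac{z\overline w}{s}\Big)-\re\!\Big(\frac{tw^2}{2\overline s}\Big)-\frac{|z|^2}{2},$$
because $K_w^{(s,t)}\in F^\infty$ precisely when $\varphi_w$ is bounded above on $\C$, in which case $\|K_w^{(s,t)}\|_\infty=\exp(\sup_{\C}\varphi_w)$. Writing $z=x+iy$, the $z$-dependent part of $\varphi_w$ is a real quadratic polynomial in $(x,y)$ whose second-order part is $h_2(z)=\re(\frac{t}{2s}z^2)-\frac12|z|^2$ and whose first-order part is the linear functional $\Lambda(z)=\re(\frac{\overline w}{s}z)$, with $\Lambda\equiv0$ exactly when $w=0$. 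The symmetric matrix of $h_2$ has trace $-1$ and determinant $(|s|^2-|t|^2)/(4|s|^2)$, so its signature is governed entirely by the sign of $|s|^2-|t|^2$: negative definite when $|s|>|t|$, negative semidefinite with a one-dimensional null line when $|s|=|t|$, and indefinite when $|s|<|t|$. This trichotomy is exactly what separates the three regimes in the statement.

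For the sufficiency half I would argue as follows. When $|s|>|t|$, Proposition~\ref{propKp} (with $p=2$) gives $K_w^{(s,t)}\in F^2$, and since $F^2\subset F^\infty$ continuously (see \cite{Zhu1}) we get $K_w^{(s,t)}\in F^\infty$ for every $w$; this disposes of the ``if'' direction of both (a) and (b) at once. The one remaining sufficiency case is $w=0$ with $|s|=|t|$, where $\Lambda\equiv0$ and $\varphi_0(z)=-\frac12\log|s|+\frac{|z|^2}{2}\big(\re(te^{2i\arg z}/s)-1\big)$; since $|s|=|t|$ forces $\re(te^{2i\theta}/s)\le1$ for all $\theta$, the exponent is $\le0$, so $\varphi_0$ is bounded above and in fact $\|K_0^{(s,t)}\|_\infty=|s|^{-1/2}$.

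For necessity I would use the contrapositive. If $|s|<|t|$ then the matrix of $h_2$ has a positive eigenvalue, so along the corresponding eigendirection $h_2(z)\to+\infty$ quadratically and swamps the linear term $\Lambda$; hence $\varphi_w$ is unbounded and $K_w^{(s,t)}\notin F^\infty$ for every $w$, consistent with both (a) and (b). The genuinely delicate case, and the one I expect to be the main obstacle, is the boundary $|s|=|t|$ with $w\neq0$. Here $h_2$ vanishes identically along its null line and is $\le0$ elsewhere, so the entire question reduces to whether the first-order term $\Lambda$ has a nonzero component along that null line: if it does, then $\varphi_w$ already grows linearly, hence is unbounded, along the null line, giving $K_w^{(s,t)}\notin F^\infty$. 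Controlling this interaction between the degenerate second-order form and the linear term, and ruling out the accidental cancellations that can occur for special arguments of $w$, is the crux of the whole argument, and is precisely where the hypothesis $w\neq0$ has to be exploited with care.

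Finally, the explicit lower bound in the statement I would treat as a separate, self-contained deliverable, proved by evaluating $\varphi_w$ along the admissible ray $z=\varepsilon s w$ with $\varepsilon\ge0$. Substituting this and using the identity $stw^2/(2|s|^2)=tw^2/(2\overline s)$, the three $z$-dependent terms of $\varphi_w$ collapse to $\varepsilon^2|s|^2\re(tw^2/(2\overline s))+\varepsilon|w|^2-\frac{\varepsilon^2|s|^2}{2}|w|^2$, which reassembles exactly into the claimed right-hand side; at $\varepsilon=0$ it recovers the value $|s|^{-1/2}$ from the $w=0$ analysis. I would emphasize that this particular ray is chosen for its clean closed form rather than for optimality: its $\varepsilon^2$-coefficient is positive exactly when $\re(tw^2/(2\overline s))>\frac12|w|^2$, so letting $\varepsilon\to\infty$ proves unboundedness only within that sub-range. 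The complementary cases, most importantly the boundary $|s|=|t|$ with $w\neq0$, are exactly where this ray fails to grow and must instead be handled by the null-line analysis of the previous paragraph, which is the reason I single that step out as the main difficulty.
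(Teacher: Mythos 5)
Your sufficiency arguments ($|s|>|t|$ via the inclusion $F^2\subset F^\infty$; the case $w=0$, $|s|=|t|$ by direct inspection), your treatment of the indefinite case $|s|<|t|$, and your lower-bound computation are all correct. Moreover, the lower bound is not merely ``equivalent to'' but literally the same estimate as the paper's: the paper writes $\|K^{(s,t)}_w\|_\infty=\|W_wK^{(s,t)}_w\|_\infty$ (Weyl-operator isometry) and then evaluates at the single point $\varepsilon sw+w$, and since
$$\left|k_w(\varepsilon sw+w)\right|e^{-|\varepsilon sw+w|^2/2}=e^{-\varepsilon^2|s|^2|w|^2/2},$$
that chain of inequalities collapses to exactly your pointwise evaluation of $|K^{(s,t)}_w(z)|e^{-|z|^2/2}$ at $z=\varepsilon sw$. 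The genuine gap is the one you flagged yourself: the necessity case $|s|=|t|$, $w\neq0$, which is the entire content of the ``only if'' in (b) beyond the indefinite case, is never actually proved. You reduce it to whether the linear term $\Lambda$ has a nonzero component along the null line of $h_2$ and then defer the ``accidental cancellation'' case, so as written the proof of (b) is incomplete.

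What makes this gap more than a technicality is that it cannot be closed: finishing your own computation disproves the statement. Assume $|s|=|t|$, $t\neq0$, write $t/s=e^{i\alpha}$ and $z=(u+iv)e^{-i\alpha/2}$ with $u,v\in\R$. Then
$$\varphi_w(z)=-\tfrac12\log|s|-\re\Bigl(\frac{tw^2}{2\overline s}\Bigr)+au-bv-v^2,
\qquad a+ib=\frac{e^{-i\alpha/2}\overline w}{s},$$
so $\varphi_w$ is bounded above if and only if $a=0$; since $(a+ib)^2=\overline w^2/(st)$, this happens if and only if $stw^2\in(-\infty,0)$, a full punctured line of nonzero $w$'s on which $K^{(s,t)}_w\in F^\infty$. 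Concretely, for $s=t=1$ and $w=i$ one has $K^{(1,1)}_i(z)=e\,e^{(z-i)^2/2}$ and
$$\left|K^{(1,1)}_i(z)\right|e^{-|z|^2/2}=e^{3/4-(\im z-1/2)^2},$$
hence $\|K^{(1,1)}_i\|_\infty=e^{3/4}<\infty$ although $w\neq0$ and $|s|=|t|$. So part (b) is false as stated, and your caution was more justified than you intended: the paper's own proof is defective at precisely this spot, since its assertion that ``we easily check'' $\sup_z|K^{(s,t)}_w(z)|e^{-|z|^2/2}=\infty$ for all $w\neq0$ with $|s|=|t|$ fails exactly on that exceptional line. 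The statement survives a simple repair---in (b) one must exclude the $w$ with $stw^2\in(-\infty,0)$---and the only place the paper invokes this direction of (b) (the unboundedness discussion preceding Theorem~\ref{thmTBp}) needs just one suitable $w$, e.g.\ any $w$ with $stw^2=|stw^2|>0$, so nothing downstream is affected; but neither your proposal nor the paper proves Proposition~\ref{propKinfty}(b) as it stands, because it is not true.
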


\begin{proof}
If $t=0$, then it is clear that both $|s|>|t|$ and $K_w^{(s,t)}\in F^\infty$
are true for any $w\in\C$. So we assume $t\neq0$ for the rest of the proof.
It is clear that for $w=0$, $K_w^{(s,t)}(z)=e^{tz^2/(2s)}\in F^\infty$ if and
only if the type $|t|/(2|s|)$ is less than or equal to $1/2$, or $|s|\geq|t|$.
For $w\neq0$ and $|s|=|t|$, we easily check that
$$\sup\limits_{z\in\C}|K_w^{(s,t)}(z)|\exp [-|z|^2/2]=\infty,$$
from which the desired result follows.

Recall that, for any $u\in\C$, the Weyl operator $W_u$ on the Fock spaces
is defined by
$$W_uf(z)=k_u(z)f(z-u),\qquad f\in F^p.$$
It is known that $W_u$ is a surjective isometry on $F^{\infty}$. See
Proposition 2.38 of \cite{Zhu1}. Thus for any $\varepsilon\geq0$, we have
\begin{align*}
\left\|K^{(s,t)}_u\right\|_\infty&=\left\|W_uK^{(s,t)}_u\right\|_\infty\\[8pt]
&\geq\left|k_u(\varepsilon su+u)\right|\left|K_u^{(s,t)}(\varepsilon su)\right|
\exp\left[-|\varepsilon su+u|^2/2\right]\\[8pt]
&=\frac{1}{\sqrt{|s|}}\left|\exp \left[\frac{\varepsilon^2|s|^2stu^2-
\overline{st}\overline{u}^2}{2|s|^2}+\frac{\varepsilon s-
\varepsilon\overline{s}}{2}|u|^2\right]\right|\exp\left[
\frac{2\varepsilon-\varepsilon^2|s|^2}{2}|u|^2\right]\\[8pt]
&=\frac{1}{\sqrt{|s|}}\left|\exp\left[\frac{(\varepsilon^2|s|^2-1)stu^2}
{2|s|^2}\right]\right|\exp \left[\frac{2\varepsilon-\varepsilon^2|s|^2}{2}
|u|^2\right].
\end{align*}
The proof is complete.
\end{proof}

As a consequence of the above proposition, we have the following interesting
result which is critical to our proof of the main theorem in this section.

\begin{corollary}\label{corKinftymax}
Let $(s, t)\in\C\times\C$ with $|s|>|t|$. For any $w\in\C$ with
$stw^2=|s||t||w|^2$ we have
$$\exp\left[-\frac{1}{2}|w|^2\right]\left\|K^{(s,t)}_w\right\|_\infty\geq
\frac{1}{\sqrt{|s|}}\exp \left[\frac{1+|t|^2-|s|^2}{2|s|(|s|-|t|)}|w|^2\right].$$
\end{corollary}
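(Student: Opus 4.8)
The plan is to obtain the inequality by applying the lower bound from Proposition~\ref{propKinfty} with a single, optimal choice of the free parameter $\varepsilon\ge0$. The role of the hypothesis $stw^2=|s||t||w|^2$ is to linearize the phase: it forces $stw^2$ to be a nonnegative real number, so for every real $\varepsilon$ the modulus factor in Proposition~\ref{propKinfty} simplifies to
$$\left|\exp\left[\frac{(\varepsilon^2|s|^2-1)stw^2}{2|s|^2}\right]\right|=\exp\left[\frac{(\varepsilon^2|s|^2-1)|t||w|^2}{2|s|}\right].$$
Substituting this into the bound of Proposition~\ref{propKinfty} and multiplying both sides by $\exp[-|w|^2/2]$ collapses the entire right-hand side into a single real exponential of the form $\tfrac{1}{\sqrt{|s|}}\exp[\tfrac{1}{2}|w|^2\,g(\varepsilon)]$, where $g$ is a real quadratic in $\varepsilon$.

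First I would collect the three exponents to identify $g$ explicitly. A routine regrouping, using $(\varepsilon^2|s|^2-1)|t|/|s|=\varepsilon^2|s||t|-|t|/|s|$, gives
$$g(\varepsilon)=-|s|(|s|-|t|)\,\varepsilon^2+2\varepsilon-\left(1+\frac{|t|}{|s|}\right).$$
Because $|s|>|t|$, the leading coefficient $-|s|(|s|-|t|)$ is strictly negative, so $g$ is a concave parabola in $\varepsilon$. Its vertex sits at $\varepsilon_*=1/(|s|(|s|-|t|))$, which is nonnegative, so the maximum of $g$ over the admissible range $\varepsilon\ge0$ is attained at $\varepsilon_*$ and coincides with the global maximum over $\R$.

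The last step is to evaluate $g(\varepsilon_*)$. Using the vertex value $g(\varepsilon_*)=c-b^2/(4a)$ for $a=-|s|(|s|-|t|)$, $b=2$, $c=-(1+|t|/|s|)$, and combining over the common denominator $|s|(|s|-|t|)$, the numerator collapses to $1+|t|^2-|s|^2$, so that
$$g(\varepsilon_*)=\frac{1+|t|^2-|s|^2}{|s|(|s|-|t|)}.$$
Taking $\varepsilon=\varepsilon_*$ in Proposition~\ref{propKinfty} then reproduces exactly the claimed inequality. I do not anticipate any genuine obstacle here: the hypothesis $stw^2=|s||t||w|^2$ does the essential work by removing the modulus and reducing the estimate to a one-dimensional concave maximization, and the only real decision is the choice of $\varepsilon_*$; everything else is elementary algebra.
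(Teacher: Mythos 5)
Your proof is correct and follows essentially the same route as the paper: both apply Proposition~\ref{propKinfty}, use the hypothesis $stw^2=|s||t||w|^2$ to replace the modulus factor by $\exp[(\varepsilon^2|s|^2-1)|t||w|^2/(2|s|)]$, and maximize the resulting concave quadratic in $\varepsilon$ at $\varepsilon_*=1/(|s|(|s|-|t|))$ to obtain the exponent $\frac{1+|t|^2-|s|^2}{2|s|(|s|-|t|)}$. (Your version is in fact slightly cleaner, since you maximize the exponent $g$ itself rather than its absolute value as the paper's statement of the maximization misleadingly suggests.)
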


\begin{proof}
Consider the function
$$f(x)=\frac{(x^2|s|^2-1)|t|}{2|s|}+\frac{2x-x^2|s|^2-1}{2}$$
for $x\in[0,\infty)$. A few lines of elementary calculations show that
$$\max_{x\in[0,\infty)}|f(x)|=f\left(\frac{1}{|s|(|s|-|t|)}\right)
=\frac{1+|t|^2-|s|^2}{2|s|(|s|-|t|)}.$$
The desired result then follows from Proposition~\ref{propKinfty}.
\end{proof}

As an important application of the upper bound for the two-parameter
integral kernels obtained in Section 2, we can find an upper bound for
the sup-norm of them, which is also critical for our proof of the main 
theorem in this section.

\begin{corollary}\label{corSKinftymax}
Let $w\in\C$ and $(s, t)\in\C\times\C$ with $|s|^2\ge|t|^2+1$. Then
$$\exp\left[-\frac{1}{2}|w|^2\right]\left\|K^{(s,t)}_w\right\|_\infty
\leq C\, \exp\left[-\frac{(|s|^2-|t|^2-1)(|s|^2-1)}
{2|s|^2(3|s|^2-|t|^2+1)}|w|^2\right].$$
Here $C$ is the constant from Theorem~\ref{thmKzw}.
\end{corollary}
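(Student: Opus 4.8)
The plan is to combine the definition of the $F^\infty$-norm with the pointwise kernel estimate from Theorem~\ref{thmKzw}. By definition,
$$\exp\left[-\tfrac12|w|^2\right]\left\|K^{(s,t)}_w\right\|_\infty
=\sup_{z\in\C}\left|K^{(s,t)}(z,w)\right|\exp\left[-\tfrac12|z|^2-\tfrac12|w|^2\right],$$
so after inserting the bound from Theorem~\ref{thmKzw} the problem reduces to maximizing over $z\in\C$ the real exponent
$$g(z)=\frac{(|s|^2+1)(|z|^2+|w|^2)}{3|s|^2-|t|^2+1}-\frac{(|s|^2-|t|^2-1)\re(s\overline z w)}{|s|^2(3|s|^2-|t|^2+1)}-\frac{|z|^2}{2}.$$
First I would collect the coefficient of $|z|^2$ and check that it equals $-(|s|^2-|t|^2-1)/[2(3|s|^2-|t|^2+1)]$, which is $\le0$ precisely because of the hypothesis $|s|^2\ge|t|^2+1$; this is what guarantees that $g$ is bounded above and attains its maximum.

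To carry out the maximization I would write $z=re^{i\psi}$ and $sw=|s||w|e^{i\phi}$, so that $\re(s\overline z w)=|s||w|\,r\cos(\phi-\psi)$ and $g$ becomes an explicit function of $r\ge0$ and $\psi$. Since the coefficient of the cross term is nonnegative, the supremum over $\psi$ is obtained by taking $\cos(\phi-\psi)=-1$, which reduces $g$ to a downward parabola in $r$. Evaluating at the vertex then yields a closed expression for $\max_z g(z)$ whose only $w$-dependence is through $|w|^2$ and $|sw|^2=|s|^2|w|^2$.

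The hard part will be the final algebraic simplification: after subtracting $\tfrac12|w|^2$ I must show that the maximal exponent collapses to exactly
$$-\frac{(|s|^2-|t|^2-1)(|s|^2-1)}{2|s|^2(3|s|^2-|t|^2+1)}|w|^2.$$
Concretely, clearing the common denominator $2|s|^2(3|s|^2-|t|^2+1)$ leaves the numerator $(2|s|^4+3|s|^2-|t|^2-1)-|s|^2(3|s|^2-|t|^2+1)$, and the key observation is that this factors as $-(|s|^2-1)(|s|^2-|t|^2-1)$; recognizing this factorization is the crux of the computation. Finally I would dispose of the degenerate case $|s|^2=|t|^2+1$ separately, where the coefficients of both $|z|^2$ and the cross term vanish, so that $g$ is constant in $z$ with value $\tfrac12|w|^2$; then $\exp[-\tfrac12|w|^2]\|K^{(s,t)}_w\|_\infty\le C$, which matches the claimed bound since its right-hand exponent is $0$. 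Combining the two cases gives the corollary with the constant $C$ from Theorem~\ref{thmKzw}.
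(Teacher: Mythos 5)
Your proof is correct and follows essentially the same route as the paper: both insert the pointwise bound from Theorem~\ref{thmKzw} and compute the supremum over $z\in\C$ of the resulting quadratic exponent, with the hypothesis $|s|^2\ge|t|^2+1$ ensuring the exponent is bounded above. The only difference is mechanical: where you optimize via polar coordinates, a parabola vertex, and the factorization of the numerator as $-(|s|^2-1)(|s|^2-|t|^2-1)$, the paper completes the square using the identity $|s|^2(|z|^2+|w|^2)+2\re(s\overline{z}w)=|\overline{s}z+w|^2+(|s|^2-1)|w|^2$, which makes the supremum (attained at $z=-w/\overline{s}$) and the final constant immediate, with no need to treat the case $|s|^2=|t|^2+1$ separately.
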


\begin{proof}
By Theorem \ref{thmKzw}, we have
\begin{multline*}
\left|K_w^{(s,t)}(z)\right|\exp \left[-\frac{|z|^2+|w|^2}{2}\right]\\
\leq C\,\exp\left[-\frac{(|s|^2-|t|^2-1)}{2|s|^2(3|s|^2-|t|^2+1)}
\left[|s|^2(|z|^2+|w|^2)+2\re ({s\overline{z}w})\right]\right].
\end{multline*}
Since $|s|^2\ge|t|^2+1$, we obtain
\begin{align*}
\exp\left[-\frac{|w|^2}{2}\right]\left\|K^{(s,t)}_w\right\|_\infty
&=\sup\limits_{z\in\C}\left|K_w^{(s,t)}(z)\right|
\exp\left[-\frac{|z|^2+|w|^2}{2}\right]\\
&\leq C\, \sup\limits_{z\in\C}\exp\left[-\frac{(|s|^2-|t|^2-1)
\left[|\overline{s}z+w|^2+(|s|^2-1)|w|^2\right]}
{2|s|^2(3|s|^2-|t|^2+1)}\right]\\
&= C\,\exp\left[-\frac{(|s|^2-|t|^2-1)(|s|^2-1)}
{2|s|^2(3|s|^2-|t|^2+1)}|w|^2\right].
\end{align*}
This proves the desired result.
\end{proof}

\begin{proposition}\label{propsup}
Let $(s, t)\in\C\times\C$ with $|s|^2\ge|t|^2+1$. Then
$$\sup_{w\in\C}\inc \left| K^{(s,t)}(z,w)\right|
e^{-\frac{|z|^2+|w|^2}{2}} dA(z)<\infty.$$
If $|s|^2>|t|^2+1$, then
$$\lim_{|w|\rightarrow\infty}\inc\left| K^{(s,t)}(z,w)\right|
e^{-\frac{|z|^2+|w|^2}{2}} dA(z)=0.$$
\end{proposition}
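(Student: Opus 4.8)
The plan is to recognize the integral as a multiple of an $F^1$-norm of a kernel function, for which Proposition~\ref{propKp} already gives an exact formula, and then to read off the dependence on $w$ directly from the resulting exponent. Since $\|f\|_1=\frac{1}{2\pi}\inc|f(z)|e^{-|z|^2/2}\,dA(z)$ and $K^{(s,t)}(z,w)=K^{(s,t)}_w(z)$, pulling the constant factor $e^{-|w|^2/2}$ out of the $z$-integral gives
$$I(w):=\inc\left|K^{(s,t)}(z,w)\right|e^{-\frac{|z|^2+|w|^2}{2}}\,dA(z)=2\pi\,e^{-|w|^2/2}\left\|K^{(s,t)}_w\right\|_1.$$
Because $|s|^2\ge|t|^2+1>|t|^2$ forces $|s|>|t|$, Proposition~\ref{propKp} applies with $p=1$ and furnishes a closed form for $\|K^{(s,t)}_w\|_1$. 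Substituting it, I obtain $I(w)=2\pi\,|s|^{1/2}(|s|^2-|t|^2)^{-1/2}\,e^{\Phi(w)}$, where the entire $w$-dependence is carried by
$$\Phi(w)=\left[\frac{1}{2(|s|^2-|t|^2)}-\frac12\right]|w|^2+\re\left[\frac{t(|t|^2+1-|s|^2)}{2\overline{s}(|s|^2-|t|^2)}\,w^2\right].$$

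Everything then reduces to bounding $\Phi$ from above. The $|w|^2$-coefficient simplifies to $A=-\frac{|s|^2-|t|^2-1}{2(|s|^2-|t|^2)}\le0$, while bounding the real part of the holomorphic term by the modulus of its coefficient contributes at most $|B|\,|w|^2$ with $|B|=\frac{|t|(|s|^2-|t|^2-1)}{2|s|(|s|^2-|t|^2)}$ (here I use $|s|^2-|t|^2-1\ge0$ to remove the absolute value). Hence $\Phi(w)\le(A+|B|)\,|w|^2$, and the crucial combined coefficient factors as
$$A+|B|=\frac{(|s|^2-|t|^2-1)(|t|-|s|)}{2|s|(|s|^2-|t|^2)}\le0,$$
with strict inequality precisely when $|s|^2>|t|^2+1$: the factor $|t|-|s|$ is always negative, and $|s|^2-|t|^2-1$ vanishes exactly on the boundary. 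Consequently, for $|s|^2=|t|^2+1$ we get $\Phi(w)\le0$, so $I(w)$ is uniformly bounded by $2\pi\,|s|^{1/2}(|s|^2-|t|^2)^{-1/2}$, giving the first assertion; and for $|s|^2>|t|^2+1$ the coefficient $A+|B|$ is strictly negative, so $I(w)\to0$ as $|w|\to\infty$, giving the second. This disposes of both claims at once.

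The main obstacle, and the reason I route the argument through the exact norm formula of Proposition~\ref{propKp} rather than the upper estimate of Theorem~\ref{thmKzw} (or Theorem~\ref{thmTKzw}), is the boundary case $|s|^2=|t|^2+1$. There the decay coefficient appearing in those estimates degenerates to zero, so after integrating in $z$ it is too lossy to yield even boundedness; one could in fact use Theorem~\ref{thmTKzw} together with the identity $|s|^2(|z|^2+|w|^2)+2\re(s\overline z w)=|\overline s z+w|^2+(|s|^2-1)|w|^2$ and a change of variables to recover the decay in the strict regime, but this route simply fails at the boundary. The exact formula, by contrast, shows that the $|w|^2$-coefficient of $\Phi$ is exactly $0$ at the boundary, which is sharp and correctly produces boundedness without decay. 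The only genuine computation needed is the elementary verification of the factored form of $A+|B|$; the sign analysis is then immediate.
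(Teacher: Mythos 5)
Your proof is correct and follows essentially the same route as the paper: both identify the integral as $2\pi e^{-|w|^2/2}\bigl\|K^{(s,t)}_w\bigr\|_1$, apply Proposition~\ref{propKp} with $p=1$, bound the modulus of the holomorphic quadratic term by $|B|\,|w|^2$, and observe that the combined coefficient $-\frac{|s|^2-|t|^2-1}{2|s|(|s|+|t|)}$ is nonpositive, vanishing exactly at the boundary $|s|^2=|t|^2+1$. Your factored form of $A+|B|$ agrees exactly with the exponent in the paper's inequality \eqref{IeqIK}, so nothing further is needed.
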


\begin{proof}
For $|s|^2>|t|^2+1$ we have
\begin{align*}
&\left|\exp\left[\frac{t(1+|t|^2-|s|^2)}{2\overline{s}(|s|^2-|t|^2)}
w^2\right]\right|\exp\left[-\frac{(|s|^2-|t|^2-1)}
{2(|s|^2-|t|^2)}|w|^2\right]\\[8pt]
&\qquad\leq\exp\left[\frac{(|s|^2-|t|^2-1)|t|}{2|s|(|s|^2-|t|^2)}
|w|^2-\frac{(|s|^2-|t|^2-1)}{2(|s|^2-|t|^2)}|w|^2\right]\\[8pt]
&\qquad\leq\exp\left[-\frac{|s|^2-|t|^2-1}{2|s|(|s|+|t|)}|w|^2\right]
\end{align*}
for any $w\in\C$. It follows from Proposition~\ref{propKp} that
\begin{align}\label{IeqIK}
\inc\left| K^{(s,t)}(z,w)\right|e^{-\frac{|z|^2+|w|^2}{2}}dA(z)
&=2\pi\exp\left[-\frac{1}{2}|w|^2\right]\left\|K^{(s,t)}_w
\right\|_1\nonumber\\[8pt]
&\leq \frac{2\pi\sqrt{|s|}}{\sqrt{|s|^2-|t|^2}}
\exp\left[-\frac{|s|^2-|t|^2-1}{2|s|(|s|+|t|)}|w|^2\right].
\end{align}
The desired result then follows immediately.
\end{proof}

\begin{corollary}\label{corsup}
Let $(s, t)\in\C\times\C$ with $|s|^2\ge|t|^2+1$. Then
$$c_1=:\sup_{w\in\C}\inc\left|\langle T^{(s,t)}k_w, k_z\rangle
\right|dA(z)\leq\frac{2\pi\sqrt{|s|}}{\sqrt{|s|^2-|t|^2}}$$
and
$$c_2=:\sup_{w\in\C}\inc\left|\left\langle\left(T^{(s,t)}
\right)^{*}k_w, k_z\right\rangle\right| dA(z)\leq
\frac{2\pi\sqrt{|s|}}{\sqrt{|s|^2-|t|^2}}.$$
\end{corollary}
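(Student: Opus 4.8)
The plan is to recognize that both integrals are, after an application of Proposition~\ref{propBT}, exactly the integral already estimated in Proposition~\ref{propsup}, so the corollary will follow with essentially no new computation.

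First I would handle $c_1$. By Proposition~\ref{propBT} we have
$$\left|\langle T^{(s,t)}k_w, k_z\rangle\right|=\left|K^{(s,t)}(z,w)\right|e^{-\frac{|z|^2+|w|^2}{2}},$$
so the inner integral defining $c_1$ coincides exactly with the integral appearing in Proposition~\ref{propsup}. The bound \eqref{IeqIK} established there gives
$$\inc\left|\langle T^{(s,t)}k_w, k_z\rangle\right|dA(z)\leq\frac{2\pi\sqrt{|s|}}{\sqrt{|s|^2-|t|^2}}\exp\left[-\frac{|s|^2-|t|^2-1}{2|s|(|s|+|t|)}|w|^2\right].$$
Since $|s|^2\ge|t|^2+1$, the exponent is nonpositive for every $w$, so the exponential factor never exceeds $1$; taking the supremum over $w\in\C$ then yields $c_1\le\frac{2\pi\sqrt{|s|}}{\sqrt{|s|^2-|t|^2}}$.

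For $c_2$ I would pass to the adjoint via \eqref{eqTstar}, which expresses $\left(T^{(s,t)}\right)^*=\frac{\sqrt{\overline s}}{\overline{\sqrt s}}\,T^{(\overline s,-t)}$ with a unimodular prefactor. Hence $\left|\langle(T^{(s,t)})^*k_w,k_z\rangle\right|=\left|\langle T^{(\overline s,-t)}k_w,k_z\rangle\right|$. Because $|\overline s|=|s|$ and $|{-t}|=|t|$, the pair $(\overline s,-t)$ satisfies the same hypothesis $|\overline s|^2\ge|{-t}|^2+1$, so the argument for $c_1$ applies verbatim to the parameters $(\overline s,-t)$ and produces the identical bound $c_2\le\frac{2\pi\sqrt{|s|}}{\sqrt{|s|^2-|t|^2}}$.

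The entire argument is a short reduction rather than a real computation; the only point deserving attention is verifying that the prefactor $\frac{\sqrt{\overline s}}{\overline{\sqrt s}}$ in \eqref{eqTstar} has modulus one and that the substitution $(s,t)\mapsto(\overline s,-t)$ preserves every modulus entering Proposition~\ref{propsup}, so that its estimate may be reused without recomputation. I do not expect any genuine obstacle.
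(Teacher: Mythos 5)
Your proposal is correct and is essentially identical to the paper's own proof: both reduce $c_1$ via Proposition~\ref{propBT} to the integral already bounded in \eqref{IeqIK}, and reduce $c_2$ via the unimodular prefactor in \eqref{eqTstar} to the same estimate applied to the pair $(\overline{s},-t)$. The only cosmetic difference is that you spell out that the exponential factor in \eqref{IeqIK} is at most $1$ when $|s|^2\ge|t|^2+1$, a step the paper leaves implicit.
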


\begin{proof}
It is clear from Proposition~\ref{propBT} and \eqref{eqTstar} that
$$\inc\left|\left\langle T^{(s,t)}k_w, k_z\right\rangle\right|dA(z)
=\inc \left|K^{({s},t)}(z,w)\right|e^{-\frac{|z|^2+|w|^2}{2}}dA(z),$$
and
\begin{align*}
\inc\left|\left\langle \left(T^{(s,t)}\right)^{*}k_w, k_z\right\rangle\right|dA(z)
&=\inc\left|\left\langle T^{(\overline{s},-t)}k_w, k_z\right\rangle\right|dA(z)\\
&=\inc\left|K^{(\overline{s},-t)}(z,w)\right|e^{-\frac{|z|^2+|w|^2}{2}}dA(z).
\end{align*}
The desired result then follows from \eqref{IeqIK}.
\end{proof}

We are now ready to determine the boundedness and compactness of the
canonical integral operators $T^{(s,t)}$ on $F^p$ for any $0< p\leq\infty$.
First note that for $|t|\ge2|s|$, the integral for $T^{(s,t)}f$ is divergent for
all nonzero polynomials $f$ and for all finite linear combinations of (ordinary)
kernel functions. For $|s|\le|t|<2|s|$, although we have
$$f=K_w\in F^p, \qquad w\neq0,$$
for $0< p\leq\infty$, it follows from \eqref{eqIC} and
Propositions~\ref{propKp} and \ref{propKinfty} that
$$T^{(s,t)}f(z)=K_w^{(s,t)}(z)\notin F^p.$$
This proves that $T^{(s,t)}$ is a densely defined but unbounded linear
operator on $F^p$ when $|s|\le|t|<2|s|$. Therefore, when considering
$T^{(s,t)}$ as operators on $F^p$, we will make the natural assumption
that $|t|<|s|$.

\begin{theorem}\label{thmTBp}
Suppose $|t|<|s|$ and $f\in F^\infty$. Then the integral
$$T^{(s,t)}f(z)=\inc K^{(s,t)}(z,w)f(w)\,d\lambda(w)$$
converges for every $z\in\C$ and $T^{(s,t)}f$ is an entire function.
Moreover, for any $0<p\le\infty$,
\begin{enumerate}
\item[(a)] the operator $T^{(s, t)}$ is bounded on
$F^p$ if and only if $|s|^2\ge|t|^2+1$;
\item[(b)] the operator $T^{(s,t)}$ is compact on $F^p$
if and only if $|s|^2>|t|^2+1$.
\end{enumerate}
\end{theorem}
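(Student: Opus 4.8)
The plan is to transfer everything to an ordinary integral operator on $L^p(\C,dA)$. Setting $g(w)=f(w)e^{-|w|^2/2}$, one has $\|f\|_p\asymp\|g\|_{L^p(\C,dA)}$ for $0<p<\infty$ and $\|f\|_\infty=\|g\|_{L^\infty}$, and a direct computation gives
\[(T^{(s,t)}f)(z)\,e^{-|z|^2/2}=\frac1\pi\inc \widetilde K(z,w)\,g(w)\,dA(w),\qquad \widetilde K(z,w)=e^{-\frac{|z|^2+|w|^2}{2}}K^{(s,t)}(z,w)=\langle T^{(s,t)}k_w,k_z\rangle .\]
Thus $T^{(s,t)}$ is bounded (resp.\ compact) on $F^p$ if and only if the integral operator $S$ with kernel $\widetilde K$ is bounded (resp.\ compact, in the appropriate sense) on $L^p(\C,dA)$. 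For the convergence claim, for $f\in F^\infty$ the bound $|f(w)|\le\|f\|_\infty e^{|w|^2/2}$ shows that, for each fixed $z$, the integrand in $(T^{(s,t)}f)(z)$ is dominated by $C(z)\exp[(|t|/(2|s|)-1/2)|w|^2+(|z|/|s|)|w|]$, which is integrable exactly when $|t|<|s|$; since the domination is locally uniform in $z$, Morera's theorem shows that $T^{(s,t)}f$ is entire.

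For the two ``only if'' directions I would test on normalized reproducing kernels. By \eqref{eqTKw} and $k_w=e^{-|w|^2/2}K_w$ we have $T^{(s,t)}k_w=e^{-|w|^2/2}K_w^{(s,t)}$, so $\|T^{(s,t)}k_w\|_p=e^{-|w|^2/2}\|K_w^{(s,t)}\|_p$ while $\|k_w\|_p=1$. If $T^{(s,t)}$ is bounded on $F^p$ then $\sup_w e^{-|w|^2/2}\|K_w^{(s,t)}\|_p<\infty$; evaluating this supremum from Proposition~\ref{propKp} when $p<\infty$ (and from the lower bound in Corollary~\ref{corKinftymax} when $p=\infty$) shows that the governing quadratic exponent is strictly positive when $|s|^2<|t|^2+1$, so boundedness forces $|s|^2\ge|t|^2+1$. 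At the boundary $|s|^2=|t|^2+1$ the same formula collapses to $e^{-|w|^2/2}\|K_w^{(s,t)}\|_p=|s|^{1/p-1/2}$, a positive constant; since $k_{w_n}\to0$ uniformly on compacta while $\|k_{w_n}\|_p=1$ for any $w_n\to\infty$, this gives a bounded sequence with $\|T^{(s,t)}k_{w_n}\|_p\not\to0$, so $T^{(s,t)}$ is not compact. Hence compactness forces the strict inequality.

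For sufficiency of boundedness when $|s|^2\ge|t|^2+1$ and $1\le p\le\infty$, I would run the Schur test for $S$. Corollary~\ref{corsup} supplies precisely the two Schur sums $M_1=\sup_z\inc|\widetilde K(z,w)|\,dA(w)$ and $M_2=\sup_w\inc|\widetilde K(z,w)|\,dA(z)$ as finite quantities (the finiteness of $M_1$ coming from the adjoint bound, since $|\langle T^{(s,t)}k_w,k_z\rangle|=|\langle(T^{(s,t)})^{*}k_z,k_w\rangle|$), whence $\|S\|_{L^p\to L^p}\le M_1^{1/p'}M_2^{1/p}<\infty$. For $0<p<1$ the Schur test is unavailable, and I would instead invoke the atomic decomposition of $F^p$: fixing a sufficiently fine lattice $\{a_k\}$, every $f\in F^p$ is a sum $f=\sum_k c_k k_{a_k}$ with $\|\{c_k\}\|_{\ell^p}\lesssim\|f\|_p$, and the $p$-subadditivity of $\|\cdot\|_p^p$ together with $T^{(s,t)}k_{a_k}=e^{-|a_k|^2/2}K_{a_k}^{(s,t)}$ gives
\[\|T^{(s,t)}f\|_p^p\le\sum_k|c_k|^p\,\|T^{(s,t)}k_{a_k}\|_p^p\le B_p^p\sum_k|c_k|^p\lesssim B_p^p\,\|f\|_p^p,\]
where $B_p=\sup_w e^{-|w|^2/2}\|K_w^{(s,t)}\|_p<\infty$ by Proposition~\ref{propKp}. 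One first establishes this bound on finite combinations of the $k_{a_k}$, which are dense in $F^p$, and then extends by continuity.

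Finally, for compactness when $|s|^2>|t|^2+1$ I expect a single dominated-convergence argument to cover all $0<p<\infty$ at once, avoiding any truncation. Let $\{f_n\}$ be bounded in $F^p$ with $f_n\to0$ uniformly on compacta, and put $g_n=f_n e^{-|\cdot|^2/2}$. The standard pointwise estimate for Fock spaces gives $|g_n(w)|\le c\,\|f_n\|_p$ uniformly in $n$ and $w$, while $g_n\to0$ pointwise. Setting $r(z)=\inc|\widetilde K(z,w)|\,dA(w)$, the symmetry \eqref{eq3} identifies $r$ with the integral of Proposition~\ref{propsup} for the parameters $(\overline s,-t)$, so that $r(z)\le C'\exp[-c''|z|^2]$ with $c''>0$ by \eqref{IeqIK}; in particular $r\in L^p(\C,dA)$ for every $0<p<\infty$ and $r(z)\to0$ as $|z|\to\infty$. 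Now
\[0\le h_n(z):=\frac1\pi\inc|\widetilde K(z,w)|\,|g_n(w)|\,dA(w)\le\frac{c}{\pi}\Big(\sup_m\|f_m\|_p\Big)\,r(z),\]
and for each fixed $z$ dominated convergence in $w$ gives $h_n(z)\to0$; since $h_n$ is dominated by the fixed $L^p$ function on the right, dominated convergence in $z$ yields $\|h_n\|_{L^p}\to0$, and as $|(T^{(s,t)}f_n)(z)|e^{-|z|^2/2}\le h_n(z)$ we conclude $\|T^{(s,t)}f_n\|_p\to0$. The case $p=\infty$ needs only a small modification: $h_n\le\mathrm{const}\cdot r$ controls $\sup_{|z|>\rho}h_n$ for large $\rho$ through the decay of $r$, while on each disk $\{|z|\le\rho\}$ one splits the $w$-integral and uses the uniform convergence $g_n\to0$ on compacta. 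I expect the main obstacle to be the quasi-Banach range $0<p<1$ for the boundedness half: there the Schur test and duality are unavailable, so one must set up the atomic decomposition carefully, control the lattice sum $\sum_k\|T^{(s,t)}k_{a_k}\|_p^p$ through the Gaussian decay of $e^{-|a_k|^2/2}\|K_{a_k}^{(s,t)}\|_p$, and justify the passage from the dense span of $\{k_{a_k}\}$ to all of $F^p$.
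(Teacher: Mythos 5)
Your proposal is correct, and its skeleton coincides with the paper's for most of the theorem: the necessity of $|s|^2\ge|t|^2+1$ (and of strict inequality for compactness) is obtained exactly as in the paper, by testing on the normalized kernels $k_w$ via \eqref{eqTKw}, Proposition~\ref{propKp} and Corollary~\ref{corKinftymax}; the sufficiency for boundedness when $1\le p\le\infty$ is the Schur test with the two constants of Corollary~\ref{corsup}, which is the same computation the paper runs through H\"older's inequality and Fubini's theorem; and for $0<p<1$ you use the same atomic decomposition with the same Gaussian bound \eqref{eqTkpless} (your ``extend by continuity'' step hides the same interchange of $T^{(s,t)}$ with the lattice series that the paper dismisses as ``a standard approximation argument''; the honest justification is pointwise convergence of the partial sums under the integral, using $F^p\subset F^\infty$). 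Where you genuinely depart from the paper is the sufficiency of compactness. The paper splits into three cases: a truncation argument over $D_r$ and $\C\setminus D_r$ for $1\le p<\infty$, a separate argument for $p=\infty$ resting on Corollary~\ref{corSKinftymax}, and a third argument for $0<p<1$ via \eqref{eqTkpless}. You instead observe that $r(z)=\inc|\widetilde K(z,w)|\,dA(w)$ has Gaussian decay (by \eqref{eq3} and the bound \eqref{IeqIK} applied to $(\overline s,-t)$), dominate $|T^{(s,t)}f_n(z)|e^{-|z|^2/2}$ by $h_n(z)\le Cr(z)$ using the uniform pointwise bound $|f_n(w)|e^{-|w|^2/2}\le c\,\sup_m\|f_m\|_p$, and conclude by two applications of dominated convergence; since $r^p\in L^1$ for every $p$, this single argument covers all $0<p<\infty$ at once, including the quasi-Banach range, with no truncation and no atomic decomposition. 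This is a real simplification over the paper. The only place your sketch is thinner than the paper is $p=\infty$: your tail estimate requires $\sup_{|z|\le\rho}\int_{\C\setminus D_R}|\widetilde K(z,w)|\,dA(w)\to0$ as $R\to\infty$, which does not follow from the decay of $r$ alone; you need joint Gaussian decay of the kernel, e.g.\ from Theorem~\ref{thmTKzw} together with $|s|>1$ (or the paper's Corollary~\ref{corSKinftymax}, which packages exactly this information). Also, your opening claim that boundedness of $T^{(s,t)}$ on $F^p$ is \emph{equivalent} to boundedness of $S$ on all of $L^p(\C,dA)$ is overstated (only the sufficient direction is true in general), but this is harmless since you never use the converse.
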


\begin{proof}
To prove (a), we first assume $|s|^2<|t|^2+1$. Recall that the normalized
reproducing kernels $k_w$ are unit vectors not only in $F^2$ but also in
$F^p$ for $0<p\le\infty$. For $0<p<\infty$, it follows from \eqref{eqTKw} 
and Proposition~\ref{propKp} that
\begin{align*}
\left\|T^{(s,t)}k_w\right\|_p&=\exp\left[-\frac{1}{2}|w|^2\right]
\left\|K^{(s,t)}_w\right\|_p\\[8pt]
&=\frac{|s|^{(1/p-1/2)}}{(|s|^2-|t|^2)^{1/(2p)}}\exp\left[
\frac{|t|^2+1-|s|^2}{2(|s|^2-|t|^2)}|{w}|^2\right]\left|\exp\left
[\frac{t(|t|^2+1-|s|^2)}{2\overline{s}(|s|^2-|t|^2)}{w^2}\right]\right|\\[8pt]
&\geq \frac{|s|^{(1/p-1/2)}}{(|s|^2-|t|^2)^{1/(2p)}}\exp
\left[\frac{1+|t|^2-|s|^2}{2|s|(|s|+|t|)}|w|^2\right]\rightarrow\infty
\end{align*}
as $|w|\rightarrow\infty$, so $T^{(s,t)}$ is unbounded on $F^p$.
%which contradicts with \eqref{eqTbles1}.
When $p=\infty$, we let $\{w_n\}$ be a sequence of nonzero complex
numbers such that $stw_n^2=|stw_n^2|$ and such that the sequence
$\{|w_n|\}$ is nondecreasing and tends to $\infty$. It follows from
\eqref{eqTKw} and Corollary~\ref{corKinftymax} that
$$\|T^{(s,t)}k_{w_n}\|_\infty\geq\frac{1}{\sqrt{|s|}}\exp
\left[\frac{1+|t|^2-|s|^2}{2|s|(|s|-|t|)}|w_n|^2\right]\rightarrow\infty$$
as $n\rightarrow\infty$, so $T^{(s,t)}$ is unbounded on $F^\infty$.

Next we assume $|s|^2\ge|t|^2+1$. For $1< p< \infty$ we let
$1/p+1/q=1$ and let $c_1$, $c_2$ be the constants defined in
Corollary~\ref{corsup}. It is clear from \eqref{eq3} and the proof of
Corollary~\ref{corsup} that
$$\sup_{z\in\C}\inc\left|K^{(s,t)}(z,w)\right|
e^{-\frac{|z|^2+|w|^2}{2}}dA(w)=c_2.$$
By H\"{o}lder's inequality and Proposition~\ref{propsup}, we have
\begin{align*}
&\left|T^{(s,t)}f(z)\right|\\
&\leq\frac{1}{\pi}\inc\left|K^{(s,t)}(z,w)\right||f(w)|e^{-|w|^2}dA(w)\\
&\leq\frac{1}{\pi}\left[\inc\left|K^{(s,t)}(z,w)\right|
e^{-\frac{|w|^2}{2}}dA(w)\right]^{\frac{1}{q}}
\left[\inc\left|K^{(s,t)}(z,w)\right|e^{-\frac{(p+1)|w|^2}{2}}
|f(w)|^pdA(w)\right]^{\frac{1}{p}}\\
&\leq\frac{1}{\pi}\left(c_2 \,e^{\frac{|z|^2}{2}}\right)^{\frac{1}{q}}
\left[\inc\left|K^{(s,t)}(z,w)\right|e^{-\frac{(p+1)|w|^2}{2}}|f(w)|^p
dA(w)\right]^{\frac{1}{p}}
\end{align*}
for any $f\in F^p$. Using Proposition~\ref{propsup} again, we get
\begin{align*}
\|T^{(s, t)}f\|_p^p&=\frac{p}{2\pi}\inc\left|T^{(s,t)}f(z)\right|^p
e^{-\frac{p|z|^2}{2}}dA(z)\\[8pt]
&\leq\left(\frac{1}{\pi}\right)^pc_2^{p/q}\frac{p}{2\pi}
\inc|f(w)|^pe^{-\frac{p|w|^2}{2}}dA(w)
\inc \left|K^{(s,t)}(z,w)\right|e^{-\frac{|z|^2+|w|^2}{2}}dA(z)\\[8pt]
&\leq \left(\frac{1}{\pi}\right)^pc_2^{p/q}c_1\|f\|_p^p,
\end{align*}
which shows that $T^{(s, t)}$ is bounded on $F^p$ with
$$ \|T^{(s, t)}\|\leq \frac{c_1^{1/p}c_2^{1/q}}{\pi}
\leq \frac{2\sqrt{|s|}}{\sqrt{|s|^2-|t|^2}}.$$

When $p=1$ or $p=\infty$, it follows immediately from Fubini's
theorem and Proposition~\ref{propsup} that
$$\|T^{(s, t)}f\|_1\leq \frac{1}{\pi}\|f\|_{1}\sup_{w\in\C}
\inc\left|K^{(s,t)}(z,w)\right|e^{-\frac{|z|^2+|w|^2}{2}}dA(z)
\leq \frac{2\sqrt{|s|}}{\sqrt{|s|^2-|t|^2}}\|f\|_1,$$
and
$$\|T^{(s, t)}f\|_{\infty}\leq\frac{1}{\pi}\|f\|_{\infty}
\sup_{z\in\C}\inc\left|K^{({s},t)}(z,w)\right|
e^{-\frac{|z|^2+|w|^2}{2}}dA(w)
\leq \frac{2\sqrt{|s|}}{\sqrt{|s|^2-|t|^2}}\|f\|_{\infty},$$
which shows that $T^{(s,t)}$ is bounded on $F^1$ and $F^\infty$.

When $0<p<1$, we use Theorem 2.34 of \cite{Zhu1} to represent
$f\in F^p$ as follows:
$$f(z)=\sum_{w\in r\mathbb{Z}^2} c_w k_w(z),$$
where $r\mathbb{Z}^2$ is a square lattice in the complex plane for
sufficiently small $r>0$, $\{c_w : w\in r\mathbb{Z}^2\}\in l^p$,
the series converges in the ``norm topology'' of $F^p$, and
$$\sum_{w\in r\mathbb{Z}^2} |c_w|^p\leq C \|f\|_p^p$$
for some positive constant $C$ that is independent of $f$. By a
standard approximation argument, we have
$$T^{(s,t)}f(z)=\sum_{w\in r\mathbb{Z}^2} c_w T^{(s,t)}k_w(z).$$
Since $|s|^2\ge|t|^2+1$, a straightforward calculation using
\eqref{eqTKw} and Proposition~\ref{propKp} shows that
\begin{equation}\label{eqTkpless}
\left\|T^{(s,t)}k_w\right\|_p^p\leq \frac{|s|^{(2-p)/2}}{\sqrt{|s|^2-|t|^2}}
\exp\left[-\frac{p(|s|^2-|t|^2-1)}{2|s|(|s|+|t|)}|{w}|^2\right]
\end{equation}
for all $w\in\C$.
By H\"{o}lder's inequality, we have
$$\left\|T^{(s,t)}f\right\|_p^p\leq \sum_{w\in r\mathbb{Z}^2}|c_w|^p
\left\|T^{(s,t)}k_w\right\|_p^p
\leq \frac{C|s|^{(2-p)/2}}{\sqrt{|s|^2-|t|^2}}\|f\|_p^p.$$
This proves that $T^{(s,t)}$ is also bounded on $F^p$ when $0<p<1$.
%See Theorem 2.4 of \cite{LZZ} as well.
This completes the proof of part (a).

To prove (b) we only need to consider two cases: $|s|^2>|t|^2+1$ and
$|s|^2=|t|^2+1$.

We first consider the case $|s|^2>|t|^2+1$. Let $\{f_n\}$ be a
sequence in $F^p$ such that $\sup\limits_n\{\|f_n\|_{p}\}<\infty$
and $f_n\rightarrow 0$ uniformly on the compact sets in $\C$.
When $1\leq p<\infty$, by the proof in part (a), we get
\begin{align*}
\|T^{(s, t)}f_n\|_p^p &=\frac{p}{2\pi}\inc \left|T^{(s,t)}f_n(z)
\right|^pe^{-\frac{p|z|^2}{2}}dA(z)\\
&\leq C\,\inc|f_n(w)|^pe^{-\frac{p|w|^2}{2}}dA(w)
\inc\left|K^{(s,t)}(z,w)\right|e^{-\frac{|z|^2+|w|^2}{2}}dA(z)
\end{align*}
for some constant $C$. Let
$$D_r=\{w\in\C: |w|\leq r\},\qquad 0<r<\infty.$$
Then
\begin{align*}
\|T^{(s, t)}f_n\|_p^p&\leq C\,\int_{\C\setminus D_r}|f_n(w)|^p
e^{-\frac{p|w|^2}{2}}dA(w)\inc\left|K^{(s,t)}(z,w)\right|
e^{-\frac{|z|^2+|w|^2}{2}}dA(z)\\
&\qquad+C\int_{D_r} |f_n(w)|^pe^{-\frac{p|w|^2}{2}}dA(w)
\inc \left|K^{(s,t)}(z,w)\right|e^{-\frac{|z|^2+|w|^2}{2}}dA(z)\\
&= I_1+I_2.
\end{align*}
Since $\sup\limits_n\{\|f_n\|_{p}\}<\infty$, it follows from
Proposition~\ref{propsup} that
\begin{align*}
I_1&= C\int_{\C\setminus D_r} |f_n(w)|^pe^{-\frac{p|w|^2}{2}}dA(w)
\inc \left|K^{(s,t)}(z,w)\right|e^{-\frac{|z|^2+|w|^2}{2}}dA(z)\\
&\leq \frac{2\pi C}{p}\,\|f_n\|_{p}^p\sup_{w\in \C\setminus D_r}
\inc\left|K^{(s,t)}(z,w)\right|e^{-\frac{|z|^2+|w|^2}{2}}dA(z)\rightarrow 0
\end{align*}
uniformly in $n$ as $r\rightarrow\infty$. Also, since $f_n\rightarrow 0$
uniformly on compact sets in $\C$, we have
\begin{align*}
I_2&=C\int_{D_r}|f_n(w)|^pe^{-\frac{p|w|^2}{2}}dA(w)
\inc\left|K^{(s,t)}(z,w)\right|e^{-\frac{|z|^2+|w|^2}{2}}dA(z)\\
&\leq\frac{2\pi C}{p}\,\sup_{w\in D_r}|f_n(w)|^p\,\sup_{w\in \C}
\inc\left|K^{(s,t)}(z,w)\right|e^{-\frac{|z|^2+|w|^2}{2}}dA(z)\rightarrow 0
\end{align*}
as $n\rightarrow\infty$. So $\|T^{(s, t)}f_n\|_p\rightarrow 0$ as
$n\rightarrow\infty$ for any $1\leq p<\infty$.

For $p=\infty$, it is clear that
\begin{align*}
    \|T^{(s, t)}f_n\|_\infty&\leq \frac{1}{\pi}\,\sup_{z\in\C}
\left\{\inc |f_n(w)|e^{-\frac{|w|^2}{2}}\left|K^{({s},t)}(z,w)
\right|e^{-\frac{|z|^2+|w|^2}{2}}dA(w)\right\}\\
&\leq \frac{1}{\pi}\, \inc |f_n(w)|e^{-\frac{|w|^2}{2}} 
\left\|K^{(s,t)}_w\right\|_\infty e^{-\frac{|w|^2}{2}} dA(w).
\end{align*}
Using Corollary \ref{corSKinftymax}, we obtain
$$\lim_{|w|\rightarrow\infty}\ \left\|K^{(s,t)}_w\right\|_\infty 
e^{-\frac{|w|^2}{2}}=0.$$
Then a similar argument used above shows that
$\|T^{(s, t)}f_n\|_\infty\rightarrow 0$ as $n\rightarrow\infty$.
Thus $T^{(s,t)}$ is compact on $F^p$, $1\le p\le\infty$, when
$|s|^2>|t|^2+1$.

If $0<p<1$ and $|s|^2>|t|^2+1$, it follows from \eqref{eqTkpless} that
$$\lim_{|w|\rightarrow\infty}\left\|T^{(s,t)}k_w\right\|_p=0.$$
Using H\"{o}lder's inequality as in the proof of part (a) and then
proceeding as in the previous paragraph, we see that
$\|T^{(s, t)}f_n\|_p\rightarrow 0$ as $n\rightarrow\infty$.
%See Theorem 3.3 of \cite{LZZ} as well.
Thus $T^{(s,t)}$ is compact on $F^p$ for all $0<p\le\infty$ when
$|s|^2>|t|^2+1$.

Next we consider the case $|s|^2=|t|^2+1$. If $0< p<\infty$, it
follows from \eqref{eqTKw} and Proposition~\ref{propKp} that
$$\|T^{(s,t)}k_w\|_p=|s|^{(1/p-1/2)}\nrightarrow 0,
\qquad |w|\rightarrow\infty.$$
%which shows that $T^{(s,t)}$ is not compact on $F^p$, because
%$\|k_w\|_{F^p}=1$ and $k_w(z)\to0$ uniformly for $z$ in any
%compact subset of $\C$ as $w\to\infty$.
When $p=\infty$, we let $\{w_n\}$ be a sequence of nonzero
complex numbers such that $stw_n^2=|stw_n^2|$ and such that
the sequence $\{|w_n|\}$ is nondecreasing and tends to $\infty$.
Then it follows from Corollary~\ref{corKinftymax} that
$$\|T^{(s,t)}k_{w_n}\|_\infty\geq \frac{1}{\sqrt{|s|}}\nrightarrow 0,
\qquad n\rightarrow\infty.$$
Note that each $k_w$ is a unit vector in $F^p$ and $k_w(z)\to0$
(as $|w|\rightarrow\infty$) uniformly for $z$ in any compact subset
of $\C$ . Thus $T^{(s,t)}$ is not compact on $F^p$ for
any $0<p\le\infty$ when $|s|^2=|t|^2+1$. This proves part (b) and
completes the proof of the theorem.
\end{proof}

We note that Theorems 2.4 and 33 in \cite{LZZ} are related to our
results here in the case $0<p<1$.

\end{document}